\tikzset{
  mybox/.style={
    draw, very thick,
    minimum width=48mm,
    minimum height=16mm,
    align=center,
    font=\bfseries
  },
  myboxR/.style={
    draw, very thick,
    minimum width=55mm,
    minimum height=16mm,
    align=center,
    font=\bfseries
  },
  myoval/.style={
    draw, very thick,
    minimum width=40mm,
    minimum height=22mm,
    align=center,
    font=\bfseries
  },
  downarr/.style={-{Stealth[open,length=7mm,width=7mm]}, very thick},
  arr/.style={-{Stealth[open,length=6mm,width=6mm]}, very thick},
  topdash/.style={very thick, dash pattern=on 10mm off 8mm},
  vdot/.style={very thick, dotted}
}
\newcommand{\heart}{\ensuremath\heartsuit}
\newcommand{\footremember}[2]{%
    \footnote{#2}
    \newcounter{#1}
    \setcounter{#1}{\value{footnote}}%
}
\def\A{{\mathcal A}}
\newcommand{\proofpart}[2]{%
  \par
  \addvspace{\medskipamount}%
  \noindent\emph{Step #1: #2}\par\nobreak
  \addvspace{\smallskipamount}%
  \@afterheading
}
\newtheorem{definition}{Definition}
\newtheorem{theorem}{Theorem}
\newtheorem{corollary}{Corollary}
\newtheorem{proposition}{Proposition}
\newtheorem{lemma}{Lemma}
\newtheorem{notation}{Notation}
\newtheorem{remark}{Remark}
\newtheorem{assumption}{Assumption}
\date{}
\title{Approximation of Discrete-Time Infinite-Horizon Mean-Field
Equilibria via Finite-Horizon Mean-Field Equilibria\thanks{Research of first and second authors was supported in part by the AFOSR Grant FA9550-24-1-0152}}
\author{%
  U\u{g}ur Ayd{\i}n\footremember{ua}{Department of Electrical and Computer Engineering, University of Illinois Urbana-Champaign, Urbana, IL, 61801, USA, uaydin2@illinois.edu}%
  \and Tamer Ba\c{s}ar\footremember{tb}{Department of Electrical and Computer Engineering, University of Illinois Urbana-Champaign, Urbana, IL, 61801, USA, basar1@illinois.edu}%
  \and Naci Saldi\footremember{ns}{Department of Mathematics, Bilkent University, \c{C}ankaya, Ankara, 06800, TURKEY, naci.saldi@bilkent.edu.tr}%
}
\begin{document}
\maketitle
\begin{abstract}
We address in this paper a fundamental question that arises in mean-field games (MFGs), namely whether mean-field equilibria (MFE)\footnotemark  for discrete-time finite-horizon MFGs can be used to obtain approximate stationary as well as non-stationary MFE for similarly structured infinite-horizon MFGs. We provide a rigorous analysis of this relationship, and show that any accumulation point of MFE of a discounted finite-horizon MFG constitutes, under weak convergence as the time horizon goes to infinity, a non-stationary MFE for the corresponding infinite-horizon MFG. Further, under certain conditions, these non-stationary MFE converge to a stationary MFE, establishing the appealing result that finite-horizon MFE can serve as approximations for stationary MFE. Additionally, we establish improved contraction rates for iterative methods used to compute regularized MFE in finite-horizon settings, extending existing results in the literature. As a byproduct, we obtain that when two MFGs have finite-horizon MFE that are close to each other, the corresponding stationary MFE are also close. As one application of the theoretical results, we show that finite-horizon MFGs can facilitate learning-based approaches to approximate infinite-horizon MFE when system components are unknown. Under further assumptions on the Lipschitz coefficients of the regularized system components (which are stronger than contractivity of finite-horizon MFGs), we obtain exponentially decaying finite-time error bounds-- in the time horizon--between finite-horizon non-stationary, infinite-horizon non-stationary, and stationary MFE. {Contraction conditions for the infinite-horizon non-stationary MFG over the space of bounded sequences are shown to be equivalent to that of stationary MFGs.} As a byproduct of our error bounds, we present a new uniqueness criterion for infinite-horizon nonstationary MFE beyond the available contraction results in the literature for stationary MFGs (and hence the contraction result we find for the infinite-horizon non-stationary MFGs).
\end{abstract}
\section{Introduction}
\footnotetext{When we use the acronym ``MFE'',``E'' stands for both singular (equilibrium) and plural (equilibria) versions, with the precise attribution to be clear from context.}

This work investigates the relationship between finite-horizon mean-field equilibria (MFE) in discounted finite-horizon mean-field games (MFGs) and infinite-horizon MFE, encompassing both stationary and non-stationary scenarios. In learning theory, when system components are unknown, Bayesian methods often employ finite-horizon models to construct sample priors that approximate the true system parameters \cite{huang2024statistical, pasztor2023efficient}. Similarly, adaptive learning techniques utilize finite-horizon MFGs to estimate and learn these true parameters \cite{ramponi2024imitation, light2025computing}. However, evaluations of finite-horizon models within the mean-field framework typically neglect infinite-horizon benchmarks \cite[Definition 7]{ramponi2024imitation}, \cite[Eq. (4)]{huang2024statistical}. Moreover, in the learning framework, the non-stationary case has so far been studied only for finite-horizon MFGs; see \cite{lauriere2022learning}. Therefore, approximating an infinite-horizon non-stationary MFE by finite-horizon non-stationary MFEs allows these results to be extended to the infinite-horizon setting. The infinite-horizon non-stationary case is typically more challenging than both the finite-horizon and stationary settings, due to the infinite forward--backward recursion linking the $Q$-functions and the state measures. This oversight prompts the crucial question of whether methods developed for finite-horizon scenarios are capable of accurately approximating infinite-horizon equilibria:
\begin{itemize}
\item[Q)] Can finite-horizon MFE effectively approximate infinite-horizon MFE (both stationary and non-stationary)?
\end{itemize}
If the answer to this question is affirmative, and if infinite-horizon equilibria are unique, then models derived using finite-horizon MFGs can also provide approximate MFE for infinite-horizon MFE obtained under the targeted system components.

To address this problem, we will prove that any accumulation point of finite-horizon MFE, under weak convergence as the time horizon increases, constitutes a non-stationary infinite-horizon MFE. We recall that a discrete-time non-stationary infinite-horizon MFE is characterized by optimal policies $(\pi_t)_{t \in \mathbb N}$ and state-measures $(\mu_t)_{t \in \mathbb N}$ obtained from an infinite sequence of $Q$-functions $(Q_t)_{t\in \mathbb N}$ such that the forward-backward recursions $Q_t=H_1(Q_{t+1},\mu_t)$ and $\mu_{t+1}=H_2(\pi_t,\mu_t)$ are satisfied for some time independent maps $H_1$ and $H_2$ \cite[Remark 7]{SaBaRaSIAM}. It follows that optimal policy $\pi_t$ at time $t$ depends on the entire trajectory of $Q$-functions $(Q_{j})_{j \ge t}$ and state-measures $(\mu_j)_{j \le t}$, which requires solving infinitely many equations at once to obtain an infinite-horizon non-stationary MFE, which is intractable in practice. Given that directly solving dynamic programming problems in the infinite-horizon context is typically intractable, our result not only advances the theoretical understanding of Bayesian learning within MFGs but also highlights a practical method for approximating infinite-horizon nonstationary MFE. Specifically, our result shows that finite-horizon MFE can effectively serve as approximations, thus providing a viable and computationally manageable strategy for both learning and approximating infinite-horizon non-stationary MFE. 

Finite-horizon MFE depend on both the length of the time horizon and the time parameter, which is bounded by the horizon length. Conversely, stationary MFE are independent of the length of the time horizon but require an ergodicity condition on the state-measure component. Consequently, approximating stationary MFE using finite-horizon MFE necessitates extending both the time horizon and the time parameter simultaneously, posing significant challenges. To overcome this obstacle, we will utilize infinite-horizon non-stationary MFE as intermediate terms. By investigating conditions under which infinite-horizon non-stationary MFE converge to stationary MFE, we establish a pathway for approximation. Provided that finite-horizon MFE converge first to infinite-horizon non-stationary MFE, which in turn converge to stationary MFE, we demonstrate that finite-horizon MFE can effectively approximate stationary MFE, which will partially answer the question above regarding the relation between stationary MFE and finite-horizon MFE.

To demonstrate the effectiveness of finite-horizon MFGs in learning compared to the infinite-horizon setting, we will provide improved contraction rates for the so-called mean-field equilibrium operators used for iterative methods to learn {finite-horizon} MFE under contraction \cite{anahtarci2020value}. {We show that the contraction condition for these iterative algorithms for infinite-horizon non-stationary MFGs over the space of bounded sequences, equipped with the uniform norm, coincides with the contraction condition for the stationary MFGs obtained in \cite{anahtarci2023q}. In particular, we demonstrate that a finite-horizon MFG may be contractive for every horizon length $T<\infty$, while the corresponding infinite-horizon (non-stationary and stationary) MFGs may fail to be contractive. Furthermore, under conditions weaker than the contraction condition for infinite-horizon MFGs, we show that there exists a unique infinite-horizon non-stationary MFG for any given initial state measure, and that the finite-horizon MFEs obtained under the same initial state measure converge to this infinite-horizon MFE as the horizon length $T \to \infty$.
}

We will provide experimental results as well as theoretical bounds showing that improved contractivity holds for the finite-horizon setting when the known results in the literature for the infinite-horizon setting fail.
We will mainly focus our contractivity results on the regularized setting developed in \cite{anahtarci2023q}, but we also expect our results to hold in other contractivity settings used in the literature \cite{cui2021approximately, guo2019learning} (provided that the state space is compact).

\subsection{Literature Review}
\subsubsection{Iterative Methods for Finding Equilibria of Discrete-time Mean-field Games}
In infinite-horizon stationary MFGs, most iterative methods for finding MFE focus on two approaches: contractive methods and monotonicity. The monotonicity condition for MFGs allows us to show that there exists a unique MFE for the system without any further restrictions on the iterations, such as small Lipschitz coefficients \cite{zhang2023learning}. In contrast, the contractive method requires small Lipschitz coefficients for the system components as well as access to a Lipschitz continuous optimal policy \cite{guo2019learning,subramanian2019reinforcement,anahtarci2020value}. In the current literature, the most common way to satisfy these restrictions is found in finite state and action space settings with Lipschitz continuous system components, where one perturbs the system objective with a \emph{regularizer} \cite{anahtarci2023q,cui2021approximately}, which causes a deviation from the true equilibria to obtain a Lipschitz continuous optimal policy. Despite the available prior work on stationary and finite-horizon non-stationary MFE, there is no fixed-point iteration algorithm studied in the case of infinite-horizon non-stationary MFE \cite{lauriere2022learning}.
\subsubsection{Function Approximation in Mean-Field setting}
Bayesian methods often utilize function approximations to choose a model from a given set of functions. In the mean-field setting, there is currently limited literature available regarding the use of function approximations. The work \cite{pasztor2023efficient} used reproducing kernel Hilbert spaces to perform function approximation over upper confidence intervals for finite-horizon mean-field control with near-deterministic transition functions and sub-Gaussian noises in general state and action spaces. The work \cite{huang2024statistical} provided sample complexity results for MFG and mean-field control settings under function approximation when the state and action spaces are finite in the finite-horizon setting. The work \cite{anahtarci2023learning} used function approximations for infinite-horizon MFGs in a model-free setting under regularization with finite states and actions. Recently, adaptive learning methods have also started to gain atraction in the MFG setting \cite{light2025computing}, \cite{ramponi2024imitation}. 
\subsubsection{Robustness of MFGs}
Approximation of infinite-horizon MFE with finite-horizon MFE can be considered as a form of robustness of the system with respect to the time horizon. Although there is no other work available in the literature regarding the robustness of MFE with respect to the time-horizon, several robustness results have appeared recently in the mean-field setting. The stability of Stackelberg MFGs has been studied in \cite{guo2022optimization}. For general MFGs, the robustness of MFE has been studied in \cite{aydin2023robustness} under model uncertainties. The proof of convergence from finite-horizon MFEs to infinite-horizon nonstationary MFEs in our work relies on techniques similar to those introduced in \cite{aydin2023robustness}. In continuous time, in \cite{bauso2016robust}, MFGs that incorporate uncertainty in both states and payoffs have been investigated. In \cite{moon2016linear}, the authors consider linear-quadratic risk-sensitive and robust mean-field games. For MDPs, the robustness of the value function has been studied in \cite{baker2016continuity} \cite{kara2019robustness}. Although convergence of policies is not considered in \cite{kara2019robustness}, they utilize the ``continuous convergence'' of the system components (see Assumption \ref{assump:2}), which also plays an important role in our work. 

\subsection{Contributions}

This work presents both qualitative and quantitative results regarding approximation of an infinite-horizon MFE by a finite-horizon MFE in the discrete-time setting.

\begin{enumerate}
    \item In Section \ref{sect:2}, we will review the discrete-time MFGs in the framework setting introduced in \cite{SaBaRaSIAM}. Since we are interested in the relations of MFE obtained in each of the finite-horizon, infinite-horizon non-stationary, and stationary settings, we provide a short description of each setting.
    \item In Section \ref{sect:fixpt}, we will present our quantitative approximation results. For this purpose, we will first study the fixed-point iteration for mean-field games under regularization in finite state and action spaces. We mainly base our analysis on the framework introduced in \cite{anahtarci2023q} by introducing a regularizer. {We further show that the contraction condition for infinite-horizon non-stationary MFGs over the space of bounded sequences coincides with that of stationary MFGs. To obtain these contraction conditions,} we analyze the fixed-point iteration by means of vector inequalities (Theorem \ref{thrm:f2}). It will be shown that the contraction condition in the finite-horizon setting is weaker than the one in infinite-horizon {stationary} setting \cite[Proposition 1]{anahtarci2023q} (Theorem \ref{prop:2} and Theorem \ref{thrm:a1}).
    
    We expect our techniques to be applicable to several other settings as well, such as the Boltzmann setting introduced in \cite{cui2021approximately}, and the nonregularized convex setting studied in \cite{anahtarci2020value}. We also believe that our techniques can yield sharper convergence-rate guarantees for algorithms on finite-horizon MFGs. 
    
    Finally, under assumptions stronger than contractivity {of finite-horizon MFGs (but weaker than the contraction of infinite-horizon MFGs)}, we establish error bounds between finite-horizon and infinite-horizon MFE (Theorem \ref{thrm:7}). These bounds, in turn, allow us to derive a new uniqueness result for infinite-horizon non-stationary MFE under discounted cost.
    \item In Section \ref{sect:approx}, we will present our qualitative approximation results under more general settings than those in Section \ref{sect:fixpt}. For this purpose, we will study the relationship between the finite-horizon discounted cost MFE and the infinite-horizon non-stationary and stationary discounted cost MFE under compact state and action spaces. As mentioned earlier, this will be done by showing the asymptotic convergence of finite-horizon MFE to infinite-horizon non-stationary MFE, without any explicit error bounds (Theorem \ref{thrm:1}). Since the tail of non-stationary MFE can be oscillatory in nature, by analyzing cases in which a non-stationary MFE has a stationary MFE as an accumulation point, we prove that finite-horizon discounted cost MFE can be used to approximate stationary MFE (Proposition \ref{prop:5}).

\end{enumerate}
\begin{figure}[h]
\centering
\resizebox{\linewidth}{!}{%
\begin{tikzpicture}

\node[font=\bfseries, align=center] (qt) at (-6,5.0)
{Quantitative Results\\(Finite state and action spaces)};
\node[font=\bfseries, align=center] (ql) at (6,5.0)
{Qualitative Results\\(Compact Polish state space and compact convex action space)};

\draw[topdash] (-10,4.55) -- (-2,4.55);
\draw[topdash] (  2,4.55) -- (10,4.55);

\draw[vdot] (0,5.2) -- (0,-4.4);

\node[mybox] (L1) at (-7,2.4) {Theorem \ref{thrm:main}:\\Explicit contraction condition\\for finite-horizon MFE.};
\node[mybox, below=18mm of L1] (L2) {Theorem \ref{thrm:7}:\\Finite-time error\\bound between finite\\horizon MFE and infinite\\horizon non-stationary MFE.};
\node[mybox, below=18mm of L2] (L3) {Theorem \ref{thrm:s2}:\\Finite-time error\\bound between infinite\\horizon non-stationary MFE\\and stationary MFE under\\contraction of stationary MFE.};

\draw[downarr,dashed] (L1.south) -- (L2.north);

\node[myoval] (O1) at (0,2.4) {Convergence of\\finite-horizon MFE\\to infinite horizon\\non-stationary MFE.};
\node[myoval, below=20mm of O1] (O2) {Convergence of\\infinite-horizon\\ non-stationary MFE\\to stationary MFE.};

\node[myboxR] (R1) at (7,2.4) {Theorem \ref{thrm:1}:\\Accumulation points of\\finite-horizon MFE\\are infinite-horizon\\non-stationary MFE.};
\node[myboxR, below=26mm of R1] (R2) {Theorem \ref{thrm:q1}:\\If there exists a unique\\policy that corresponds to\\state-measures then\\the tail of infinite-horizon\\non-stationary MFE\\converges to stationary MFE\\if and only if state-measures\\ are convergent.};

\draw[arr, dashed] (L2.east) to[bend left=18]
  node[midway, above, sloped, align=center, font=\bfseries]
  {convergence rate\\available}
  (O1.west);

\draw[arr, dashed] (L3.east) to[bend left=12]
  node[midway, above, sloped, align=center, font=\bfseries]{convergence rate\\available}
  (O2.west);

\draw[arr] (R1.west) to[bend left=12]
  node[midway, above, sloped, font=\bfseries]{asymptotic}
  (O1.east);

\draw[arr, dashed] (R2.west) to[bend left=10]
  node[midway, above, sloped, font=\bfseries]{asymptotic}
  (O2.east);
\end{tikzpicture}
}
\caption{Informal statements of the main results of this work. The dashed lines indicate that the corresponding earlier result provides only a sufficient condition for the stated convergence. In contrast, Theorems \ref{thrm:1} and \ref{thrm:q1} do not require a contraction assumption and are therefore applicable in a broader setting than Theorems \ref{thrm:7} and \ref{thrm:s2}. By combining the convergence of finite-horizon MFEs to infinite-horizon non-stationary MFEs with the convergence of infinite-horizon non-stationary MFEs to stationary MFEs, one also obtains an approximation scheme for stationary MFEs via finite-horizon MFEs.}
\end{figure}
\section{Preliminaries}\label{sect:2}
In this work, we will investigate the relationship between finite-horizon, infinite-horizon non-stationary, and stationary MFGs under discounted cost in discrete-time by means of their MFE. We will adopt the setting introduced in \cite{SaBaRaSIAM}.
\subsection{Finite-Horizon MFGs with Discounted Cost}
We denote a finite-horizon MFG with the tuple $(X,A,c,p,\mu_0,T)$, which we will often denote by MFG$_{\textrm {T}}$, where 
\begin{itemize}
    \item $X$ is a Polish state space,
    \item $A$ is a Polish action space,
    \item $c:X \times A \times \mathcal P(X) \to \mathbb R$ is the one-stage cost function and $p: X \times A \times \mathcal P(X) \to \mathcal P(X)$ denotes the transition probability of the next state given a state-action pair and a state-measure, where $\mathcal P(X)$ is the space of probability measures over the state space $X$,
    \item $\mu_0\in \mathcal P(X)$ is a given initial state-measure,
    \item $T$ represents the length of the horizon of the MFG.
\end{itemize}

To relate finite-horizon equilibria to infinite-horizon equilibria, we will need the discounted cost structure, and thus we will assume that all of our MFG$_{\mathrm T}$ are under discounted cost throughout the paper without mentioning it explicitly. With this convention, in MFG$_{\mathrm T}$, the components of the tuple represent a single-player who seeks to find a flow of policies, $\pmb \pi^T =(\pi^T_t)_{t=0}^T$, $\pi^T_t: X \to \mathcal P(A),$ that minimizes the discounted objective function under a fixed discount factor $0\leq \beta <1$:
\[
J(\pmb \pi^T) = E^{\pmb { \pi}^T}\left[ \sum_{t=0}^{T-1}\beta^tc(x_t,a_t,\mu^T_t)\right],
\]
where $\Pi$ is the space of Markov policies \cite[Proposition 3.2]{SaBaRaSIAM}. The flow $\pmb \mu^{\pmb T} = (\mu^T_t)_{t=0}^{T-1} \in \prod_{t=0}^{T-1} \mathcal P(X) =: \mathcal P(X)^{T}$ satisfies
\[
\mu^T_{t+1}(\cdot) = \int_X p(\cdot|x,a,\mu^T_t)\pi^T_t(da|x)\mu^T_t(dx).
\]
In this model, the evolutions of the states and actions are given by
\[
x(0) \sim \mu_0, \quad x(t) \sim p(\cdot | x(t-1), a(t-1), \mu^T_t), \quad t \geq 1, \quad a(t) \sim \pi^T_t(\cdot | x(t)), \quad t \geq 0.
\] A pair $(\pmb \pi^{\pmb T}, \pmb \mu^{\pmb T})=(\pi^T_t,\mu^T_t)_{t=0}^{T-1}$ that satisfies these properties is referred to as a \emph{mean-field equilibrium} of MFG$_{\mathrm T}$. We will also refer to $(\pmb \pi^{\pmb T},\pmb \mu^{\pmb T})$ as a MFE flow. For the most part, we will be interested in the convergence of the families of joint probability measures $(\pi^T_t \otimes \mu^T_t)_t$, where $\pi^T_t\otimes \mu^T_t(da,dx) := \pi^T_t(da|x)\mu^T_t(dx)$ denotes the joint probability measure constructed using the pair $(\pi^T_t,\mu^T_t)$. In the case of a MFE $(\pmb \pi^{\pmb T}, \pmb \mu^{\pmb T})$, we have $\pmb \pi^{\pmb T} \otimes \pmb \mu^{\pmb T} := (\pi^T_t \otimes \mu^T_t)_{t=0}^{T-1}.$ Clearly, the disintegration of the flow $\pmb \pi^{\pmb T} \otimes \pmb \mu^{\pmb T}$ provides a MFE for MFG$_{\mathrm T}$. If a pair $(\pi^T_t,\mu^T_t)_{t=0}^{T-1}$ induces a MFE, we will call such a pair a \emph{MFE flow}. We will often denote a MFE flow obtained from MFG$_{\mathrm T}$ as $\pmb {\pi^T} \otimes \pmb{\mu^T}$ explicitly when there is potential confusion.
\subsection{Infinite-horizon non-stationary MFGs}
We represent an infinite-horizon non-stationary MFG with the tuple $(X,A,c,p,\mu_0)$, and denote it by MFG$_{\textrm {ns}}$ as a shorthand. The only difference between the infinite-horizon MFGs and those of finite-horizon in the non-stationary case is that we are mainly interested in countable flows $\pmb \pi = (\pi_t)_{t=0}^{\infty}$ and $\pmb \mu = (\mu_t)_{t=0}^{\infty}$ such that $\pmb \pi$ minimizes the objective function
\[
J(\pmb \pi) =E^{\pmb { \pi}}\left[ \sum_{t=0}^{\infty}\beta^tc(x_t,a_t,\mu_t)\right]
\]
and $\pmb \mu$ evolves according to
\begin{equation}\label{eq:stat-evolve}
\mu_{t+1}(\cdot) = \int_X p(\cdot|x,a,\mu_t)\pi_t(da|x)\mu_t(dx).
\end{equation}
Here, the pair $(x_t,a_t)$ admits a similar evolution as in the finite-horizon case.
\subsection{Stationary MFGs}
In the stationary setting, we are interested in time-independent evolutions. For this reason, the system description does not include an initial state-flow $\mu_0$ as the evolution of the population dynamics should be time-independent. Thus, the description will be given by the tuple $(X,A,c,p)$ instead. As a shorthand, we will refer to stationary MFGs as MFG$_{\textrm {s}}$. A mean-field equilibrium in the stationary case is a time-independent tuple $(\pi, \mu)$ such that $\pi$ minimizes the $\beta$-discounted cost
\[
J(\pi) = E^{\pi}\left[ \sum_{t=0}^{\infty}\beta^tc(x_t,a_t,\mu)\right]
\]
and $\mu$ is an invariant distribution
\[
\mu(\cdot) = \int_X p(\cdot|x,a,\mu)\pi(da|x)\mu(dx).
\]
Here, compared to the non-stationary case, we replace $\mu_t$ with $\mu$ in the dynamics and the one-stage cost function for all $t \in \mathbb N,$ which also includes the initial state-measure. In particular, if $(\pi,\mu)$ is a stationary MFE, then $\pmb \pi :=(\pi,\pi,\cdots)$ and $\pmb \mu = (\mu,\mu,\cdots)$ induces an infinite-horizon non-stationary MFE starting from the initial state-measure $\mu$.
\section{Fixed-Point Iteration for MFGs}\label{sect:fixpt}

In this section, we assume that \(X\) and \(A\) are finite. Our objective is to employ contraction arguments to derive quantitative approximation results relating finite-horizon and infinite-horizon MFE. As fixed-point iterations will serve as the main analytical prototype throughout this section, we begin by introducing a fixed-point iteration for \(\mathrm{MFG}_T\) under regularization. Regularization is standard in the recent literature on learning mean field games, as it ensures the Lipschitz continuity of policies, a property that is essential for the convergence analysis of fixed-point schemes \cite{guo2019learning,anahtarci2023q,cui2021approximately}. We also expect that the arguments developed here can be extended to the non-regularized framework considered in \cite{anahtarci2020value}.

We then establish a negative result showing that these techniques do not extend to the infinite-horizon setting in any immediate or straightforward manner. More precisely, we show that the fixed-point iteration associated with a finite-horizon mean field game may be contractive even when the corresponding fixed-point iteration in the infinite-horizon setting is not. In addition, we prove that fixed-point iterations depending only on \(Q\)-functions and state measures yield contraction conditions in the finite-horizon case that differ from those arising in the infinite-horizon setting.

Under additional assumptions guaranteeing contractivity in the infinite-horizon setting, we proceed to derive finite-time error bounds, comparing finite-horizon MFE, infinite-horizon non-stationary MFE, and stationary MFE under regularization. As a consequence of these error estimates, we obtain a new uniqueness condition for infinite-horizon non-stationary MFE corresponding to an arbitrary initial state measure.

\begin{enumerate}
    \item In Subsection \ref{sect:3.1}, we establish a qualitative contraction result for finite-horizon MFGs based exclusively on the iteration of the state measures. The argument relies on representing these state-measure iterations in matrix form. The resulting contraction criterion is then characterized in terms of the spectral radius of an explicitly constructed matrix $A_T$.
    \item In Subsection \ref{sect:b}, we use spectral analysis of the matrix $A_T$ to derive a quantitative contraction criterion that implies the qualitative contraction result established in Subsection \ref{sect:3.1}. Moreover, we show that this criterion is sharp, in the sense that the asymptotes of the spectral radius of $A_T$ converges to the established upper bound.

    \item In Subsection \ref{sect:3.2}, to motivate our quantitative approximation results, we show that the contraction conditions for infinite-horizon non-stationary MFEs and stationary MFEs coincide. {In particular, we will establish a contraction principle for infinite-horizon non-stationary MFG in the space of bounded sequences $(\ell_{\infty},\|\cdot\|_{\infty})$ with an explicit sharp contraction condition by using the spectral analysis of infinite-dimensional Toeplitz operators. It will be shown that this contraction condition (Theorem \ref{prop:2}) is the same one that is available for the stationary MFGs in the literature \cite{anahtarci2023q}, which is a strictly stronger condition than the contraction conditions that we will find for finite-horizon MFG in Subsection \ref{sect:b}.}
    \item In Subsection \ref{sect:3.5}, we derive finite-time error bounds between finite-horizon MFEs and infinite-horizon MFEs under regularization, building on the results of Subsection \ref{sect:b} and the structure of the left Perron eigenvectors of $A_T$ established in Subsection \ref{sect:b}. As a byproduct, we also obtain a uniqueness condition for infinite-horizon non-stationary MFEs that goes beyond the contraction criterion presented in Subsection \ref{sect:3.2}.
\end{enumerate}

\subsection{Fixed-Point Iteration for Finite-horizon Discounted Cost MFGs}\label{sect:3.1}

In this subsection, we develop the tools needed for our quantitative approximation results, closely following \cite{anahtarci2023q}. These results depend on the Lipschitz properties of the maps governing the updates of the $Q$-functions and state measures that characterize finite-horizon MFEs. To this end, we introduce the regularization procedure for discrete-time MFGs developed in \cite{SaBaRaSIAM} and establish the relevant Lipschitz properties needed to control the evolution of the $Q$-functions and state measures. In addition, we present a general qualitative contraction principle for finite-horizon MFEs that will play an important role in our approximation analysis. The sharp asymptotic behavior of this contraction condition requires spectral analysis and is deferred to the next subsection.

Let MFG$_{\mathrm T}$ be the finite-horizon MFG $(X,A,c,p,\mu_0,T).$
We recall that the total variation norm between two probability measures $\mu$ and $\nu$ over $X$ is defined as 
\[
\| \mu - \nu \|_{\mathrm{TV}} := \frac 12 \sum_{x \in X} | \mu(x) - \nu(x)| =: \frac 12 \| \mu - \nu \|_1.
\]Throughout this subsection (and the rest of Section \ref{sect:fixpt}), we will make the following Lipschitz continuity assumption on our system components of MFG$_{\mathrm T}$:
\begin{assumption}\label{assump:1}
    \begin{enumerate}[(a)]
    \item The one-stage reward function \( c \) satisfies the following Lipschitz bound:
    \[
    \left| c(x,a,\mu) - c(\hat{x},\hat{a},\hat{\mu}) \right|
    \leq L_1 \left( {1}_{\{x \neq \hat{x}\}} + 2 \, {1}_{\{a \neq \hat{a}\}} + 2 \, \|\mu - \hat{\mu}\|_{\mathrm{TV}} \right),
    \]
    for all \( x, \hat{x} \in X \), all \( a, \hat{a} \in A \), and all \( \mu, \hat{\mu} \in \mathcal P(X) \).

    \item The stochastic kernel \( p(\cdot | x,a,\mu) \) satisfies the following Lipschitz bound:
    \[
    \left\| p(\cdot | x,a,\mu) - p(\cdot | \hat{x},\hat{a},\hat{\mu}) \right\|_{\mathrm{TV}}
    \leq \frac{K_1}2 \left( 1_{\{x \neq \hat{x}\}} + 2 \, 1_{\{a \neq \hat{a}\}} + 2 \, \|\mu - \hat{\mu}\|_{\mathrm{TV}} \right),
    \]
    for all \( x, \hat{x} \in X \), all \( a, \hat{a} \in A \), and all \( \mu, \hat{\mu} \in \mathcal P(X) \).
\end{enumerate}
\end{assumption}

Since $c$ is continuous over $\mathcal P(X)$, it follows that $c$ is bounded by a constant, say $M$.
For $u \in \mathcal P(A)$, using the transformations,
\[
C(x,u,\mu) := \sum_{a \in A} c(x,a,\mu)u(a)
\]
and
\[
P(\cdot|x,u,\mu):= \sum_{a \in A} p(\cdot|x,a,\mu)u(a),
\]
we can transform the MFG$_{\textrm {T}}$ to $(X,\mathcal P(A),C,P,\mu_0,T)$, which is defined over a compact convex action space $\mathcal P(A)$ that is isomorphic to a closed convex subset of $\mathbb R^{|A|}$. The newly obtained system components $C$ and $P$ satisfy the following Lipschitz conditions \cite[Proposition 1]{anahtarci2023q}:
\begin{lemma}\label{lem:a}
    Under Assumption \ref{assump:1}, \( P \) and \( C \) satisfy the following Lipschitz bounds:

\[
|C(x, u, \mu) - C(\tilde{x}, \tilde{u}, \tilde{\mu})| \leq L_1
\left( 1_{\{x \neq \tilde{x}\}} + \|u - \tilde{u}\|_1 + 2 \, \|\mu - \tilde{\mu}\|_{\mathrm{TV}} \right),
\]

\[
\|P(\cdot|x, u, \mu) - P(\cdot|\tilde{x}, \tilde{u}, \tilde{\mu})\|_{\mathrm{TV}} \leq \frac{K_1}2
\left( 1_{\{x \neq \tilde{x}\}} + \|u - \tilde{u}\|_1 + 2 \, \|\mu - \hat{\mu}\|_{\mathrm{TV}} \right),
\]

for all \( x, \tilde{x} \in X \), \( u, \tilde{u} \in \mathcal P(A) \), and \( \mu, \tilde{\mu} \in \mathcal P(X) \).
\end{lemma}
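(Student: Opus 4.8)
The plan is to prove each of the two inequalities by a single optimal-coupling estimate. Since $C$ and $P$ are obtained by averaging $c$ and $p(\cdot\,|\cdots)$ against the action distribution $u$, I would fix a maximal coupling $\gamma\in\mathcal P(A\times A)$ of $u$ and $\tilde u$, i.e.\ a joint law with marginals $u,\tilde u$ for which $\sum_{a}\gamma(a,a)=\sum_a\min\{u(a),\tilde u(a)\}=1-\|u-\tilde u\|_{TV}$, so that its off-diagonal mass is $\sum_{a\neq a'}\gamma(a,a')=\|u-\tilde u\|_{TV}=\tfrac12\|u-\tilde u\|_1$. Then
\[
C(x,u,\mu)-C(\tilde x,\tilde u,\tilde\mu)=\sum_{a,a'}\bigl(c(x,a,\mu)-c(\tilde x,a',\tilde\mu)\bigr)\gamma(a,a'),
\]
and the identical identity holds for $P$ with $c$ replaced by the kernels $p(\cdot\,|\cdots)$ (the second identity uses that the total-variation norm is a norm on the space of signed measures).

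Next I would take absolute values (resp.\ total-variation norms), move them inside the sum by the triangle inequality, and apply Assumption~\ref{assump:1}(a) (resp.\ (b)) to each term $c(x,a,\mu)-c(\tilde x,a',\tilde\mu)$ (resp.\ $p(\cdot\,|x,a,\mu)-p(\cdot\,|\tilde x,a',\tilde\mu)$). Because $\gamma$ is a probability measure, the contributions of $\mathbf 1\{x\neq\tilde x\}$ and $2\|\mu-\tilde\mu\|_{TV}$ integrate to themselves, while $2\,\mathbf 1\{a\neq a'\}$ integrates to $2\sum_{a\neq a'}\gamma(a,a')=\|u-\tilde u\|_1$. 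This gives exactly $|C(x,u,\mu)-C(\tilde x,\tilde u,\tilde\mu)|\le L_1(\mathbf 1\{x\neq\tilde x\}+\|u-\tilde u\|_1+2\|\mu-\tilde\mu\|_{TV})$ and the stated inequality for $P$ with $L_1$ replaced by $K_1/2$.

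I do not expect a genuine obstacle; the only point requiring care is the bookkeeping of constants, in particular that the coefficient of $\|u-\tilde u\|_1$ comes out as $L_1$ (respectively $K_1/2$) rather than twice as large. This is precisely what the choice of a maximal coupling provides, through the identity $\sum_{a\neq a'}\gamma(a,a')=\|u-\tilde u\|_{TV}=\tfrac12\|u-\tilde u\|_1$; a cruder term-by-term estimate against $|u(a)-\tilde u(a)|$ would lose this factor $\tfrac12$. Equivalently, one could instead split off the ``only-$u$-moves'' term $\sum_a c(\tilde x,a,\tilde\mu)(u(a)-\tilde u(a))$, recenter it using $\sum_a(u-\tilde u)(a)=0$, and bound it by the oscillation of $a\mapsto c(\tilde x,a,\tilde\mu)$, which is at most $2L_1$ by Assumption~\ref{assump:1}(a); the kernel version then uses that the family $\{p(\cdot\,|\tilde x,a,\tilde\mu)\}_{a\in A}$ has total-variation diameter at most $K_1$.
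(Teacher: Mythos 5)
Your argument is correct: coupling $u$ and $\tilde u$ maximally, expanding $C(x,u,\mu)-C(\tilde x,\tilde u,\tilde\mu)=\sum_{a,a'}(c(x,a,\mu)-c(\tilde x,a',\tilde\mu))\gamma(a,a')$, and integrating the pointwise bound of Assumption \ref{assump:1} does yield the coefficient $L_1$ (resp.\ $K_1/2$) on $\|u-\tilde u\|_1$, precisely because the off-diagonal mass of the maximal coupling equals $\|u-\tilde u\|_{TV}=\tfrac12\|u-\tilde u\|_1$; the same computation with $\|\cdot\|_{TV}$ pulled inside the sum handles $P$. Where you differ from the paper is that the paper does not prove the lemma at all: it simply defers to \cite[Proposition 1]{anahtarci2023q}, whose argument is of the second kind you sketch at the end (split the increment into a piece where only $x,\mu$ move and a piece where only the action distribution moves, recenter the latter using $\sum_a(u-\tilde u)(a)=0$, and bound it by half the oscillation of $a\mapsto c(\tilde x,a,\tilde\mu)$, which Assumption \ref{assump:1} caps at $2L_1$, and analogously for the kernel family with diameter $K_1$). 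So your proposal supplies a self-contained derivation the paper omits, and the coupling formulation has the mild advantage of treating the $x$, $u$, and $\mu$ variations in one stroke rather than via a chain of triangle inequalities; both routes give identical constants, and the only delicate point in either is exactly the factor $\tfrac12$ you flag.
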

\begin{proof}
    The bounds follow from \cite[Proposition 1]{anahtarci2023q}.
\end{proof}
To have strongly convex $Q$-functions in the dynamic programming formulation of the objective function (value iteration in the case of stationary MFGs), one often perturbs the cost function $C$ with a $\rho$-strongly convex function $\Omega:\mathcal P(A) \to \mathbb R$ under the $\|\cdot \|_1$ norm i.e., $\Omega(u)-\frac{\rho}2\|u\|^2_1$ is convex over $\mathcal P(A)$. We call the resulting MFG $(X,\mathcal P(A),C+\Omega,P,\mu_0,T)$ a \emph{regularized MFG}. With a slight abuse of notation, by MFG$_{\mathrm T}$ we will denote the regularized MFG $(X,\mathcal P(A),C+\Omega,P,\mu_0,T)$. We refer to \cite{anahtarci2023q} for further details on regularized MFGs. By perturbing the cost function $C$ with $\Omega$, we obtain Lipschitz continuous optimal policies at the cost of a deviation from the MFE of the system $(X,\mathcal P(A),C+\Omega,P,\mu_0,T)$, which will be essential for our analysis as the optimal policies directly enter the state measure updates. In what follows, we will establish a contraction principle to calculate the MFE of the regularized system $(X,\mathcal P(A),C,P,\mu_0,T)$.

To prevent potential confusion regarding our terminology, when we use the terms ``nonnegative'' (resp. ``positive'') in the context of a vector (or a matrix) in this subsection, we will mean that all entries of the vector (or the matrix) are nonnegative (resp. positive).

Our fixed-point iterations will be done by consecutive updates of $Q$-functions and state measures. To handle the updates of the $Q$-functions, for a continuous function $Q$ over $X\times \mathcal P(A)$ and a probability measure $\mu \in \mathcal P(X)$, we let
\begin{equation}\label{eq:g0}
H_{1,t}(Q,\mu)(x,u) := C(x,u,\mu) +\Omega(u) + \beta \sum_{y \in X} \min_{b \in \mathcal P(A)} Q(y,b) \,P(y|x,u,\mu),
\end{equation}
for $T> t\geq 1$ to define a general iteration of $Q$-functions and define $H_{1,0}(Q):= H_{1,1}(Q,\mu_0)$ to account for the evolution at time $t=0,$ and $H_{1,T}(\mu)(x,u) = C(x,u,\mu)+\Omega(u),$ which will determine the value that our value function takes at the terminal time $t=T$. Note that
\[
H_{1,t}(Q,\mu)(x,u) = \langle h(Q,\mu)(x,a) , u \rangle + \Omega (u),
\]
where $h(Q,\mu)(x,a):=c(x,a,\mu)+\beta\sum_{y \in X}\min_{b\in A}Q(y,b)p(y|x,a,\mu)$. Hence, $Q$-functions that we will obtain via the operators $(H_{1,t})_t$ will be $\rho$-strongly convex under the metric $\| \cdot \|_1$;
that is, for any $x \in X$, $H_{1,i}(Q,\mu)(x,\cdot)$ is strongly convex over $\mathcal P(A)$; thus, for any given $x \in X$, any output of $H_{1,i}(Q,\mu)(x,\cdot)$ admits a unique minimizer. In particular, for a $Q$-function obtained from $H_{1,t}$, the optimal policy is uniquely given by $\delta_{\mathrm{argmin}_{b \in \mathcal P(A)} Q(x,b)}(\cdot)$ and is well defined, where $\delta$ denotes to the Dirac measure. For the updates of our state measures, to capture the evolution \eqref{eq:stat-evolve}, we define
\[
H_{2,t}(Q,\mu)(\cdot) := \sum_{x \in X}\int_{\mathcal P(A)} P(\cdot|x,u,\mu)\delta_{\textrm{argmin}_{b \in \mathcal P(A)} Q(x,b)}(du)\mu(x).
\]
for all $t.$
For a given family $\pmb \mu= (\mu_t)_{t=1}^{T-1}$, one can find a family of optimal $Q$-functions that \emph{correspond} to the flow $\pmb{\mu}$ via the recursive relation $Q^{\pmb \mu}_{T-1}:=H_{1,T-1}(\mu_{T-1}),$ and $Q^{\pmb \mu}_t := H_{1,t}(Q_{t+1},\mu_t)$ for all $t$ in the finite-horizon setting. By $Q^{\pmb \mu}=(Q^{\mu}_t)_{t=0}^{T-1}$, we will denote the flow generated by these $Q$-functions. Then, we would like to update the family $(\mu_t)_{t=1}^{T}$ by setting the recursive relation $\hat \mu_{t+1} := H_{2,t+1}(Q^{\pmb {\mu}}_t,\mu_t)$ for all $t=0,1,\cdots,T-1,$ starting from $(\mu^0_t)_{t=1}^{T-1}=(\mu_t)_{t=1}^{T-1}$. For a flow $\pmb{\mu}^T = (\mu_t^T)_t \in \{\mu_0\}\times \mathcal P(X)^{T-1}$, we will denote this iteration by the operator 
\[
\pmb H_T(\pmb \mu^T)=(\mu_0,H_{2,1}(Q^{\pmb \mu^T}_0,\mu_0),H_{2,2}(Q^{\pmb \mu^T}_1,\mu^T_1),\cdots,H_{2,T-1}(Q^{\pmb \mu^T}_{T-2},\mu_{T-2}))
\]
for shorthand.

\begin{figure}
\begin{tikzcd}[column sep=large, row sep=large]
\mu_0 \arrow[dr,"H_{1}"] \arrow[ddr, bend right=35,"H_{2}"] & \mu_1 \arrow[dr,"H_{1}"] \arrow[ddr, bend right=35,"H_2"] & \mu_2 \arrow[dr,"H_1"] \arrow[ddr, bend right=35,"H_2"] & \mu_3 \arrow[dr,"H_1"] \arrow[ddr, bend right=35,"H_2"] & \cdots \arrow[dr,"H_1"] \arrow[ddr, bend right=35,"H_2"]& \mu_{T-1} \arrow[dr,"H_{1}"] & \mathrm{Input}\\
& Q^{\pmb \mu}_0 \arrow[r,leftarrow,"H_1"] \arrow[d,"H_2"] & Q^{\pmb \mu}_1 \arrow[r,leftarrow,"H_1"] \arrow[d,"H_2"] & Q^{\pmb \mu}_2 \arrow[r,leftarrow,"H_1"] \arrow[d,"H_2"] & \cdots \arrow[r,leftarrow,"H_1"] & Q^{\pmb \mu}_{T-2} \arrow[d,"H_2"]\arrow[r,leftarrow,"H_1"]& Q^{\pmb \mu}_{T-1} \\
\mu_0 & \mu^{\pmb{new}}_1 & \mu^{\pmb{new}}_2 & \mu^{\pmb{new}}_3 & \cdots & \mu^{\pmb{new}}_{T-1} & \mathrm{Output} \arrow[uu, bend right=35]
\end{tikzcd}
\caption{A diagram that summarizes the iterations procedure that we use while defining the map $\pmb H_T$. We suppressed the index $t$ in the maps $H_1$ and $H_2$.}
\label{fig:h}
\end{figure}

\begin{remark}
By a possible abuse of notation, since the initial state-measure $\mu_0$ is fixed, we will ignore the first component of $\pmb H_T$ in what follows.    
\end{remark}

Our aim is to establish a criterion that guarantees the convergence of a family $\{(\mu^{T,k}_t)_{t=1}^{T-1}:k \in \mathbb N\}$ as $k\to\infty$ obtained from recursions of $\pmb H_T$ to some $\pmb {\tilde \mu}^T = (\tilde \mu^T_t)_{t=1}^{T-1}$ that satisfies the property $\tilde \mu_{t}^T = H_{2,t}(Q^{\pmb {\tilde \mu}^T}_{t-1},\tilde \mu_{t-1}^T)$ for all $t = 1,\cdots,T-1$, which in turn will give us a finite-horizon MFE. To achieve this, our aim is to show that $\pmb H_T$ is a contraction operator. For this, we will equip $\mathcal P(X)^{T-1} := \prod_{t=1}^{T-1}\mathcal P(X)$ with the following norm:
\[
\| \pmb \mu^T- \pmb {\tilde \mu}^T \|_{\mathrm {T-1},\mathrm{TV}} := \sum_{i=1}^{T-1}\| \mu_i^T -\tilde \mu_i^T \|_{\mathrm{TV}},
\]
which metrizes the product topology.
It is easy to see that the convergence in the norm $\|\cdot \|_{T,TV}$ is equivalent to the convergence of the vectors
\[ V(\pmb \mu^T, \pmb{\tilde \mu}^T) := (\| \mu_1^T-\tilde\mu_1^T \|_{\mathrm{TV}},\| \mu_2^T-\tilde \mu^T_2 \|_{\mathrm{TV}},\cdots,\|\mu_{T-1}^T-\tilde\mu^T_{T-1}\|_{\mathrm{TV}}),
\]
in a norm over $\mathbb R^{T-1}$, since $\| \pmb \mu^T- \pmb {\tilde \mu}^T \|_{\mathrm{T-1},\mathrm{TV}}$ is just $V(\pmb \mu^T, \pmb{\tilde \mu}^T)$ evaluated under the $1-$norm over $\mathbb R^{T-1}$. In general, for a vector norm over $\mathbb R^{T-1}$, say $\| \cdot \|$, we also have that $\| V(\cdot,\cdot) \|$ is a norm over $\mathcal P(X)^{T-1}$, which will be useful in the following discussion to determine a norm over $\mathcal P(X)^{T-1}$ that will yield an improved contraction property.

First, we will calculate the Lipschitz coefficients that arise from the variations of $(H_{2,t})_t$ over different flows of state-measures, which will heavily depend on the inequalities established in \cite{anahtarci2023q} for the stationary setting. We will adjust them to the finite-horizon setting. To achieve this, we will first identify a compact subset of the space of functions in which our $Q$-functions will live on. 

\begin{lemma}\label{lem:f0}
    Suppose Assumption \ref{assump:1} holds. Let $\pmb{\mu}^T=(\mu^T_t)_{t=0}^{T-1}\in \mathcal \{\mu_0\}\times \mathcal P(X)^{T-1}$. For $1 \leq t <T-1$, for $Q$-functions over $X \times \mathcal P(A)$ that are $\frac{L_1}{1-\frac{\beta K_1}2}$ Lipschitz over $X$ under any action $u \in \mathcal P(A),$ we have
    \[
    \sup_{u \in \mathcal P(X)} | H_{1,t}(Q,\mu^T_t)(x,u) - H_{1,t}(Q,\mu^T_t)(\tilde x,u) | \leq \frac{L_1}{1-\frac{\beta K_1}2} \, 1_{\{x \not = \tilde x\}},
    \]
    and 
    \[
    \sup_{u \in \mathcal P(A)}| H_{1,T-1}(\mu^T_{T-1})(x,u) - H_{1,T-1}(\mu^T_{T-1})(\tilde x,u) | \leq L_1\,1_{\{x \not = \tilde x\}}.
    \]
    Furthermore
    \begin{align*}
    &\| \mathrm{argmin} _{u \in \mathcal P(A)}H_{1,t}(Q,\mu^T_t)(x,u)-\mathrm{argmin}_{u \in \mathcal P(A)}H_{1,t}(\tilde Q,\tilde \mu^T_t)(\tilde x,u) \|_{1} 
    \\&\leq \frac{\beta}{\rho} \| Q -\tilde Q\|_{\infty} + \frac{L_1}{\rho(1-\frac{\beta K_1}2)}\left(1_{\{x \not = x\}} + 2 \,  \| \mu^T_t - \tilde \mu^T_t\|_{\mathrm{TV}} \right)
    \end{align*}
\end{lemma}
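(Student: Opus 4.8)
The plan is to prove the three estimates in turn, relying on the Lipschitz bounds of Assumption \ref{assump:1} (equivalently, their transforms for $C$ and $P$ given by the preceding lemma), on the elementary inequality
\[
\Bigl| \int_X g\, d\nu_1 - \int_X g\, d\nu_2 \Bigr| \;\le\; \bigl(\sup_{X} g - \inf_{X} g\bigr)\, \|\nu_1-\nu_2\|_{TV},
\]
valid for any bounded $g$ and $\nu_1,\nu_2\in\mathcal P(X)$ (decompose $\nu_1-\nu_2$ into its positive and negative parts, each of mass $\|\nu_1-\nu_2\|_{TV}$), and on the stability of minimizers of $\rho$-strongly convex functions, which I would import from \cite{anahtarci2023q} and adapt to the finite-horizon notation.

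For the first estimate, fix $u\in\mathcal P(A)$ and subtract: the term $\Omega(u)$ cancels, leaving $C(x,u,\mu)-C(\tilde x,u,\mu)$, bounded by $L_1\,1_{x\neq\tilde x}$, plus $\beta$ times the difference of the integrals of $g:=\min_{b\in A}Q(\cdot,b)$ against $P(\cdot|x,u,\mu)$ and $P(\cdot|\tilde x,u,\mu)$. Since $Q$ is $\tfrac{L_1}{1-\beta K_1/2}$-Lipschitz over the (discrete) state space, $g$ has oscillation at most $\tfrac{L_1}{1-\beta K_1/2}$, so by the displayed inequality and the Lipschitz bound on $P$ this last contribution is at most $\beta\tfrac{K_1}{2}\tfrac{L_1}{1-\beta K_1/2}\,1_{x\neq\tilde x}$. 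Adding, and using that $\tfrac{L_1}{1-\beta K_1/2}$ is the fixed point of $L\mapsto L_1+\tfrac{\beta K_1}{2}L$, i.e. $L_1+\tfrac{\beta K_1}{2}\cdot\tfrac{L_1}{1-\beta K_1/2}=\tfrac{L_1}{1-\beta K_1/2}$, yields the first bound; together with the second bound this shows that every $Q$-function produced by the recursion $(H_{1,t})_t$ is $\tfrac{L_1}{1-\beta K_1/2}$-Lipschitz, which is the compact family of $Q$-functions alluded to above. The second bound is immediate from the definition of $H_{1,T}(\mu)$ and the $L_1$-Lipschitz continuity of $c$ in its first argument (Assumption \ref{assump:1}(a)), the $u$-only term cancelling.

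For the third estimate, use the representation $H_{1,t}(Q,\mu)(x,u)=\langle h(Q,\mu,x),u\rangle+\Omega(u)$; since $C$ and $P$ are linear in $u$, one has $h(Q,\mu,x)_a=c(x,a,\mu)+\beta\int_X\min_{b}Q(y,b)\,p(dy|x,a,\mu)$. As $\Omega$ is $\rho$-strongly convex with respect to $\|\cdot\|_1$, the map $a\mapsto\mathrm{argmin}_{u\in\mathcal P(A)}\bigl(\langle a,u\rangle+\Omega(u)\bigr)$ is $\tfrac1\rho$-Lipschitz from $(\mathbb R^{|A|},\|\cdot\|_\infty)$ into $(\mathbb R^{|A|},\|\cdot\|_1)$ — the ingredient I would take from \cite{anahtarci2023q}, obtained from the first-order optimality inequality on $\mathcal P(A)$ together with strong monotonicity of $\nabla\Omega$. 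It then suffices to bound $\|h(Q,\mu,x)-h(\tilde Q,\tilde\mu,\tilde x)\|_\infty$: for each $a$, decompose the difference into (i) the variation of the one-stage cost, $\le L_1\bigl(1_{x\neq\tilde x}+2\|\mu-\tilde\mu\|_{TV}\bigr)$ by Assumption \ref{assump:1}(a); (ii) the variation of $p$ integrated against $\min_bQ(\cdot,b)$, $\le \tfrac{L_1}{1-\beta K_1/2}\cdot\tfrac{K_1}{2}\bigl(1_{x\neq\tilde x}+2\|\mu-\tilde\mu\|_{TV}\bigr)$ by the displayed inequality and Assumption \ref{assump:1}(b); and (iii) $\int_X\bigl(\min_bQ(y,b)-\min_b\tilde Q(y,b)\bigr)p(dy|\tilde x,a,\tilde\mu)$, $\le\|Q-\tilde Q\|_\infty$. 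Summing, collapsing (i)+(ii) by the same algebraic identity as before, and dividing by $\rho$ gives the stated bound.

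I expect the minimizer-stability step in the third part to be the only genuinely non-routine point: it requires that $\Omega$ be strongly convex precisely in the $\|\cdot\|_1$ norm, so that changing the linear part by $\delta$ (measured in $\|\cdot\|_\infty$) displaces the constrained minimizer over the simplex $\mathcal P(A)$ by at most $\delta/\rho$ (measured in $\|\cdot\|_1$), and one must treat the optimality conditions on $\mathcal P(A)$ with care; the rest is bookkeeping with the two Lipschitz estimates and the oscillation-versus-total-variation inequality, the only point requiring attention being the coefficient $\tfrac{K_1}{2}$ (not $K_1$), which is exactly what makes it combine with $L_1$ into $\tfrac{L_1}{1-\beta K_1/2}$.
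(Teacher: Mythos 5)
Your proposal is correct and follows essentially the same route as the paper: split $H_{1,t}$ into the one-stage cost variation and the discounted kernel variation, bound the latter by the oscillation of $\min_{b}Q(\cdot,b)$ against the total-variation bound on $P$, and get the minimizer estimate from the $\rho$-strong convexity of $\Omega$ (the paper simply cites \cite[Lemma 3]{anahtarci2023q} at that step, which you rederive correctly). If anything, your direct use of the fixed-point identity $L_1+\tfrac{\beta K_1}{2}\cdot\tfrac{L_1}{1-\beta K_1/2}=\tfrac{L_1}{1-\beta K_1/2}$ is tighter than the paper's backward induction, which as written yields the larger constant $L_1+\beta L_1K_1/(1-\tfrac{\beta K_1}{2})$ rather than the constant claimed in the statement; just be careful in the third estimate to keep the factor $\beta$ attached to both the kernel-variation and the $\|Q-\tilde Q\|_{\infty}$ terms when collapsing, as your final bound implicitly does.
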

\begin{proof}
    See Appendix \ref{sect:lem:f0}.
\end{proof}

{\begin{remark}\label{rem:discount}
For any $T \in \mathbb N$, and $\pmb \mu^T \in \prod_{t=0}^{T-1}\mathcal P(X)$, the quantity $\frac{L_1}{1-\frac{\beta K_1}{2}}$ serves as a uniform Lipschitz bound for the family $(Q_t^{\pmb \mu^T})_t$ over $X$. To obtain this Lipschitz condition, we use the Lipschitz properties of $C$ and $P$ over $X$. For this bound to be well defined, one requires $\frac{\beta K_1}{2}<1$. Since $|X|<\infty$, however, this condition is generally not restrictive because the total variation norm is bounded above by $1$, and one may always choose $\frac{K_1}{2}\leq 1$. To be precise, we can separate the Lipschitz condition for $P$ we have established in Lemma \ref{lem:a} as
\begin{align}\label{eq:r1}
\|P(\cdot|x, u, \mu) - P(\cdot|\tilde{x}, \tilde{u}, \tilde{\mu})\|_{\mathrm{TV}} \leq \frac{K_1}2 1_{\{x \neq \tilde{x}\}} + \frac{\hat K_1}2 \left(\|u - \tilde{u}\|_1 + 2 \, \|\mu - \hat{\mu}\|_{\mathrm{TV}} \right).
\end{align}
Since $\|P(\cdot|x, u, \mu) - P(\cdot|\tilde{x}, \tilde{u}, \tilde{\mu})\|_{\mathrm{TV}} \le 1$, we always have $\frac{K_1}{2}\le 1$.

In the borderline case $\frac{K_1}{2}=1$, it is further necessary to assume $\beta<1$ in order to obtain a well defined Lipschitz constant that is uniform in $T$, which is the motivation behind the discounted structure used in this subsection. Thus, by assuming $\frac{K_1}2<1$, one can also obtain similar results for non-discounted finite-horizon MFG. To make the presentation of terms we obtain less cumbersome we will not do the separation of the Lipschitz coefficients as done in \eqref{eq:r1}; instead, we will simply suppose that $\beta<1$ throughout this subsection.
\end{remark}
}

\begin{remark}
 Let $\pmb{\mu}^T\in \{\mu_0\}\times \mathcal P(X)^{T-1}$. Then, Lemma \ref{lem:f0} implies that the $Q$-functions $(Q^{\pmb \mu^T}_t)_t$ satisfy the Lipschitz property
 \[
|Q^{\pmb \mu^T}_t(x,u)-Q^{\pmb \mu^T}_t(\tilde x,u)| \le \frac{L_1}{1-\frac{\beta K_1}2}\,1_{\{x \not = \tilde x\}}
 \]
 for $0 \le t <T-1,$ and
 \[
|Q^{\pmb \mu^T}_{T-1}(x,u)-Q^{\pmb \mu^T}_{T-1}(\tilde x,u)| \le L_1 \,1_{\{x \not = \tilde x\}}.
 \]
\end{remark}

We next establish Lipschitz properties of the maps $Q^{\pmb \mu^T}_t(=H_{1,t}(Q^{\pmb \mu^T}_{t+1},\mu^T_t))$, which is needed as our iterations of $H_{2,t}$ depend of these $Q$-functions. Before proceeding to the statement, we define the constant $\bar L= \frac{L_1}{1-\frac{\beta K_1}{2}}$.

\begin{lemma}\label{lem:f2}
    Suppose Assumption \ref{assump:1} holds. Let $\pmb {\tilde \mu}^T,\pmb {\mu}^T \in \{\mu_0\}\times \mathcal P(X)^{T-1}$. Recall that for all $0\le t \le T$, we have $Q^{\pmb \mu^T}_t=H_{1,t}(Q^{\pmb \mu^T}_{t+1},\mu_t^T)$ and $Q^{\tilde{\pmb \mu}^T}_t=H_{1,t}(Q^{\pmb {\tilde \mu}^T}_{t+1},\tilde \mu_t^T)$. Then, we have
    \[
    \| Q^{\pmb {\mu}^T}_0 - Q^{\pmb {\tilde \mu}^T}_0\|_{\infty} \leq \beta \| Q^{\pmb {\mu}^T}_1 - Q^{\pmb {\tilde \mu}^T}_1 \|_{\infty},
    \]
    \[
    \| Q^{\pmb { \mu}^T}_t - Q^{\pmb {\tilde \mu}^T}_t \| _{\infty} \leq 2\bar L\| \mu_t^T - \tilde \mu_t^T \|_{\mathrm{TV}} + \beta \| Q^{\pmb {\mu}^T}_{t+1} - Q^{\pmb {\tilde \mu}^T}_{t+1} \|_{\infty},
    \]
    for $T-1>t>0,$
    and
    \[
    \| Q^{\pmb {\mu}^T}_{T-1} - Q^{\pmb {\tilde \mu}^T}_{T-1} \|_{\infty} \leq 2L_1\| \mu_{T-1} - \tilde \mu_{T-1} \|_{\mathrm{TV}}.
    \]
\end{lemma}
\begin{proof}
    See Appendix \ref{sect:lem:f2}.
\end{proof}

 Let $\bar K =\frac{3K_1}{2}+\frac{K_1 \bar L}{2\rho(1-\beta)}.$ With this notation, using Lemma \ref{lem:f2}, we obtain Lipschitz properties of the maps $(H_{2,t})_t$ under the $Q$-functions $Q^{\pmb \mu}$.

\begin{lemma}\label{lem:f1}
    Suppose Assumption \ref{assump:1} holds. Let $\pmb {\tilde \mu},\pmb {\mu} \in \{\mu_0\}\times \mathcal P(X)^{T-1}$. Then, we have
    \[
    \| H_{2,1}(Q^{\pmb { \mu}^T}_0,\mu_0) - H_{2,1}(Q^{\pmb {\tilde \mu}^T}_0,\mu_0) \|_{\mathrm{TV}} \leq \frac {K_1}{2\rho}\| Q^{\pmb {\mu}^T}_0 - Q^{\pmb {\tilde \mu}^T}_0 \|_{\infty},
    \]
    and
    \[
        \| H_{2,t}(Q^{\pmb {\mu}^T}_{t-1},\mu^T_{t-1}) - H_{2,t}(Q^{\pmb {\tilde \mu}^T}_{t-1},\tilde \mu_{t-1}^T) \|_{\mathrm{TV}} \leq \frac {K_1}{2\rho}\| Q^{\pmb {\mu}^T}_{t-1} - Q^{\pmb {\tilde \mu}^T}_{t-1} \|_{\infty} + \bar K\| \mu_{t-1}^T - \tilde \mu_{t-1}^T\|_{\mathrm{TV}}
    \]
    for $T-1\geq t \ge 2$.
\end{lemma}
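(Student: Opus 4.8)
The plan is to first rewrite $H_{2,t}$ in a form that exposes the transition kernel, and then run a telescoping argument. For $1\leq t\leq T$ each $Q^{\pmb\mu}_{t-1}$ is produced by one of the operators $H_{1,t-1}$, so $Q^{\pmb\mu}_{t-1}(x,\cdot)=\langle h,\cdot\rangle+\Omega$ is $\rho$-strongly convex on $\mathcal P(A)$; hence the greedy policy $\pi_{Q}(x):=\mathrm{argmin}_{u\in\mathcal P(A)}Q^{\pmb\mu}_{t-1}(x,u)$ is single-valued and $H_{2,t}(Q^{\pmb\mu}_{t-1},\mu)(\cdot)=\int_X P(\cdot\,|\,x,\pi_{Q}(x),\mu)\,\mu(dx)$, i.e.\ the one-step push-forward of $\mu$ along $P$ under the greedy policy. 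Two Lipschitz facts about $\pi_Q$ will be used. Since $Q^{\pmb\mu}_{t-1}$ and $Q^{\pmb{\tilde\mu}}_{t-1}$ carry the same regularizer $\Omega$, their difference is linear in $u$, and the standard first-order / strong-convexity bound for a $\rho$-strongly convex function perturbed by a linear term gives $\|\pi_{Q^{\pmb\mu}_{t-1}}(x)-\pi_{Q^{\pmb{\tilde\mu}}_{t-1}}(x)\|_1\leq\tfrac1\rho\sup_u|Q^{\pmb\mu}_{t-1}(x,u)-Q^{\pmb{\tilde\mu}}_{t-1}(x,u)|\leq\tfrac1\rho\|Q^{\pmb\mu}_{t-1}-Q^{\pmb{\tilde\mu}}_{t-1}\|_\infty$. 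And by Lemma~\ref{lem:f0} the map $x\mapsto Q^{\pmb\mu}_{t-1}(x,\cdot)$ is Lipschitz in $x$ with a horizon-independent constant, so the same bound yields $\|\pi_{Q^{\pmb\mu}_{t-1}}(x)-\pi_{Q^{\pmb\mu}_{t-1}}(\tilde x)\|_1\leq\tfrac{\bar L}{\rho}\mathbf 1_{\{x\neq\tilde x\}}$.

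With this, the first inequality ($t=1$) is immediate: only the $Q$-slot changes because $\mu_0$ is fixed, so the difference of the two push-forwards equals $\int_X[P(\cdot\,|\,x,\pi_{Q^{\pmb\mu}_0}(x),\mu_0)-P(\cdot\,|\,x,\pi_{Q^{\pmb{\tilde\mu}}_0}(x),\mu_0)]\,\mu_0(dx)$; moving $\|\cdot\|_{TV}$ inside the integral and combining the Lipschitz estimate for $P$ in its action argument (coefficient $\tfrac{K_1}{2}$) with the first fact above produces $\tfrac{K_1}{2\rho}\|Q^{\pmb\mu}_0-Q^{\pmb{\tilde\mu}}_0\|_\infty$.

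For $t>1$ I would telescope through the intermediate term $H_{2,t}(Q^{\pmb{\tilde\mu}}_{t-1},\mu_{t-1})$: first change the $Q$-slot with $\mu_{t-1}$ frozen, then change $\mu_{t-1}\to\tilde\mu_{t-1}$ with $Q^{\pmb{\tilde\mu}}_{t-1}$ (hence the policy $\pi:=\pi_{Q^{\pmb{\tilde\mu}}_{t-1}}$) frozen. The first piece is handled exactly as in the $t=1$ case and contributes $\tfrac{K_1}{2\rho}\|Q^{\pmb\mu}_{t-1}-Q^{\pmb{\tilde\mu}}_{t-1}\|_\infty$. In the second piece $\mu_{t-1}$ appears both as the kernel's measure argument and as the integrating measure, so I split it once more by inserting $\int_X P(\cdot\,|\,x,\pi(x),\mu_{t-1})\,\tilde\mu_{t-1}(dx)$: the term in which only the kernel's measure argument changes is bounded, via the Lipschitz estimate for $P$ in $\mu$, by $K_1\|\mu_{t-1}-\tilde\mu_{t-1}\|_{TV}$, and the remaining term is $\int_X P(\cdot\,|\,x,\pi(x),\mu_{t-1})\,(\mu_{t-1}-\tilde\mu_{t-1})(dx)$. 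For that term I would use that $\mu_{t-1}-\tilde\mu_{t-1}$ has zero total mass: subtracting a reference measure $P(\cdot\,|\,x_0,\pi(x_0),\mu_{t-1})$ replaces the integrand by $P(\cdot\,|\,x,\pi(x),\mu_{t-1})-P(\cdot\,|\,x_0,\pi(x_0),\mu_{t-1})$, whose total variation is at most $\tfrac{K_1}{2}(1+\tfrac{\bar L}{\rho})\mathbf 1_{\{x\neq x_0\}}$ by the state-Lipschitz estimate for $P$ together with the spatial policy bound; pairing this against the positive and negative parts of $\mu_{t-1}-\tilde\mu_{t-1}$ bounds it by a constant multiple of $K_1(1+\tfrac{\bar L}{\rho})\|\mu_{t-1}-\tilde\mu_{t-1}\|_{TV}$. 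Collecting the $\mu_{t-1}$-terms gives a coefficient of the form $cK_1+c'K_1\bar L/\rho$, which is dominated by $\bar K$, completing the argument.

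The step I expect to be the main obstacle is this last term, where $\mu_{t-1}$ is the \emph{integrating} measure rather than an argument of the kernel: unlike the other terms it is not a pointwise-in-$x$ estimate, and a crude triangle inequality produces a coefficient that cannot be forced below $1$, which would destroy contractivity. The remedy requires both the zero-total-mass cancellation and the uniform-in-$t$ spatial Lipschitz continuity of the greedy policy, i.e.\ precisely Lemma~\ref{lem:f0} — and hence the finiteness of the horizon, which keeps the $Q$-functions Lipschitz with a horizon-independent constant. Everything else is bookkeeping: keeping the regularizer $\Omega$ common to $Q^{\pmb\mu}_{t-1}$ and $Q^{\pmb{\tilde\mu}}_{t-1}$ so that $Q\mapsto\pi_Q$ is genuinely Lipschitz (not just $\tfrac12$-Hölder) in $Q$, and tracking the factor $\tfrac12$ between $\|\cdot\|_{TV}$ and $\|\cdot\|_1$.
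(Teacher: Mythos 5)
Your argument is essentially the paper's proof (which follows the proof of Proposition 2 of Anahtarc{\i} et al.): split the difference into a piece where only the kernel's arguments (greedy policy and measure argument) change and a piece where only the integrating measure changes, use strong convexity of $u\mapsto\langle h,u\rangle+\Omega(u)$ to get the $\tfrac1\rho$-Lipschitz dependence of the greedy policy on $Q$, the Lipschitz bounds on $P$, and then a Dobrushin-type estimate---using the spatial Lipschitz continuity of the greedy policy from Lemma~\ref{lem:f0}---for the integrating-measure term; the paper imports this last step as Lemma A2 of Kontorovich--Ramanan. So the route is the same; the only point where your write-up does not deliver the stated bound is the constant in that last term. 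Subtracting the fixed reference kernel $P(\cdot\,|\,x_0,\pi(x_0),\mu_{t-1})$ and pairing against the positive and negative parts of $\mu_{t-1}-\tilde\mu_{t-1}$ charges the oscillation bound $\tfrac{K_1}{2}\bigl(1+\tfrac{\bar L}{\rho}\bigr)$ against $\|\mu_{t-1}-\tilde\mu_{t-1}\|_1=2\|\mu_{t-1}-\tilde\mu_{t-1}\|_{TV}$, i.e.\ it yields the coefficient $K_1\bigl(1+\tfrac{\bar L}{\rho}\bigr)$; adding the $K_1$ coming from the kernel's measure argument gives $2K_1+\tfrac{K_1\bar L}{\rho}$ on $\|\mu_{t-1}-\tilde\mu_{t-1}\|_{TV}$. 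This is \emph{not} dominated by $\bar K=\tfrac{3K_1}{2}+\tfrac{K_1\bar L}{2\rho(1-\beta)}$ in general: the required inequality $\tfrac{K_1}{2}\le \tfrac{K_1\bar L(2\beta-1)}{2\rho(1-\beta)}$ fails, for instance, whenever $\beta\le\tfrac12$. So, as written, your ``dominated by $\bar K$'' conclusion is a genuine (if small) gap.

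The repair is the sharper form of the very cancellation you invoke. Since $\mu_{t-1}$ and $\tilde\mu_{t-1}$ are probability measures, the Jordan parts $(\mu_{t-1}-\tilde\mu_{t-1})^{+}$ and $(\mu_{t-1}-\tilde\mu_{t-1})^{-}$ have \emph{equal} mass $\|\mu_{t-1}-\tilde\mu_{t-1}\|_{TV}$; coupling them (rather than fixing a reference point $x_0$) bounds the integrating-measure term by $\sup_{x,y}\|P(\cdot\,|\,x,\pi(x),\mu_{t-1})-P(\cdot\,|\,y,\pi(y),\mu_{t-1})\|_{TV}\,\|\mu_{t-1}-\tilde\mu_{t-1}\|_{TV}\le \tfrac{K_1}{2}\bigl(1+\tfrac{\bar L}{\rho}\bigr)\|\mu_{t-1}-\tilde\mu_{t-1}\|_{TV}$, which is exactly the content of the cited Lemma A2. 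The total coefficient then becomes $\tfrac{3K_1}{2}+\tfrac{K_1\bar L}{2\rho}\le\bar K$, and the rest of your proof (the $t=1$ case and the $\tfrac{K_1}{2\rho}\|Q^{\pmb\mu}_{t-1}-Q^{\pmb{\tilde\mu}}_{t-1}\|_\infty$ term, including the correct $\tfrac1\rho$ rather than $\tfrac\beta\rho$ Lipschitz constant for the greedy map) matches the paper.
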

\begin{proof}
    See Appendix \ref{sect:lem:f1}.
\end{proof}

The next lemma is the key observation for our (eventual) contraction result for $\pmb H_T$ over $(\mathcal P(X)^{T-1},\|\cdot\|_{\mathrm T,\mathrm{TV}})$, which essentially combines the variations of the state-flow measures we have obtained under $\pmb H_T$ in Lemmas \ref{lem:f2} and \ref{lem:f1} in a matrix inequality format.

\begin{lemma}\label{lem:f4}
     Suppose Assumption \ref{assump:1} holds. Recall that $\bar L = \frac{L_1}{1-(\beta K_1/2)}$ and $\bar K =\frac{3K_1}{2}+\frac{K_1 \bar L}{2\rho(1-\beta)}.$ With this notation, for any $\pmb \mu^T, \pmb {\tilde \mu}^T \in \{\mu_0\}\times \mathcal P(X)^{T-1}$, we have the following matrix inequality
    \begin{align}
    V( \pmb H_T(\pmb \mu^T) , \pmb H_T(\pmb {\tilde \mu}^T) ) \leq A_T V(\pmb \mu^T , \pmb {\tilde \mu}^T),
    \end{align}
    where the inequality is defined term by term, and the $T \times T$ matrix $A_T$ is given by
    \small
    \begin{equation*}
    A_T = \begin{bmatrix}
        \frac{\bar LK_1}{\rho}\beta  & \frac{\bar LK_1}{\rho}\beta^2 & \cdots  & \frac{\bar LK_1}{\rho} \beta^{T-1} \\
        \bar K+\frac{\bar LK_1}{\rho} & \frac{\bar LK_1}{\rho}\beta & \cdots & \frac{\bar LK_1}{\rho} \beta^{T-2} \\
        0 & \bar K +\frac{\bar LK_1}{\rho} & \cdots & \frac{\bar LK_1}{\rho} \beta^{T-3} \\
        \vdots & \vdots & \ddots & \vdots \\
        0 & 0 & \cdots & \frac{\bar LK_1}{\rho}\beta
    \end{bmatrix}.
    \end{equation*}
\end{lemma}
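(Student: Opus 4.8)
The plan is to derive the asserted bound coordinate by coordinate: the $i$-th entry of $\pmb H(\pmb\mu)-\pmb H(\pmb{\tilde\mu})$ is $H_{2,i}(Q^{\pmb\mu}_{i-1},\mu_{i-1})-H_{2,i}(Q^{\pmb{\tilde\mu}}_{i-1},\tilde\mu_{i-1})$, and I will bound its total-variation norm by a nonnegative linear combination of $\|\mu_1-\tilde\mu_1\|_{TV},\dots,\|\mu_T-\tilde\mu_T\|_{TV}$; reading off those coefficients gives the rows of $A_T$. Because every coefficient produced is nonnegative and every $\|\mu_j-\tilde\mu_j\|_{TV}$ is nonnegative, these coordinatewise estimates are precisely the term-by-term vector inequality in the statement, so no separate argument about a matrix norm is needed.

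For the first coordinate, Lemma \ref{lem:f1} gives $\|H_{2,1}(Q^{\pmb\mu}_0,\mu_0)-H_{2,1}(Q^{\pmb{\tilde\mu}}_0,\mu_0)\|_{TV}\le\frac{K_1}{2\rho}\|Q^{\pmb\mu}_0-Q^{\pmb{\tilde\mu}}_0\|_\infty$, with no $\mu$-term because the second argument is the fixed $\mu_0$; for $2\le i\le T$ it gives $\|H_{2,i}(Q^{\pmb\mu}_{i-1},\mu_{i-1})-H_{2,i}(Q^{\pmb{\tilde\mu}}_{i-1},\tilde\mu_{i-1})\|_{TV}\le\frac{K_1}{2\rho}\|Q^{\pmb\mu}_{i-1}-Q^{\pmb{\tilde\mu}}_{i-1}\|_\infty+\bar K\|\mu_{i-1}-\tilde\mu_{i-1}\|_{TV}$. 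The $\bar K$ term is what produces the single subdiagonal entry (after adding the contribution of the $Q$-term), and its absence in the first coordinate is why the first row of $A_T$ carries no $\bar K$.

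The next step is to unroll Lemma \ref{lem:f2}. Since $i-1$ ranges over $\{0,1,\dots,T-1\}$, I only ever use the two recursive bounds together with the terminal bound at $t=T$. For $1\le t\le T-1$, iterating $\|Q^{\pmb\mu}_t-Q^{\pmb{\tilde\mu}}_t\|_\infty\le 2\bar L\|\mu_t-\tilde\mu_t\|_{TV}+\beta\|Q^{\pmb\mu}_{t+1}-Q^{\pmb{\tilde\mu}}_{t+1}\|_\infty$ down to the terminal estimate $\|Q^{\pmb\mu}_T-Q^{\pmb{\tilde\mu}}_T\|_\infty\le 2L_1\|\mu_T-\tilde\mu_T\|_{TV}$ yields $\|Q^{\pmb\mu}_t-Q^{\pmb{\tilde\mu}}_t\|_\infty\le 2\bar L\sum_{s=t}^{T-1}\beta^{s-t}\|\mu_s-\tilde\mu_s\|_{TV}+2L_1\beta^{T-t}\|\mu_T-\tilde\mu_T\|_{TV}$, and the case $t=0$ follows by prepending one more factor of $\beta$ via $\|Q^{\pmb\mu}_0-Q^{\pmb{\tilde\mu}}_0\|_\infty\le\beta\|Q^{\pmb\mu}_1-Q^{\pmb{\tilde\mu}}_1\|_\infty$, giving $\|Q^{\pmb\mu}_0-Q^{\pmb{\tilde\mu}}_0\|_\infty\le 2\bar L\sum_{s=1}^{T-1}\beta^{s}\|\mu_s-\tilde\mu_s\|_{TV}+2L_1\beta^{T}\|\mu_T-\tilde\mu_T\|_{TV}$.

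Finally I substitute these $Q$-bounds into the coordinatewise estimates from the first step, multiply by $\frac{K_1}{2\rho}$ (so $2\bar L$ becomes $\frac{\bar LK_1}{\rho}$ and $2L_1$ becomes $\frac{L_1K_1}{\rho}$), and add back $\bar K\|\mu_{i-1}-\tilde\mu_{i-1}\|_{TV}$. Collecting the coefficient of $\|\mu_j-\tilde\mu_j\|_{TV}$: in row $1$ it equals $\frac{\bar LK_1}{\rho}\beta^{j}$ for $1\le j\le T-1$ and $\frac{L_1K_1}{\rho}\beta^{T}$ for $j=T$; in row $i$ with $2\le i\le T$ it equals $0$ for $j<i-1$, $\bar K+\frac{\bar LK_1}{\rho}$ for $j=i-1$, $\frac{\bar LK_1}{\rho}\beta^{\,j-i+1}$ for $i\le j\le T-1$, and $\frac{L_1K_1}{\rho}\beta^{\,T-i+1}$ for $j=T$. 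These match the entries of $A_T$ exactly, which completes the proof. The only place that needs care is the index bookkeeping together with the three boundary effects — the fixed $\mu_0$ in coordinate $1$, the terminal $Q$-estimate carrying $L_1$ rather than $\bar L$ (hence the distinguished last column of $A_T$), and the fact that $Q^{\pmb\mu}_{i-1}$ depends only on $(\mu_s)_{s\ge i-1}$ (hence the upper-Hessenberg sparsity pattern); there is no analytic difficulty beyond Lemmas \ref{lem:f1} and \ref{lem:f2} themselves.
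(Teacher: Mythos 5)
Your proposal is correct and follows essentially the same route as the paper: bound each coordinate of $\pmb H(\pmb\mu)-\pmb H(\pmb{\tilde\mu})$ via Lemma \ref{lem:f1}, unroll the backward recursion of Lemma \ref{lem:f2} down to the terminal $L_1$-estimate, and read off the coefficients, which indeed reproduce the rows of $A_T$ (including the distinguished last column, the subdiagonal $\bar K+\frac{\bar LK_1}{\rho}$ term, and the absence of $\bar K$ in the first row). Your explicit closed-form unrolling with exponents $\beta^{\,j-i+1}$ is in fact cleaner than the paper's displayed chain, whose intermediate exponents contain minor typographical slips, so no changes are needed.
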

\begin{proof}
    By Lemmas \ref{lem:f2} and \ref{lem:f1}, we have
    \begin{align*}
    \|H_{2,1}(Q^{\pmb \mu^T}_0,\mu_0) - H_{2,1}(Q^{\pmb{\tilde \mu}^T}_0,\mu_0)\|_{\mathrm{TV}} 
    &\leq \frac{K_1}{2\rho}\beta \| Q^{\pmb \mu^T}_1
     - Q^{\pmb{ \tilde \mu }^T}_1 \|_{\infty} 
     \\& \leq \frac{K_1}{\rho}\bar L \beta \|\mu_1^T - \tilde \mu_1^T \|_{\textrm{TV}} + \frac{K_1}{2\rho}\beta^2\| Q^{\pmb \mu^T}_2
     - Q^{\pmb{ \tilde \mu }^T}_2 \|_{\infty}
     \\& \qquad \vdots
     \\& \leq \frac{K_1}{\rho}\bar L\sum_{i=1}^{T-1}\beta^i\|\mu_i^T - \tilde \mu_i^T\|_{\mathrm{TV}} + \frac{K_1}{2\rho}\beta^T \| Q^{\pmb \mu^T}_T
     - Q^{\pmb{ \tilde \mu }^T}_T\|_{\infty}
     \\& \leq \frac{K_1}{\rho}\bar L\sum_{i=1}^{T-1}\beta^i\|\mu_i^T - \tilde \mu_i^T\|_{\mathrm{TV}} + \frac{K_1}{\rho}\bar L_1\beta^{T-1}\|\mu_{T-1}^T - \tilde \mu_{T-1}^T\|_{\mathrm{TV}},
    \end{align*}
    and
    \begin{align*}
        &\| H_{2,t}(Q^{\pmb {\mu}^T}_{t-1},\mu_{t-1}^T) - H_{2,t}(Q^{\pmb {\tilde \mu}^T}_{t-1},\tilde \mu_{t-1}^T) \|_{\mathrm{TV}} 
        \\& \leq \left(\bar K +\frac{K_1}{\rho}\bar L\right)\|\mu_{t-1}^T -\tilde \mu_{t-1}^T\|_{\mathrm{TV}} + \frac{K_1}{2\rho}\beta \| Q^{\pmb {\mu}^T}_{t} - Q^{\pmb {\tilde \mu}^T}_{t} \|_{\infty}
        \\& \vdots
        \\& \leq \left(\bar K +\frac{K_1}{\rho}\bar L\right)\|\mu_{t-1}^T -\tilde \mu_{t-1}^T\|_{\mathrm{TV}} + \frac{K_1}{\rho}\bar L\sum_{i=t}^{T-2}\beta^{i-t+1}\|\mu_i^T -\tilde \mu_i^T \|_{\mathrm{TV}} 
        \\& \qquad + \frac{K_1}{\rho}L_1 \beta ^{T-t} \|\mu_{T-1}^T - \tilde \mu^T_{T-1}\|_{\mathrm{TV}}.
    \end{align*}
    Using the inequalities established above, {and the bound $L_1 \le \bar L$,} we directly obtain the desired result.
\end{proof}

Our contraction result for $\pmb H_T$ relies on the spectral properties of the matrix $A_T$. To this end, before presenting this contraction result, we recall some basic results and concepts from the spectral theory of finite matrices that will be used in our proof.

It is straightforward to see that the $(T-1)^{\textrm{th}}$ power of $A_T$, $A^{T-1}_T,$ has all strictly positive entries. Hence, $A_T$ is a nonnegative irreducible matrix. Thus, the Perron-Frobenius theorem is applicable to $A_T$, meaning that the largest eigenvalue of $A_T$ is positive. By $\rho(A_T)>0,$ let us denote the largest eigenvalue of $A_T$ (in magnitude), i.e. the \emph{spectral radius} of $A_T$. As a consequence of Gelfand's formula \cite[Corollary 5.6.14]{horn2012matrix}, we have that $\lim_{n \to \infty} \|A^n_T \|^{1/n}_{\mathrm{op}} = \rho(A_T),$ where $\| \cdot \|_{\mathrm{op}}$ can be an arbitrary operator norm induced by some vector norm over the Euclidean space $\mathbb R^{T-1}$. However, this convergence is only asymptotic for most operator norms. For instance, under the operator norm generated by the $\ell_2$-norm in $\mathbb R^{T-1}$, $\|\cdot\|_{2,\mathrm{op}}$, we have $\| A^n_T\|^{1/n}_{2,\mathrm{op}} > \rho(A_T)$ for any $n$ as $A_T$ has strictly positive entries on the upper triangular part, cf. \cite[Theorem]{goldberg}.

 As a consequence of the Perron-Frobenius theorem for nonnegative irreducible matrices \cite[Theorem 8.4.4]{horn2012matrix}, the matrix $A_T$ admits a unique positive (left and right) eigenvector (up to a constant multiplicity) that corresponds to its largest eigenvalue, which is also unique on its own and corresponds to $\rho(A_T)$. The same also holds for the transpose of $A_T$, $A^*_T$. Since the largest eigenvalue of $A_T$ is a real number, it is also an eigenvalue of $A^*_T$, the transpose of $A_T$.

By $r$ we denote a corresponding positive right eigenvector to $\rho(A_T)$ for the matrix $A^*_T$, which are also called a Perron eigenvector. Note that the map $(\pmb \mu^T,\pmb{\tilde \mu}^T) \mapsto \langle r , V( \pmb \mu^T, \pmb{\tilde \mu}^T)   \rangle =: \|  V(\pmb \mu^T,\pmb{\tilde \mu}^T)\|_r$ is a norm over $\mathcal P(X)^{T-1}$. Since $r$ is a positive vector, for any $\pmb \mu, \tilde{\pmb \mu} \in \mathcal P(X)^{T-1}$, we have the following:
\[
\langle r, V(\pmb H_T(\pmb \mu) , \pmb H_T(\pmb {\tilde \mu}) ) \rangle \leq \langle r, A_T V( \pmb \mu , \pmb {\tilde \mu}) \rangle \leq \langle A^*_Tr, V( \pmb \mu , \pmb {\tilde \mu}) \rangle \le\rho(A_T) \langle r, V(\pmb \mu , \pmb {\tilde \mu} ) \rangle.
\]
This will allow us to show that the operator $\pmb H_T$ is a contraction with contraction rate $\rho(A_T)$. When $\rho(A_T)=1$, using the norm $\| V(\cdot,\cdot)\|_{\mathrm r}$ over $\mathcal P(X)^{T-1}$, we can still have convergence of iterations of the matrix $A_T$, when $\mathcal P(X)$ is a compact subset of a Banach space, using Ishikawa's theorem \cite{MR412909}, for which we will need the norm  $\| V(\cdot,\cdot)\|_{\mathrm r}$. However, we note that, even though $\| A_{T} \|_{\mathrm {op}} \geq \rho(A_T)$ always holds for an arbitrary matrix norm $\| \cdot \|_{\mathrm {op}},$ it does not necessarily satisfy the relation $\| A_{T} \|_{\mathrm {op}} = \rho(A_T) = 1$ as we have noted above. Thus, as Gelfand's formula will be strictly asymptotic at most for most norms, we will not be able to show the convergence of iterations of the operator $\pmb H_T$ due to lack of non expansiveness under arbitrary matrix norms.

With these observations, we now state our main result of this subsection, which provides a sufficient convergence condition for the iterations of the state-flows in finite-time horizon to converge to an MFE.

\begin{theorem}\label{thrm:f2}
    Suppose $\beta <1$. If $\rho(A_T)<1$, then iterations of the operator $\pmb H_T$ in $\{\mu_0\}\times \mathcal P(X)^{T-1}$ converge to a unique fixed point in $(\{\mu_0\}\times \mathcal P(X)^{T-1},\| V(\cdot,\cdot)\|_{\mathrm r})$. If $\rho(A_T) =1$, then iterations of the operator $\lambda \pmb I + (1-\lambda) \pmb H_T$ converge to a fixed point of $\pmb H_T$ for all $\lambda \in (0,1)$ in $(\{\mu_0\}\times \mathcal P(X)^{T-1},\| V(\cdot,\cdot)\|_{\mathrm r})$, where $\pmb I$ is the identity operator over $\{\mu_0\}\times \mathcal P(X)^{T-1}$.
\end{theorem}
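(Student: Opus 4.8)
The plan is to read off, from Lemma~\ref{lem:f4} and the Perron--Frobenius eigenvector $r$, that $\pmb H$ is a self-map of $\mathcal P(X)^T$ which is Lipschitz with constant $\rho(A_T)$ in the norm $\|\cdot\|_{\mathrm r}:=\|\|\cdot\|_{\mathcal P(X)^T}\|_{\mathrm r}$, and then to invoke the Banach contraction principle when $\rho(A_T)<1$ and Ishikawa's theorem \cite{MR412909} when $\rho(A_T)=1$. To set the stage: since $X$ and $A$ are finite, $\mathcal P(X)$ is a compact convex subset of $\mathbb R^{|X|}$, so $D:=\mathcal P(X)^T=\prod_{t=1}^{T}\mathcal P(X)$ is a compact convex subset of the finite-dimensional Banach space $(\mathbb R^{T|X|},\|\cdot\|_{\mathrm r})$ (and $\|\cdot\|_{\mathrm r}$ is a genuine norm because $r$ has strictly positive entries); in particular $(D,\|\cdot\|_{\mathrm r})$ is a complete, indeed compact, metric space. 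Also, $\pmb H$ maps $D$ into $D$, since each component $H_{2,t}(Q,\mu)(\cdot)$ is a probability measure on $X$ by construction. Finally, pairing the term-by-term bound of Lemma~\ref{lem:f4} with $r$ and using $A_T^{*}r=\rho(A_T)r$ — this is precisely the inequality displayed just before the statement — gives $\|\pmb H(\pmb\mu)-\pmb H(\pmb{\tilde\mu})\|_{\mathrm r}\le\rho(A_T)\|\pmb\mu-\pmb{\tilde\mu}\|_{\mathrm r}$ for all $\pmb\mu,\pmb{\tilde\mu}\in D$; in particular $\pmb H$ is continuous on $(D,\|\cdot\|_{\mathrm r})$.

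If $\rho(A_T)<1$, the last inequality says that $\pmb H$ is a $\rho(A_T)$-contraction of the complete metric space $(D,\|\cdot\|_{\mathrm r})$ into itself, so the Banach fixed-point theorem gives a unique fixed point $\pmb\mu^{\star}\in D$ together with convergence of the Picard iterates $\pmb\mu^{k+1}=\pmb H(\pmb\mu^{k})$ to $\pmb\mu^{\star}$ at geometric rate $\rho(A_T)$, which is the first assertion.

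If $\rho(A_T)=1$, the inequality only yields nonexpansiveness of $\pmb H$ in $\|\cdot\|_{\mathrm r}$, which by itself does not force the Picard iterates to converge, so instead I would run the averaged iteration $\pmb\mu^{k+1}=\lambda\pmb\mu^{k}+(1-\lambda)\pmb H(\pmb\mu^{k})=(\lambda\pmb I+(1-\lambda)\pmb H)(\pmb\mu^{k})$ and appeal to Ishikawa's theorem \cite{MR412909}. Its hypotheses hold: in the Banach space $(\mathbb R^{T|X|},\|\cdot\|_{\mathrm r})$, the set $D$ is closed and convex, $\pmb H:D\to D$ is nonexpansive and continuous with $\overline{\pmb H(D)}\subseteq D$ compact (since $D$ is compact), the step parameters $\alpha_k\equiv 1-\lambda$ satisfy $0\le\alpha_k\le 1-\lambda<1$ and $\sum_k\alpha_k=\infty$, and the iterates $\pmb\mu^{k+1}=(1-\alpha_k)\pmb\mu^{k}+\alpha_k\pmb H(\pmb\mu^{k})$ remain in $D$ by convexity. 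Ishikawa's theorem then gives that $(\pmb\mu^{k})$ converges to a fixed point of $\pmb H$ (note $\mathrm{Fix}(\lambda\pmb I+(1-\lambda)\pmb H)=\mathrm{Fix}(\pmb H)$, and a fixed point exists in any case by Schauder's theorem as $D$ is compact convex and $\pmb H$ continuous), which is the second assertion; as noted before, such a fixed point corresponds to a finite-horizon MFE of MFG$_{\mathrm T}$.

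The step I expect to be the main obstacle is the case $\rho(A_T)=1$. It is essential not to work in the product total-variation norm $\|\cdot\|_{T,TV}$: there, Lemma~\ref{lem:f4} supplies only a term-by-term matrix bound and $\pmb H$ need not be nonexpansive, so one must pass to the weighted norm $\|\cdot\|_{\mathrm r}$ built from the Perron--Frobenius eigenvector of $A_T^{*}$, which is exactly where nonexpansiveness is recovered. Since $\|\cdot\|_{\mathrm r}$ is a weighted $\ell_1$-type norm, the ambient space is neither strictly nor uniformly convex, so the usual Krasnoselskii--Mann convergence theorems for averaged iterations do not apply; what rescues the argument is the compactness of $\mathcal P(X)^T$ (a consequence of the finiteness of $X$), under which Ishikawa's theorem upgrades the asymptotic regularity of the averaged iteration to genuine norm convergence to a fixed point.
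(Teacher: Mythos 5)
Your proposal is correct and follows essentially the same route as the paper: for $\rho(A_T)<1$ it pairs the term-by-term bound of Lemma~\ref{lem:f4} with the positive Perron--Frobenius eigenvector $r$ to get a $\rho(A_T)$-contraction in the weighted norm $\|\,\|\cdot\|_{\mathcal P(X)^T}\|_{\mathrm r}$ and applies Banach's fixed-point theorem, and for $\rho(A_T)=1$ it uses nonexpansiveness in that same norm together with compactness of $\mathcal P(X)^T$ (the paper realizes this inside the product of signed-measure spaces under total variation, you inside $\mathbb R^{T|X|}$, which is the same thing in finite dimensions) to invoke Ishikawa's theorem for the averaged operator $\lambda\pmb I+(1-\lambda)\pmb H$, with existence of a fixed point secured by Schauder. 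Your explicit verification of Ishikawa's step-size and convexity hypotheses is a harmless elaboration of what the paper leaves implicit.
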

\begin{proof}
    We will consider the case $\rho(A_T)<1$ first. Let $\pmb{\mu},\pmb{\tilde \mu} \in \{\mu_0\}\times\mathcal P(X)^{T-1}$. Using Lemma \ref{lem:f4} and the discussion above, we have
    \begin{align*}
    \langle r , V( \pmb{H}_T(\pmb{\mu}^T) , \pmb{H}_T(\pmb{\tilde \mu}^T) )\rangle &
    \leq \langle r , A_T V( \pmb{\mu}^T , \pmb{\tilde \mu}^T )\rangle = \rho(A_T) \langle r , V( \pmb{\mu}^T , \pmb{\tilde \mu}^T )\rangle.
    \end{align*}  
    Thus, using the Banach contraction mapping theorem with the norm $\| V(\cdot,\cdot) \|_{\mathrm r}$ over $\{\mu_0\}\times \mathcal P(X)^{T-1}$, and the operator $\pmb H_T,$ we see that $\pmb H_T$ has a unique fixed point and its iterations converge to that fixed point. The uniqueness of the fixed point also follows from the Banach fixed-point theorem in this case.

    Let us consider the case $\rho(A_T)=1$ now. We will use \cite[Theorem 1]{MR412909} to establish the convergence of the operator $\lambda \pmb I + (1- \lambda)\pmb H_T,$ for which we need to justify the existence of a fixed point first. As a consequence of Assumption \ref{assump:1}-(c), for all $\pmb \mu^T$, for all $t,$ $Q^{\pmb \mu^T}_t$ is bounded by $\frac{M}{1-\beta}$. Since $X$ and $A$ are discrete, it also follows that all such $Q^{\pmb \mu^T}_t$ are also Lipschitz continuous with the same Lipschitz constants. Thus, for all $\pmb {\mu}^T$, the families $(Q^{\pmb \mu^T}_t)_{t=0}^T$ all belong to a compact set of functions as a consequence of the Arzel\`a-Ascoli theorem. The space $\mathcal P(X)$ is compact under the total variation norm as $X$ is a finite space. Therefore, since the operator $\pmb H_T$ is continuous, and maps a compact set to itself, it admits a fixed point by Schauder's fixed point theorem. 
    
    Since the space of finite signed measures is a Banach space under the total variation norm, we have that $\mathcal P(X)$ is a compact subset of the space of finite signed measures when $X$ finite. Let $\mathcal M(X)$ denote the space of finite signed measures. Consequently, $\prod_{t=1}^{T-1} \mathcal M(X)$ is also a Banach space under the norm $\| V(\cdot,\cdot)\|_r.$ It then follows that $\mathcal P(X)^{T-1}$ is a compact subset of a Banach space under the norm $\| V(\cdot,\cdot)\|_{\mathrm r}.$ Then, applying \cite[Theorem 1]{MR412909} to $\lambda \pmb I + (1-\lambda) \pmb H_T$ under the norm $\| V(\cdot,\cdot)\|_{\mathrm r}$, we see that iterations of the operator $\lambda \pmb I + (1-\lambda) \pmb H_T$ converge to a fixed point of it under $\| V(\cdot,\cdot)\|_{\mathrm r}.$ The fixed point of $\lambda \pmb I + (1-\lambda) \pmb H_T$ also gives us a fixed point for $\pmb H_T$, and this completes the proof.
\end{proof}

Since we endow each section of the product space $\mathcal P(X)^{T-1}$ with the norm $\| \cdot \|_{\mathrm{TV}}$, and that every norm on $\mathbb R^{T-1}$ is equivalent to every other norm, we conclude convergence of the iterations of $\pmb H_T$ taken in the norm $\| V(\cdot,\cdot)\|$, for an arbitrary norm $\| \cdot \|$ on $\mathbb R^{T-1}$, also ensures the convergence of the iterations of $\pmb H_T$ (resp. $\lambda \pmb I+(1-\lambda)\pmb H_T$) whenever $\rho(A_T) <1$ (resp. $\rho(A_T)=1$). Recall that if $\pmb \mu^T \in \{\mu_0\}\times \mathcal P(X)^{T-1}$ is a fixed point of $\pmb H_T$, then $\pmb \mu^T$ induces a finite-horizon MFE. Thus, the condition $\rho(A_T)<1$ also implies that there exists a unique finite-horizon MFE for the system $(X,\mathcal P(A),C+\Omega,P,\mu_0,T)$ for any given $\mu_0 \in \mathcal P(X)$.

Theorem \ref{thrm:f2} is qualitative in nature and, as stated, does not provide enough information to determine whether the contraction result in the finite-horizon case is weaker than in the infinite-horizon case. In the next subsection, we study the spectral properties of $A_T$ in order to derive sharp quantitative asymptotics for $\rho(A_T)$.

\subsection{Spectral Properties of $A_T$}\label{sect:b}

In the previous subsection, we showed that $\rho(A_T)<1$ serves a qualitative contraction condition for $\pmb H_T$, whose fixed points induce a finite-horizon MFE to the regularized MFG $(X,\mathcal P(A),C+\Omega,P,\mu_0,T)$. In this subsection, we will provide tight lower and upper bounds for $\rho(A_T)$ for sufficiently large $T$. Asymptotic behavior of $\rho(A_T)$ and the structure of Perron eigenvectors of $A_T$ will be essential in our finite-time error bounds between finite-horizon and infinite-horizon MFE. Furthermore, we will provide a characterization for the positive eigenvectors that correspond to $\rho(A_T)$, which will be used for constructing Lyapunov functions to obtain the rate of convergence  between finite-horizon MFE and infinite-horizon MFE.  Our finite-time error bounds will also provide us a new uniqueness condition for infinite-horizon non-stationary MFE; thus, it is desirable to have a sharp quantitative description of the asymptotes of $(\rho(A_T))_T$.

The main result of this subsection is the following result, providing tight bounds on $\rho(A_T)$:
\begin{theorem}\label{thrm:main}
Suppose that Assumption \ref{assump:1} holds. Suppose further that $\beta<1$. Then, the following results hold:
\begin{itemize}
    \item[a)]  For any sufficiently large $T$, we must have
\begin{align*}
    &\sqrt{ \hat K \beta} \cos \left( \frac{\pi}{T} \right) + \sqrt{ \left( \hat K\cos^2\left( \frac{\pi}{T} \right)-\bar K\right) \beta} \leq \sqrt{\rho(A_T)} 
    \\&\leq \sqrt{ \hat K \beta} \cos \left( \frac{\pi}{2T-1} \right) + \sqrt{ \left( \hat K\cos^2\left( \frac{\pi}{2T-1} \right)-\bar K\right) \beta},
\end{align*}
{where $\hat K :=\bar K + \frac{K_1}{ \rho} \bar L$.}
Moreover, for any sufficiently large $T$, for some $\theta_T \in (0,2\pi)$ we have $\rho(A_T) = -\bar K \beta + 2 \sqrt{\rho(A_T) \beta \hat K} \cos( \theta_T).$ Then, for any sufficiently large $T,$ we must have
\[
\cos\left( \frac{\pi}{T}\right) \leq \cos( \theta_T) \leq \cos\left( \frac{\pi}{2T-1}\right).
\]
\item[b)] Let $\rho(A_T) = -\bar K \beta + 2 \sqrt{\rho(A_T) \beta \hat K} \cos( \theta_T).$ Then, for sufficiently large $T$, the right Perron eigenvectors of $A$ are of the form $v=(v_1,\cdots,v_T)$, where
\[
v_j = \left( \sqrt\frac{\hat K}{\rho(A_T)\beta}\right)^j \sin\left(j\theta_T\right).
\]
\end{itemize}
\end{theorem}

The proof of Theorem \ref{thrm:main} is based on a contradiction argument and therefore establishes only the existence of a threshold $T$ for which the above bounds hold. The main steps are as follows. First, we construct a perturbed tridiagonal matrix that has $\rho(A_T)$ as an eigenvalue and shares the same Perron eigenvectors as $A_T$. Next, using the recursion relations induced by this matrix, we derive a polynomial that characterizes its eigenvalues, and then analyze the asymptotic behavior of $\rho(A_T)$ through this polynomial. The construction of this polynomial depends essentially on the corresponding Perron eigenvectors of $A_T$.

Before turning to the proof of Theorem \ref{thrm:main}, we make several observations about its implications. We begin by comparing Theorem \ref{thrm:main} with the contraction conditions for stationary MFGs. To this end, we first establish some simple upper bounds on $\rho(A_T)$. Applying Gershgorin's Circle Theorem \cite[Corollary 6.1.5]{horn2012matrix} to $A_T$, one can obtain the following bound on the spectral radius of $A_T$:
\begin{equation}\label{eq:ref1}
\rho(A_T) \leq \max \left(\bar K + \frac{K_1}{\rho}\bar L \frac{1-\beta^{T-1}}{1-\beta}+\frac{K_1}{\rho}\bar L \beta^T, \frac{K_1}{\rho}\bar L \frac{1-\beta^{T}}{1-\beta}+\frac{K_1}{\rho}\bar L \beta^T\right),
\end{equation}
which shows that the contraction rate $\rho(A_T)$ in the worst case is comparable to the ones used in the literature for stationary MFE \cite{anahtarci2023q}.
Indeed, when $T$ is sufficiently large, since $\beta <1,$ we obtain the following time-horizon independent bound on $\rho (A_T)$ for all $T$:
\begin{equation}\label{eq:ref2}
\rho(A_T) < \bar K + \frac{K_1}{\rho}\frac{\bar L}{1-\beta}.
\end{equation}
In general, Gershgorin's Circle Theorem is tight in the sense that the bound we have found in \eqref{eq:ref2} can be asymptotically optimal for some parameters $\bar K,\bar L,K_1,\rho,\beta$. However, Theorem \ref{thrm:main} shows that the bound \eqref{eq:ref1} is usually not sharp. {Indeed, Theorem \ref{thrm:main} implies that
\begin{equation}\label{eq:ref3}
\limsup_{T \to \infty} \sqrt{\rho(A_T)} = \sqrt{\beta}\left(\sqrt{\hat K}+\sqrt{\hat K-\bar K}\right).
\end{equation}
Combining \eqref{eq:ref2} and \eqref{eq:ref3}, we obtain
\[
\beta\left(\sqrt{\hat K}+\sqrt{\hat K-\bar K}\right)^2 \le \bar K + \frac{K_1}{\rho}\frac{\bar L}{1-\beta},
\]
which can be shown to be strict in most cases (see Remark \ref{rem:1} below for an explicit example).}

As a consequence of \eqref{eq:ref2}, we have the following corollary, which could have been obtained if we had treated the vectors in Lemma \ref{lem:f4} under the max norm.

\begin{corollary}\label{cor:ask}
    Suppose that Assumption \ref{assump:1} holds. If $\bar K + \frac{K_1}{\rho}\frac{\bar L}{1-\beta} \leq 1$, then there exists a unique fixed point of $\pmb H_T$ in $\{\mu_0\}\times\mathcal P(X)^{T-1}$ for all sufficiently large $T$ values.
\end{corollary}
\begin{proof}
    The result directly follows from Theorem \ref{thrm:f2} and \eqref{eq:ref2}.
\end{proof}

Thus, for sufficiently large $T$, the map $\pmb H_T$ is a contraction under the condition $ \bar K + \frac{K_1}{\rho}\frac{\bar L}{1-\beta} \leq 1$, which is the same contraction condition for the mean-field equilibrium operator for stationary MFE presented in \cite[Theorem 1]{aydin2023robustness}. In Subsection \ref{sect:3.2}, we will also show that fixed-point iteration algorithm with our methods results in the contraction condition $\bar K + \frac{K_1}{\rho}\frac{\bar L}{1-\beta} <1$ for the infinite-horizon non-stationary MFG. As expected, Corollary \ref{cor:ask} combined with Theorem \ref{thrm:main} shows that the contraction condition we have in finite-horizon games is more relaxed than that in the infinite-horizon case.

As stated, Theorem \ref{thrm:main} only provides the aforementioned bounds for sufficiently large $T$. The last observation that we will make is that the asymptotic bounds obtained via Theorem \ref{thrm:main} are indeed applicable for all $T$.

We will make some simple observations on the behavior of the map $T \mapsto\rho(A_T)$ to make a conclusion about the contractivity of $\pmb H_T$ for all $T$ based on Theorem \ref{thrm:main}. We note that the spectral radius of $A_T$ increases as $T$ increases due to the newly added terms. Indeed, if we extend $A_T$ to a $T \times T$ dimensional matrix as 
\[
\tilde A_T = \begin{bmatrix}
    A_T & 0 \\
    0& 0
\end{bmatrix},
\]
where $0$'s are matrices of appropriate dimensions with entries all zero, then it is easy to see that the eigenvalues of $\tilde A_T$ and $A_T$ are the same. Furthermore, for any nonnegative vector $v \in \mathbb R^{\mathrm T}$, we have $\| \tilde A^n_{T}v\|_1 \leq \|A^n_{T+1}v\|_1$ for all $n$. Hence, if we take $v$ as $A_T$'s positive right eigenvector that corresponds to $\rho(A_T)$ and set $\tilde v = \begin{bmatrix}
    v \\
    0
\end{bmatrix}$, then we obtain that $\rho(A_T)^n \| \tilde v\|_1 \leq \| A^n_{T+1} \|_{1,\mathrm {op}} \|\tilde v\|_1$ and hence, $\rho(A_T) \leq \rho(A_{T+1})$ as a consequence of Gelfand's formula. Thus, if one takes $T \to \infty$ in the inequality presented in Theorem \ref{thrm:main}, one obtains an upper bound for $\rho(A_T)$ that is applicable for all $T$. {This observation, combined with \eqref{eq:ref3} and Corollary \ref{cor:ask}, yields the following result:

\begin{corollary}\label{cor:22}
Suppose Assumption \ref{assump:1} holds. If
\[
\sqrt\beta\left( \sqrt{\hat K}+\sqrt{\hat K-\bar K}\right)<1,
\]
then MFG$_{\mathrm T}$ is contractive for all \(T<\infty\).
\end{corollary}

\begin{proof}
This follows from the discussion above.
\end{proof}
}

\begin{remark}\label{rem:1}
    By Theorem \ref{thrm:main} and Corollary \ref{cor:22}, we have $\rho(A_T)<1$ for all $T \in \mathbb N$ if, and only if, it holds that
    \[
    \sqrt{\beta}\left(\sqrt{\hat K}+\sqrt{\hat K-\bar K} \right)<1.
    \]
    Let $\beta = 0.9833,$ and $\bar K = 0.6, \frac{K_1\bar L }{\rho}=0.01$. Then $\sqrt{\hat K \beta } + \sqrt{(\hat K-\bar K)\beta} \sim 0.873$, and thus MFG$_{\mathrm T}$ is contractive for all $T \in \mathbb N$. However, the contraction condition for MFG$_{\mathrm s}$ found in \cite{anahtarci2023q}, which is $\bar K + \frac{\bar LK-1}{\rho(1-\beta)}<1,$ does not hold in this case. Indeed, we have
     $\bar K + \frac{K_1}{\rho}\frac{\bar L}{1-\beta} \sim 1.199$ in this configuration. 
    This shows that $\rho(A_T)<1$ is possible for all $T$ while the stationary MFG fails to be contractive. {In Subsection \ref{sect:3.2}, we will also show that when the spectral analysis done in this section is extended to the $T=\infty$ case, one obtains a contraction condition equivalent to $\bar K + \frac{K_1}{\rho}\frac{\bar L}{1-\beta} <1$ for MFG$_{\mathrm{ns}}$ over the space of bounded sequences, $\ell_{\infty}$.} 
\end{remark}

We now proceed to the proof of Theorem \ref{thrm:main}. To understand the asymptotic behavior of the eigenvalue $\rho(A_T)$, as mentioned, we will study the eigenvalues of a perturbed tridiagonal matrix $T(\rho(A_T))$ that has $\rho(A_T)$ as an eigenvalue. This will allow us to characterize a polynomial that has $\rho(A_T)$ as a root of a polynomial that is constructed by using the recursive relation that arises from $T(\rho(A_T))$. The following lemma  gives us a construction for such $T(\rho(A_T))$.

\begin{lemma}\label{lem:g1}
    Suppose $\beta <1.$ If $A_Th=\rho(A_T)h$, then we also have
    \begin{align*}
     T(\rho(A_T))h :=   \begin{bmatrix}
    -\bar K \beta & \rho(A_T)\beta & 0 & \cdots & 0 & 0\\
    \hat K  & - \bar K \beta & \rho(A_T)\beta & \cdots & 0 & 0\\
    0& \hat K & -\bar K \beta & \cdots &0 & 0\\
    \vdots & \vdots & \vdots & \ddots & \vdots & \vdots \\
    0 & 0 & 0 & \cdots &-\bar K \beta & \rho(A_T)\beta\\
    0 & 0 & 0 & \cdots & \hat K  & - \bar K \beta + r
\end{bmatrix} h = \rho(A_T)h,
    \end{align*}
    where $r := \left( \bar K \beta + \frac{K_1}{\rho} \bar L \beta\right)=\hat K \beta$. Furthermore, eigenvectors of $T(\rho(A_T))$ that correspond to $\rho(A_T)$ are the Perron eigenvectors of $A_T$.
\end{lemma}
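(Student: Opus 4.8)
\emph{Proof proposal.} The statement is purely linear-algebraic: given that $h$ solves $A_T h = \rho(A_T)h$, I want to exhibit $h$ as a solution of $T(\rho(A_T))h = \rho(A_T)h$. So the plan is to write $\lambda := \rho(A_T)$ and expand the eigen-relation $A_T h = \lambda h$ into its $T$ scalar component equations, reading the entries of $A_T$ off the matrix displayed in Lemma \ref{lem:f4}. Concretely, row $1$ reads $\frac{\bar L K_1}{\rho}\sum_{j=1}^{T-1}\beta^{j}h_j + \frac{L_1 K_1}{\rho}\beta^{T}h_T = \lambda h_1$; for $2 \le t \le T-1$, row $t$ reads $\hat K h_{t-1} + \frac{\bar L K_1}{\rho}\sum_{j=t}^{T-1}\beta^{\,j-t+1}h_j + \frac{L_1 K_1}{\rho}\beta^{\,T-t+1}h_T = \lambda h_t$; and row $T$ reads $\hat K h_{T-1} + \frac{L_1 K_1}{\rho}\beta\,h_T = \lambda h_T$. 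The whole content of the lemma is that these $T$ equations are equivalent (for this particular $h$) to the $T$ equations coded by $T(\lambda)$.

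The key manoeuvre, which I would carry out next, is that every one of these rows, apart from its subdiagonal $\hat K$ entry, is a geometric-in-$\beta$ tail, so the tails telescope if I replace row $t$ by $(\text{row } t) - \beta\cdot(\text{row } t+1)$ for each $1 \le t \le T-1$. Performing this subtraction, $\beta$ times the tail of row $t+1$ reproduces exactly the tail of row $t$ except for its leading term $\frac{\bar L K_1}{\rho}\beta h_t$, while the two $\frac{L_1 K_1}{\rho}\beta^{\,T-t+1}h_T$ contributions cancel identically; on the right-hand side I am left with $\lambda h_t - \lambda\beta h_{t+1}$. Using the definition $\hat K = \bar K + \frac{\bar L K_1}{\rho}$ to simplify $\frac{\bar L K_1}{\rho}\beta - \hat K\beta = -\bar K\beta$, the subtracted equation collapses to $\hat K h_{t-1} - \bar K\beta h_t + \lambda\beta h_{t+1} = \lambda h_t$ for $2 \le t \le T-1$, and to $-\bar K\beta h_1 + \lambda\beta h_2 = \lambda h_1$ for $t=1$ (the $\hat K h_{t-1}$ term simply being absent there). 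Those are precisely rows $1$ through $T-1$ of $T(\lambda)h = \lambda h$. The already bidiagonal row $T$ of $A_T h = \lambda h$ is exactly row $T$ of $T(\lambda)h = \lambda h$ once one records the scalar identity $-\bar K\beta + r = \frac{L_1 K_1}{\rho}\beta$, which is immediate from $r = \bar K\beta + \frac{K_1}{\rho}L_1\beta$.

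I do not expect a genuine obstacle here; the only thing to be careful about is index bookkeeping — keeping track of the fact that row $1$ carries no subdiagonal $\hat K$ entry, that the geometric tail in row $T$ is empty, and that in the subtraction the last-column entries (which carry $L_1$, not $\bar L$) must be matched between row $t$ and $\beta$ times row $t+1$ so that they annihilate rather than leave a residue. The one mild subtlety worth flagging is that the $\beta$-exponents in $A_T$ should be taken from the matrix display in Lemma \ref{lem:f4} rather than from the telescoping sketch in its proof; once the entries are pinned down, each of the $T-1$ subtractions is a one-line computation, and it is the identity $\hat K = \bar K + \frac{\bar L K_1}{\rho}$ that makes every off-diagonal coefficient collapse to exactly $-\bar K\beta$.
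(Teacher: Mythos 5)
Your proposal is correct and matches the paper's own argument: the paper likewise expands the rows of $A_Th=\rho(A_T)h$ and eliminates the geometric tails by combining row $t$ with $\beta$ times row $t+1$, using $\hat K=\bar K+\tfrac{\bar LK_1}{\rho}$ to produce the $-\bar K\beta$ diagonal entries and noting that the last row of $A_T$ already coincides with the last row of $T(\rho(A_T))$ via $-\bar K\beta+r=\tfrac{L_1K_1}{\rho}\beta$. Your bookkeeping (no subdiagonal term in row $1$, cancellation of the $L_1$-weighted last-column terms) is exactly the content of the paper's ``similarly'' step, so there is nothing missing.
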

\begin{proof}
    See Appendix \ref{sect:lem:g1}.
\end{proof}

Let $(e_i)_i$ be the canonical basis of $\mathbb R^{T-1}$. Let $e^{\top}_i$ denote the transpose of $e_i$. Then note that $e_{T-1}e^{\top}_{T-1}$ is a $(T-1)\times (T-1)$ dimensional matrix of all $0$'s except the $(T-1,T-1)$ entry. Heuristically, we expect $\rho(A_T)$ to be the largest positive eigenvalue of the tridiagonal matrix $T(\rho(A_T)) - re_{T-1}e_{T-1}^{\top}$ as $T \to \infty$, where $(e_i)_i$ is the canonical basis of $\mathbb R^{T-1}$ due to the increasing number of terms. In what follows, this will be proved rigorously. Furthermore, even if we treat $\rho(A_T)$ as the largest positive eigenvalue of $T(\rho(A_T)) - re_{T-1}e_{T-1}^{\top},$ there will be two possible choices of $\rho(A_T)$ asymptotically. Our analysis will show that only one of these choices must hold asymptotically.

\begin{remark}
The heuristic mentioned above is false in general; see \cite{yueh2005eigenvalues}. As shown in \cite{yueh2005eigenvalues}, establishing this heuristic typically requires $r$ to be sufficiently small. In our setting, the analysis is more delicate because the perturbed tridiagonal matrix depends on one of its own eigenvalues. Nevertheless, we will show that a similar procedure can still be applied in our case.
\end{remark}

We prove the lower and upper bounds for $\rho(A_T)$ presented in Theorem \ref{thrm:main} separately. To establish the lower bound, we use a perturbation argument on $T(\rho(A_T))$ by minorizing it with the tridiagonal matrix $T(\rho(A_T)) - r e_{T-1} e_{T-1}^{\top}$. To obtain the desired lower bound in Theorem \ref{thrm:main}, we show that $\rho(A_T)$ is the largest positive eigenvalue of $T(\rho(A_T))$, and then prove that the largest positive eigenvalue of $T(\rho(A_T)) - r e_{T-1} e_{T-1}^{\top}$ is smaller than that of $T(\rho(A_T))$ and conclude by using the explicit formula for the eigenvalues of $T(\rho(A_T)) - r e_{T-1} e_{T-1}^{\top}$.

To do this comparison, first, we show that $\rho(A_T)$ is the largest positive eigenvalue of $T(\rho(A_T))$. Note that perturbations of the form $T(\rho(A_T)) + a I$ shift the eigenvalues of $T(\rho(A_T))$ by $a$. Furthermore, for any $\varepsilon>0$, the matrix $T(\rho(A_T)) + (\bar K \beta  + \varepsilon)I$ is a nonnegative irreducible matrix, and hence satisfies the conditions of the Perron-Frobenius theorem. Since the space of right Perron eigenvectors is a one-dimensional vector space by the Perron- Frobenius theorem, we obtain that $\rho(A_T)$ is the largest positive eigenvalue of $T(\rho(A_T))$:

\begin{lemma}\label{lem:g2}
    $\rho(A_T)$ is the largest positive eigenvalue of $T(\rho(A_T))$.
\end{lemma}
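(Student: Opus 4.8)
The plan is to leverage the Perron–Frobenius structure already set up in the text. Fix $T$ and write $\lambda := \rho(A_T)$. By Lemma \ref{lem:g1}, $\lambda$ is an eigenvalue of the tridiagonal matrix $T(\lambda)$, with a right eigenvector $h$ obtained from a right eigenvector of $A_T$ associated to $\lambda$. The key point, noted just before the statement, is that for any $\epsilon > 0$ the matrix $T(\lambda) + (\bar K \beta + \epsilon) I$ is nonnegative (its off-diagonal entries $\hat K$ and $\lambda\beta$ are nonnegative since $\lambda = \rho(A_T) \geq 0$, and its diagonal entries become $\epsilon$ and $\epsilon + r$, which are positive) and irreducible (it is tridiagonal with nonzero sub- and super-diagonals). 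Hence Perron–Frobenius applies: $T(\lambda) + (\bar K\beta + \epsilon)I$ has a simple, strictly positive, real spectral-radius eigenvalue, and — crucially — this is the \emph{only} eigenvalue admitting a nonnegative (equivalently, strictly positive) eigenvector.

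First I would show that the eigenvector $h$ from Lemma \ref{lem:g1} can be taken strictly positive. Since $A_T$ is irreducible (this is established in the text via $A_T^{T-1} > 0$), its Perron eigenvalue $\lambda = \rho(A_T)$ has a strictly positive right eigenvector; this is precisely the $h$ we feed into Lemma \ref{lem:g1}, so $h > 0$. Then $h$ is a strictly positive eigenvector of $T(\lambda)$ for eigenvalue $\lambda$, and therefore a strictly positive eigenvector of the shifted nonnegative irreducible matrix $T(\lambda) + (\bar K\beta + \epsilon)I$ for eigenvalue $\lambda + \bar K\beta + \epsilon$. By the uniqueness clause of Perron–Frobenius for irreducible nonnegative matrices, $\lambda + \bar K\beta + \epsilon$ must equal the spectral radius of $T(\lambda) + (\bar K\beta + \epsilon)I$, i.e. it is the largest eigenvalue in magnitude, and in particular the largest real (hence largest positive) eigenvalue. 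Since shifting by the scalar $\bar K\beta + \epsilon$ is an order-preserving bijection on the spectrum, $\lambda$ is the largest real eigenvalue of $T(\lambda)$; as $\lambda \geq 0$, it is the largest positive eigenvalue of $T(\lambda)$, which is the claim. (The $\epsilon$ can be dropped at the end, or one simply works with a single convenient $\epsilon$, e.g. $\epsilon = 1$, throughout.)

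The main obstacle — really the only place care is needed — is establishing that the relevant eigenvector is strictly positive, since Perron–Frobenius's uniqueness-of-positive-eigenvector statement is exactly what converts "$\lambda$ is \emph{an} eigenvalue with \emph{some} positive eigenvector" into "$\lambda$ is \emph{the} dominant one." I would handle this by tracing the eigenvector through Lemma \ref{lem:g1}: the map there sends a right eigenvector $h$ of $A_T$ to the same vector $h$ viewed as an eigenvector of $T(\lambda)$, so positivity is inherited directly from the Perron eigenvector of the irreducible matrix $A_T$. One should double-check the edge behavior in the recursion of Lemma \ref{lem:g1} — that no division or sign issue forces a coordinate of $h$ to vanish — but since $h$ is literally unchanged by the lemma, this is immediate. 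A minor secondary point is confirming $\hat K = \bar K + \frac{K_1}{\rho}\bar L > 0$ and $\lambda\beta \geq 0$ so that the shifted matrix is genuinely irreducible and nonnegative; both are clear from the definitions ($\bar K, \bar L, K_1, \rho, \beta$ all nonnegative, and $\beta < 1$ is assumed). With these in hand the proof is essentially a one-line invocation of Perron–Frobenius after the shift.
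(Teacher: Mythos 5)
Your argument is correct and is essentially the paper's own proof: both shift $T(\rho(A_T))$ by $(\bar K\beta+\epsilon)I$ to get a nonnegative irreducible matrix, feed in the strictly positive Perron eigenvector of the irreducible $A_T$ via Lemma \ref{lem:g1}, and conclude from the Perron--Frobenius uniqueness of positive eigenvectors that $\rho(A_T)$ must be the largest positive eigenvalue. The only cosmetic difference is that the paper phrases this as a contradiction with a hypothetical larger eigenvalue $k$, whereas you argue directly; the substance is identical.
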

\begin{proof}
    See Appendix \ref{sect:lem:g2}.
\end{proof}

Using Lemma \ref{lem:g2}, in the next proposition, we will provide bounds for $\sqrt{\rho(A_T)}$ by using the minorization of $T(\rho(A_T)) - r e_{T-1} e_{T-1}^{\top}$ on $T(\rho(A_T))$, which is a (true) tridiagonal matrix and for which a closed-form expression is available for the corresponding eigenvalues \cite[Theorem 2.4]{bottcher2005spectral}. These bounds later will be used to establish tight asymptotics for $(\sqrt{\rho(A_T)})_{T \ge 0}$.

\begin{proposition}\label{prop:g1}
    Suppose $\beta <1$. For any natural number $T$ such that $\hat K \cos^2\left(\frac{\pi}{ T+1}\right)-\bar K >0,$ we have either
    \begin{equation}\label{eq:g23}
        \sqrt{\hat K \beta}\cos\left(\frac{\pi}{T+1}\right)+\sqrt{ \left(\hat K \cos^2\left(\frac{\pi}{T+1}\right) - \bar K \right)\beta }  \leq \sqrt{\rho(A_T)}
    \end{equation}
    or 
    \begin{equation}\label{eq:g24}
\sqrt{\rho(A_T)} \leq \sqrt{\hat K \beta}\cos\left(\frac{\pi}{T+1}\right)-\sqrt{ \left(\hat K \cos^2\left(\frac{\pi}{T+1}\right) - \bar K \right)\beta }.
\end{equation}
\end{proposition}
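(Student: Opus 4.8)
The plan is to exploit the structure uncovered in Lemmas~\ref{lem:g1} and~\ref{lem:g2}: $\rho(A_T)$ is an eigenvalue of the tridiagonal matrix $T(\rho(A_T))$, and, being positive and the largest positive eigenvalue, it is the \emph{top} eigenvalue of $T(\rho(A_T))$. Write $B:=T(\rho(A_T))$ and let $B_0$ be obtained from $B$ by removing the correction $r$ from the bottom--right entry, so that $B_0$ is the tridiagonal \emph{Toeplitz} matrix with constant diagonal $-\bar K\beta$, constant superdiagonal $\rho(A_T)\beta$, and constant subdiagonal $\hat K$, while $B=B_0+r\,e_Te_T^{\top}$ with $r=\bar K\beta+\frac{K_1}{\rho}L_1\beta\ge 0$. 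Assuming $\beta>0$ (the case $\beta=0$ is trivial, since then $A_T$ is strictly lower triangular, $\rho(A_T)=0$, and both displayed bounds reduce to $0\le 0$), every off-diagonal product in $B$ and $B_0$ equals $\rho(A_T)\beta\hat K>0$, so a diagonal similarity $D=\mathrm{diag}\big(1,\sqrt{\hat K/(\rho(A_T)\beta)},\dots\big)$ turns $B$ into a \emph{symmetric} matrix $\widehat B=\widehat B_0+r\,e_Te_T^{\top}$, where $\widehat B_0:=D^{-1}B_0D$ is the symmetric tridiagonal Toeplitz matrix with diagonal $-\bar K\beta$ and off-diagonal entries $\sqrt{\rho(A_T)\beta\hat K}$; similarity preserves the spectrum, so $\rho(A_T)=\lambda_{\max}(\widehat B)$.

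Next I would invoke the classical spectrum of a symmetric tridiagonal Toeplitz matrix: the $T\times T$ matrix with diagonal $\alpha$ and off-diagonal $\gamma>0$ has eigenvalues $\alpha+2\gamma\cos\!\big(\tfrac{k\pi}{T+1}\big)$, $k=1,\dots,T$, whence $\lambda_{\max}(\widehat B_0)=-\bar K\beta+2\sqrt{\rho(A_T)\beta\hat K}\,\cos\!\big(\tfrac{\pi}{T+1}\big)$. Since $r\,e_Te_T^{\top}\succeq 0$, monotonicity of eigenvalues under positive semidefinite perturbations gives $\lambda_{\max}(\widehat B)\ge\lambda_{\max}(\widehat B_0)$, and combining with $\rho(A_T)=\lambda_{\max}(\widehat B)$ yields the scalar inequality
\[
\rho(A_T)\ \ge\ -\bar K\beta+2\sqrt{\rho(A_T)\beta\hat K}\,\cos\!\Big(\tfrac{\pi}{T+1}\Big).
\]

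To finish, set $x:=\sqrt{\rho(A_T)}>0$; the last inequality reads $q(x):=x^2-2\sqrt{\hat K\beta}\cos\!\big(\tfrac{\pi}{T+1}\big)x+\bar K\beta\ge 0$. Under the hypothesis $\hat K\cos^2\!\big(\tfrac{\pi}{T+1}\big)-\bar K>0$ the discriminant $4\beta\big(\hat K\cos^2(\tfrac{\pi}{T+1})-\bar K\big)$ is strictly positive, so $q$ has the two real roots $x_{\pm}=\sqrt{\hat K\beta}\cos\!\big(\tfrac{\pi}{T+1}\big)\pm\sqrt{\beta\big(\hat K\cos^2(\tfrac{\pi}{T+1})-\bar K\big)}$, and $q(x)\ge 0$ forces $x\le x_-$ or $x\ge x_+$ --- which are exactly \eqref{eq:g24} and \eqref{eq:g23}.

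The only genuinely delicate point is the reduction in the first paragraph: one must check that the diagonal symmetrization is legitimate (it is, because $\rho(A_T)>0$ by Perron--Frobenius for the irreducible nonnegative matrix $A_T$, $\hat K>0$, and $\beta>0$), that the rank-one bump is indeed positive semidefinite (it is, since $r\ge 0$), and --- crucially --- that $\rho(A_T)$ is the \emph{largest} eigenvalue of $B$ rather than merely some eigenvalue, which is precisely where Lemma~\ref{lem:g2} together with $\rho(A_T)>0$ is used so that the chain $\rho(A_T)=\lambda_{\max}(\widehat B)\ge\lambda_{\max}(\widehat B_0)$ runs in the correct direction. Everything after that is an elementary one-variable quadratic argument.
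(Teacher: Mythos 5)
Your proof is correct, and it follows the same skeleton as the paper's: decompose $T(\rho(A_T))$ into its tridiagonal Toeplitz part plus the rank-one corner term $r\,e_Te_T^{\top}$, deduce the scalar inequality $\rho(A_T)\ge -\bar K\beta+2\sqrt{\hat K\beta\rho(A_T)}\cos\bigl(\tfrac{\pi}{T+1}\bigr)$, and finish with the quadratic formula in $x=\sqrt{\rho(A_T)}$, using the hypothesis $\hat K\cos^2\bigl(\tfrac{\pi}{T+1}\bigr)-\bar K>0$ only to guarantee real roots. The one place where you genuinely diverge is the justification of the comparison step: the paper shifts by $(\bar K\beta+\epsilon)I$ to make both matrices entrywise nonnegative, applies an entrywise Perron--Frobenius-type monotonicity result for spectral radii, and reads off the spectrum of the (nonsymmetric) tridiagonal Toeplitz block from the Toeplitz literature; you instead symmetrize by the diagonal similarity $D$ (legitimate since $\rho(A_T)\beta\hat K>0$ for $\beta>0$, with the $\beta=0$ case handled separately), note that the corner term is a PSD rank-one bump, and invoke Weyl monotonicity of the top eigenvalue together with the classical eigenvalue formula for symmetric tridiagonal Toeplitz matrices. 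Your route is arguably cleaner (no $\epsilon$-shift, only standard symmetric eigenvalue facts), while the paper's nonnegative-matrix argument stays closer to the Perron--Frobenius machinery it reuses elsewhere; both hinge in the same way on Lemma~\ref{lem:g2} (plus $\rho(A_T)>0$ and realness of the spectrum) to identify $\rho(A_T)$ with the top eigenvalue of $T(\rho(A_T))$ so the inequality runs in the right direction.
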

\begin{proof}
    Let $(e_i)_i$ be the canonical basis of $\mathbb R^{T-1}$. For any $\varepsilon>0$, we have the componentwise inequality 
\[
0\leq T(\rho(A_T))+(\bar K\beta +\varepsilon)I - re_{T-1}e_{T-1}^{\top} \leq T(\rho(A_T))+(\bar K\beta +\varepsilon)I
\]
and thus, by \cite[Theorem 2.2]{eriksson2023perron}, we obtain that
\[
\rho\left(  T(\rho(A_T))+(\bar K\beta +\varepsilon)I - re_{T-1}e_{T-1}^{\top}  \right) \leq \rho \left( T(\rho(A_T))+(\bar K\beta +\varepsilon)I  \right) .
\]
By Lemma \ref{lem:g2}, we obtain that
\[
\rho \left( T(\rho(A_T))+(\bar K\beta +\varepsilon)I  \right) = \rho(A_T) + \bar K \beta + \varepsilon
\]
and \cite[Theorem 2.4]{bottcher2005spectral} implies
\begin{align*}
    \rho\left(  T(\rho(A_T))+(\bar K\beta +\varepsilon)I - re_{T-1}e_{T-1}^{\top}  \right) = \varepsilon +2 \sqrt{\hat K \beta \rho(A_T)}\cos\left(\frac{\pi}{T+1}\right).
\end{align*}
It then follows that we have
\[
-\bar K \beta +2 \sqrt{\hat K \beta \rho(A_T)}\cos\left(\frac{\pi}{T+1}\right) \leq \rho(A_T).
\]
Treating $\sqrt{\rho(A_T)}$ as a variable, using the quadratic formula, we obtain either of the following bounds for all sufficiently large $T$:
\[
\sqrt{\hat K \beta}\cos\left(\frac{\pi}{T+1}\right)+\sqrt{ \left(\hat K \cos^2\left(\frac{\pi}{T+1}\right) - \bar K \right)\beta }  \leq \sqrt{\rho(A_T)}
\]
or
\[
\sqrt{\rho(A_T)} \leq \sqrt{\hat K \beta}\cos\left(\frac{\pi}{T+1}\right)-\sqrt{ \left(\hat K \cos^2\left(\frac{\pi}{T+1}\right) - \bar K \right)\beta },
\]
since we have that $\sqrt{\hat K \beta}-\sqrt{ \left(\hat K  - \bar K \right)\beta }>0$.
\end{proof}

Our aim now will be to prove that, for any sufficiently large $T$, \eqref{eq:g24} does not hold for $\rho(A_T)$. This will be done while finding an asymptotically sharp upper bound for \eqref{eq:g23}, which will allow us to obtain Theorem \ref{thrm:main}.

To obtain a sharp upper bound for $\rho(A_T)$, first, we would like to characterize the eigenvectors of the matrix $T(\rho(A_T))$, which is inspired by \cite{yueh2005eigenvalues}. In \cite{yueh2005eigenvalues}, a symbolic calculus over rings is used to analyze the eigenvalues of specific rank-two perturbations of tridiagonal matrices. In our case, our tridiagonal matrices also depend on one of their eigenvalues, so instead, we will assume a slightly different representation for the eigenvectors of $T(\rho(A_T))$ that will lead to a different analysis and will be more useful in our setting.
\begin{lemma}\label{lem:222}
    Suppose $\beta <1$. Any eigenvalue $\lambda$ of $T(\rho(A_T))$ can be written as
    \[
        \lambda = -\bar K \beta + \sqrt{\rho(A_T)\beta \hat K}\left(z + z^{-1}\right),
    \]
    where $z$ is a (potentially complex) root of the polynomial
    \[
    p(z) := z^{2T} - \frac{ r }{\sqrt{ \rho(A_T) \beta \hat K}} z^{2T-1} + \frac{ r }{\sqrt{ \rho(A_T) \beta \hat K}} z -1.
    \]
    Furthermore, if we write $\rho(A_T) = -\bar K\beta + \sqrt{\rho(A_T)\beta\hat K}\left(z + z^{-1}\right)$,
    then right Perron eigenvectors of $A_T$ are of the form $v=(v_i)_{i=1}^T$ (up to a constant multiplicity in $\mathbb C$), where
    \[
    v_j = \left( \sqrt\frac{\hat K}{\rho(A_T)\beta}\right)^j\left( z^j - z^{-j}\right).
    \]
\end{lemma}
\begin{proof}
    See Appendix \ref{sect:lem:222}.
\end{proof}

The degree {of the polynomial} $p$ we defined in Lemma \ref{lem:222} depends on $T.$ In what follows, when we refer to the index $T$, we will also refer to the degree of $p$ although we do not explicitly have $T$ in the notation. In the next lemma, we will provide a sufficient condition that holds under \eqref{eq:g23} so that the roots of the polynomial $p$ all lie on the unit circle $\{z \in \mathbb C :|z|=1\}$, which will be crucial for establishing a tight upper bound for \eqref{eq:g23}.
\begin{lemma}\label{lem:h24}
    If $\frac{r}{\sqrt{ \rho(A_T)\beta \hat K}} \leq 1,$ then the roots of the polynomial $p$ defined in Lemma \ref{lem:222} are on the complex unit circle $\{z \in \mathbb C: |z|=1\}$.
\end{lemma}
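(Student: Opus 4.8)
The plan is to work directly from the equation $p(z)=0$ rather than invoke the general theory of self-inversive polynomials (although $p$ is self-inversive, since one checks $z^{2T+2}p(1/z)=-p(z)$, so Cohn-type criteria would also apply). Abbreviate $a:=\frac{r}{\sqrt{\rho(A_T)\beta\hat K}}$; since $r$, $\beta$, $\rho(A_T)$, $\hat K$ are all strictly positive, $a$ is a well-defined strictly positive real number, and by hypothesis $a\le 1$. I would first dispose of the endpoint $a=1$ by hand: there $p(z)=z^{2T+1}(z-1)+(z-1)=(z-1)(z^{2T+1}+1)$, whose zeros are $z=1$ together with the $(2T+1)$-st roots of $-1$, all lying on $\{|z|=1\}$.

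For $a\in(0,1)$, the key step is a reformulation: rearranging $p(z)=0$ gives $z^{2T+1}(z-a)=1-az$. Since $p(a)=a^{2T+2}-a^{2T+2}+a^2-1=a^2-1\neq 0$, no root equals $a$, so we may divide and obtain
\[
z^{2T+1}=\frac{1-az}{\,z-a\,},\qquad\text{hence}\qquad |z|^{2T+1}=\left|\frac{1-az}{z-a}\right|
\]
for every root $z$. The second ingredient is the elementary identity, valid for real $a$ and arbitrary $z\in\mathbb C$,
\[
|1-az|^2-|z-a|^2=(1-a^2)\,(1-|z|^2),
\]
obtained by expanding both sides and using $\bar a=a$; this is just the fact that $w\mapsto\frac{w-a}{1-aw}$ is an automorphism of the unit disc. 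Because $0<a<1$, the factor $1-a^2$ is positive, so $|1-az|-|z-a|$ has the same sign as $1-|z|^2$.

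Putting these together finishes the argument: if $|z|>1$ then $\left|\frac{1-az}{z-a}\right|<1$ whereas $|z|^{2T+1}>1$, a contradiction; and if $|z|<1$ then $\left|\frac{1-az}{z-a}\right|>1$ whereas $|z|^{2T+1}<1$, again a contradiction. Therefore $|z|=1$ for every root of $p$, as claimed.

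I do not anticipate a genuine obstacle; the proof is short once one spots the Blaschke-factor reformulation. The only points that need a word of care are (i) checking that $z=a$ is never a root, so that dividing by $z-a$ is legitimate, and (ii) separating the endpoint $a=1$, where $z=a=1$ lies on the unit circle and the disc automorphism degenerates, which is why that case is handled by the explicit factorization instead.
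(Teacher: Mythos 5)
Your proof is correct, and it takes a more elementary route than the paper's. The paper works with the same Blaschke-type ratio, writing every root as a solution of $h(z)=\frac{z^{2T+1}(z-j)}{1-jz}=1$ with $j=\frac{r}{\sqrt{\rho(A_T)\beta\hat K}}<1$, and then rules out roots in the open disc by the maximum modulus principle and roots outside the closed disc by a minimum modulus argument applied to $h(1/z)$ on punctured regions $D_{\delta,\epsilon}$; you instead rearrange $p(z)=0$ into $z^{2T+1}=\frac{1-az}{z-a}$ (after the necessary check $p(a)=a^2-1\neq 0$, which legitimizes the division) and compare moduli via the elementary identity $|1-az|^2-|z-a|^2=(1-a^2)(1-|z|^2)$, which handles $|z|<1$ and $|z|>1$ symmetrically in one stroke and dispenses with complex-analytic machinery altogether. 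Both treatments isolate the endpoint $a=1$ by explicit factorization; your factorization $(z-1)(z^{2T+1}+1)$ is the correct one (the paper's displayed $(z^{2T+1}-1)(z-1)$ is a harmless sign slip, since either way the roots lie on the unit circle). What your approach buys is brevity and self-containedness; what the paper's buys is essentially nothing extra here, since the conformal-map property it invokes is exactly the inequality your algebraic identity encodes.
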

\begin{proof}
    See Appendix \ref{sect:lem:h24}.
\end{proof}

To provide a sharp upper bound for $(\rho(A_T))_T$, in our next result, we show that for any sufficiently large $T$ we must indeed have 
\[
\frac{r}{\sqrt{\rho(A_T)\beta \hat K}} \leq 1.
\]
Then, using Lemma \ref{lem:222} and Lemma \ref{lem:h24}, we provide a sharp upper bound for $\rho(A_T)$ and show that \eqref{eq:g23} is a lower bound for $\rho(A_T)$ for sufficiently large $T$. This will amount to a large fraction of the proof of Theorem \ref{thrm:main}. The main caveat of the proof will be that we have to argue by contradiction to exhaust all the possibilities of the asymptotes of $T \mapsto \rho(A_T)$, which results in tight asymptotes for $\rho(A_T)$ only for sufficiently large $T$. The proof only provides us the existence of such a threshold for $T$.

 \begin{theorem}\label{thrm:a1}
Suppose that Assumption \ref{assump:1} holds. Further suppose $\beta <1$. For any sufficiently large $T$ we must have:
\begin{equation}\label{eq:c}
    \sqrt{ \hat K \beta} \cos \left( \frac{\pi}{T+1} \right) + \sqrt{ \left( \hat K\cos^2\left( \frac{\pi}{T+1} \right)-\bar K\right) \beta} \leq \sqrt{\rho(A_T)} \leq \sqrt{ \hat K \beta} + \sqrt{ \left( \hat K-\bar K\right) \beta}.
\end{equation}
In particular, for all sufficiently large $T$, we must have that $r \leq \sqrt{ \rho(A_T)\hat K \beta}.$
\end{theorem}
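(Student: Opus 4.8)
The plan is to deduce the lower bound in \eqref{eq:c} by ruling out the alternative \eqref{eq:g24} for all large $T$, and then to read off the upper bound from Lemma~\ref{lem:h24}. Abbreviate $L_A:=\sqrt{\hat K\beta}+\sqrt{(\hat K-\bar K)\beta}$ and $L_B:=\sqrt{\hat K\beta}-\sqrt{(\hat K-\bar K)\beta}$, the two roots of $x^2-2\sqrt{\hat K\beta}\,x+\bar K\beta=0$; since $\hat K=\bar K+\tfrac{K_1}{\rho}\bar L>\bar K>0$ we have $0<L_B<L_A$. I will use throughout that $\rho(A_T)$ is nondecreasing in $T$ (shown above) and strictly positive (it dominates the diagonal entry $\tfrac{\bar LK_1}{\rho}\beta$ of the nonnegative matrix $A_T$), together with the elementary facts $r-\bar K\beta=\tfrac{\beta K_1L_1}{\rho}>0$ and $\beta\hat K\ge r$, both immediate from $\bar L\ge L_1$.

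First I would reduce to showing that \eqref{eq:g24} cannot hold for all large $T$. For every $T$ with $\hat K\cos^2(\tfrac{\pi}{T+1})>\bar K$ (hence all large $T$, as $\hat K>\bar K$), Proposition~\ref{prop:g1} gives exactly one of \eqref{eq:g23}, \eqref{eq:g24}, with thresholds $L_{A,T}\uparrow L_A$ and $L_{B,T}\downarrow L_B$ as $T\to\infty$. If \eqref{eq:g24} held for infinitely many $T$, then along that subsequence $\sqrt{\rho(A_T)}\le L_{B,T}$, so by monotonicity $\rho(A_T)\le L_B^2$ for every $T$; but then \eqref{eq:g23}, which would force $\sqrt{\rho(A_T)}\ge L_{A,T}\to L_A>L_B$, fails for all large $T$, so \eqref{eq:g24} must hold for all large $T$.

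Now I would contradict this. Under the assumption $\rho(A_T)\le L_B^2$, and since $\sqrt{\hat K(\hat K-\bar K)}>\hat K-\bar K$ one computes $L_B\sqrt{\beta\hat K}=\beta\bigl(\hat K-\sqrt{\hat K(\hat K-\bar K)}\bigr)<\beta\bar K<r$, so $j(T):=\tfrac{r}{\sqrt{\rho(A_T)\beta\hat K}}>1$ stays bounded away from $1$. Hence the hypothesis of Lemma~\ref{lem:h24} fails; in fact the polynomial $p=p_T$ of Lemma~\ref{lem:222} has a real root in $(1,j(T))$, since $p'(1)=2\bigl(1-T(j(T)-1)\bigr)<0$ for large $T$ while $p(1)=0$ and $p(j(T))=j(T)^2-1>0$. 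Call it $z_T$. As $z_T\ne\pm1$ it is a root of the palindromic factor $q_T$, so by Lemma~\ref{lem:222} the vector $u_k=\bigl(\hat K/(\rho(A_T)\beta)\bigr)^{k/2}(z_T^k-z_T^{-k})$ is a strictly positive eigenvector of $T(\rho(A_T))$ with eigenvalue $-\bar K\beta+\sqrt{\rho(A_T)\beta\hat K}(z_T+z_T^{-1})$; applying Perron--Frobenius to the nonnegative irreducible shift $T(\rho(A_T))+(\bar K\beta+\epsilon)I$ (cf.\ the proof of Lemma~\ref{lem:g2}) forces this eigenvalue to be the Perron one, i.e. $\rho(A_T)=-\bar K\beta+\sqrt{\rho(A_T)\beta\hat K}\bigl(z_T+z_T^{-1}\bigr)$. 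Since $\rho(A_T)\uparrow L\in(0,L_B^2]$ and $j(T)\to j_\infty:=r/\sqrt{L\beta\hat K}>1$, while $z_T^{2T+1}=\tfrac{j(T)z_T-1}{j(T)-z_T}$ with $z_T\in(1,j(T))$ forces $z_T\to j_\infty$ (otherwise the left side blows up against a bounded right side), passing to the limit and using $\sqrt{L\beta\hat K}\,j_\infty=r$ gives $L=-\bar K\beta+r+\tfrac{L\beta\hat K}{r}$, i.e. $L\bigl(1-\tfrac{\beta\hat K}{r}\bigr)=r-\bar K\beta>0$; with $L>0$ this forces $\beta\hat K<r$, contradicting $\beta\hat K\ge r$. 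Therefore \eqref{eq:g23} — the lower bound in \eqref{eq:c} — holds for all large $T$.

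Finally, the lower bound gives $\sqrt{\rho(A_T)}\ge L_{A,T}\to L_A>\sqrt{\hat K\beta}\ge r/\sqrt{\hat K\beta}$, so $r<\sqrt{\rho(A_T)\hat K\beta}$ for all large $T$, which is the ``in particular'' assertion. Then $j(T)<1$, so by Lemma~\ref{lem:h24} and the discussion following it every eigenvalue of $T(\rho(A_T))$ equals $-\bar K\beta+2\sqrt{\rho(A_T)\beta\hat K}\cos\theta$ for some real $\theta$; taking the Perron eigenvalue $\rho(A_T)$ yields $\rho(A_T)\le-\bar K\beta+2\sqrt{\rho(A_T)\beta\hat K}$, which rearranged as a quadratic in $\sqrt{\rho(A_T)}$ is exactly $\sqrt{\rho(A_T)}\le L_A$. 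I expect the main obstacle to be the middle step: producing the real root $z_T>1$ (which needs $j(T)$ bounded away from $1$, itself forced by the a priori bound $\rho(A_T)\le L_B^2$ that \eqref{eq:g24} supplies), certifying via Perron--Frobenius that it is this root — and not some unit-circle root — that carries $\rho(A_T)$, and pinning down the limit $z_T\to j_\infty$ precisely, since only that exact value makes the resulting fixed-point identity close up to yield $\beta\hat K<r$.
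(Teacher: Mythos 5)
Your overall route is the paper's: reduce via Proposition~\ref{prop:g1} to excluding \eqref{eq:g24} for all large $T$, analyze the real roots of the polynomial $p$ from Lemma~\ref{lem:222}, and obtain the upper bound in \eqref{eq:c} together with $r\le\sqrt{\rho(A_T)\hat K\beta}$ from Lemma~\ref{lem:h24}, exactly as in the text. Several of your refinements are sound and arguably cleaner than the paper's wording: using monotonicity of $\rho(A_T)$ to upgrade ``\eqref{eq:g24} infinitely often'' to ``$\rho(A_T)\le L_B^2$ for all $T$ and \eqref{eq:g24} for all large $T$'', hence $j(T)=r/\sqrt{\rho(A_T)\beta\hat K}$ bounded away from $1$; and the Perron--Frobenius certification, via the explicit positive eigenvector $u_k\propto z_T^k-z_T^{-k}$ with $z_T>1$, that the real root of $p$ carries the Perron eigenvalue, i.e.\ $\rho(A_T)=-\bar K\beta+\sqrt{\rho(A_T)\beta\hat K}\,(z_T+z_T^{-1})$ (the paper asserts this correspondence more tersely). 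Your closing contradiction ($\beta\hat K<r$ against $\beta\hat K\ge r$) is a legitimate limit version of the paper's $\epsilon$-quantified inequality.

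The genuine gap is exactly where you flagged it: the claim $z_T\to j_\infty$. You only locate $z_T$ in $(1,j(T))$ (via $p(1)=0$, $p'(1)<0$, $p(j(T))>0$), and the justification ``otherwise the left side blows up against a bounded right side'' fails if $z_T\to 1$ along a subsequence: there the root equation $z_T^{2T+1}=\frac{j(T)z_T-1}{j(T)-z_T}$ only forces $z_T^{2T+1}\to 1$, nothing blows up, and passing to the limit in the fixed-point identity gives $L=-\bar K\beta+2\sqrt{L\beta\hat K}$, i.e.\ $\sqrt L=L_B$, which is perfectly consistent with your standing bound $L\le L_B^2$ --- no contradiction is reached. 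So you must rule out roots of $p$ near $1$. This is true but requires an argument: for $0<s\le (j(T)-1)/2$ one has $(1+s)^{2T+1}\ge 1+(2T+1)s$ while $\frac{j(T)(1+s)-1}{j(T)-1-s}\le 1+\frac{2(j(T)+1)}{j(T)-1}s$, and since $j(T)-1\ge\delta>0$ and $j(T)$ is bounded, for all large $T$ the first quantity strictly dominates the second; hence $p<0$ on $(1,\,1+(j(T)-1)/2]$, every root of $p$ in $(1,j(T))$ satisfies $z_T\ge 1+(j(T)-1)/2$, and only then does your blow-up argument force $z_T\to j_\infty$. The paper sidesteps this issue by locating the relevant root directly in $(j-\epsilon,j)$, checking $p(j-\epsilon)<0$ and $p(j)>0$. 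With this repair (or by adopting the paper's root location), your argument goes through.
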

\begin{proof}
    Recall that $z + z^{-1}$ is real if and only if $z \in \{ z \in \mathbb C : |z|=1\}$ or $z \in \mathbb R_{\not =0}.$ 
    Since the root of the polynomial $p$ that corresponds to $\rho(A_T)$ must be a real number in this case, we will study the behavior of the polynomial $p$ on the real line. If we can eliminate the possibility that $|z| \not = 1$, the result follows. We note that, if $z$ is a root of $p$, $z^{-1}$ will also be a root of $p$, for our purposes it will be sufficient to investigate the real roots of $p$ that are greater than $1$. In particular, we want to understand where the largest real root $z$ of $p$ may lie on. 
    
    For the sake of contradiction, suppose that \eqref{eq:g24} can hold for arbitrarily large $T>0$. If for some $T>0$ we have $\frac{r}{\sqrt{\rho(A_T)\hat K \beta}}\leq 1$, then {$r=\hat K\beta$ implies that for any sufficiently large $T$} we have $\sqrt{\rho(A_T)} \geq \sqrt{\hat K \beta}$, which is a contradiction to \eqref{eq:g24}. So we must have $\frac{r}{\sqrt{\rho(A_T)\hat K \beta}}>1$ for large $T$ values that satisfy \eqref{eq:g24}. {Since $(\rho(A_T))_T$ is monotone, $\lim_{T \to \infty}\rho(A_T)>0$ exists.} Note that if $\lim_{T \to \infty} \frac{ r }{\sqrt{\rho(A_T)\hat K \beta}} =1$, we must have $\lim_{T \to \infty} \sqrt{\rho(A_T)} \ge \sqrt{\hat K\beta}$, which is a contradiction to \eqref{eq:g24} for all large $T$. Thus, we must have $\frac{r}{\sqrt{\rho(A_T)\hat K \beta}}>1+\delta$ for some $\delta >0$ for all sufficiently large $T$ due to the monotonicity of $\rho(A_T)$. 
    
    In this case, note that for all sufficiently large $T$, we have that $p\left(\frac{r}{\sqrt{\rho(A_T)\hat K \beta}}\right)>0.$ Furthermore, for all sufficiently small $\delta >\varepsilon >0,$
    \[
    p\left(\frac{r}{\sqrt{\rho(A_T)\hat K \beta}}-\varepsilon \right) = \left(\frac{r}{\sqrt{\rho(A_T)\hat K \beta}}-\varepsilon \right)^{2T+1}(-\varepsilon) + \left(\frac{r}{\sqrt{\rho(A_T)\hat K \beta}}\right)^2-\varepsilon\frac{r}{ \sqrt{\rho(A_T)\hat K \beta}}-1<0
    \]
    for sufficiently large $T$ as we must have $\left(\frac{r}{\sqrt{\rho(A_T)\hat K \beta}}-\varepsilon \right)>1+\tilde \varepsilon$ in this case for some $\tilde \varepsilon$.

    For all $k \geq \frac{r}{\sqrt{\rho(A_T)\hat K \beta}}$, 
    \[
    p'(k)= k^{2T}\left( 2Tk - \frac{r}{\sqrt{\rho(A_T)\hat K \beta}}(2T-1) \right)+ \frac{r}{\sqrt{\rho(A_T)\hat K \beta}}>0,
    \]
    and thus it follows that 
    $p$ does not change sign over the real line after $\frac{r}{\sqrt{\rho(A_T)\hat K \beta}}>1.$ Since $z + z^{-1}$ is a real number if and only if $|z|=1$ or $z \in \mathbb R_{\not = 0}$, it follows that for all sufficiently large $T$, there is a root $z'$ of $p$ between $\left(\frac{r}{\sqrt{\rho(A_T)\hat K \beta}}-\varepsilon \right)$ and $\frac{r}{\sqrt{\rho(A_T)\hat K \beta}}$ that corresponds to $\rho(A_T).$ For all sufficiently small $\varepsilon$, for all sufficiently large $T$, as the function $x \mapsto x+\frac 1x$ is increasing when $x>1$, it must then hold that
    \begin{align}
    &- \bar K \beta + \sqrt{\rho(A_T)\hat K \beta }\left( \frac{r}{\sqrt{\rho(A_T)\hat K \beta}} + \frac{\sqrt{\rho(A_T)\hat K \beta}}{r} -\varepsilon \right) 
    \\&= \frac{K_1\bar L\beta}{\rho} + \rho(A_T) - \varepsilon\sqrt{\rho(A_T)\hat K \beta } \leq \rho(A_T).
    \end{align}
    {Therefore, for all sufficiently large $T$ we must have
    \[
    \frac{K_1\bar L}{\rho}\beta \leq \varepsilon \sqrt{\rho(A_T)\hat K \beta}.
    \]
    This leads to a contradiction to $\sqrt{\rho(A_T)}$ being strictly positive and bounded above (see \eqref{eq:ref2})} since we can take $\varepsilon \to 0$ by choosing larger and larger values of $T$. It then follows that \eqref{eq:g24} cannot hold for all sufficiently large $T$.

    Since \eqref{eq:g23} must hold for all sufficiently large $T$, it follows directly that for any sufficiently large $T$ we must have $\frac{r}{\sqrt{\beta \hat K}} \leq \sqrt{\rho(A_T)}$. Then, by Lemma \ref{lem:h24}, eigenvalues of $T(\rho(A_T))$ are  of the form 
    $\lambda =- \bar K \beta + \sqrt{ \rho(A_T) \beta \hat K} ( z + \bar z)$ for $|z|=1$. Thus, $z=e^{i \theta}$ for some $\theta \in \mathbb R$, and hence $z+ \bar z \in \mathbb R$. Furthermore, $|z+\bar z| \leq 2$. Hence, 
    \[
    \rho(A_T) \leq - \bar K \beta + 2\sqrt{\rho(A_T)\beta \hat K}.
    \]
    We can obtain the upper bound from the inequality above by applying the quadratic formula to the variable $x = \sqrt{\rho(A_T)}$.
\end{proof}

We want to highlight that unlike the bound \eqref{eq:ref2}, the bound in \eqref{eq:c} only considers the discounted cost in the quotient in the expression of $\bar L$. Moreover, the condition $\rho(A_T)<1$ already forces $K_1 \leq \frac 23$, and the contraction rate we have for MFGs with a finite-horizon does not blow up as $\beta \rightarrow 1,$ in contrast to the infinite-horizon scenario as we will show in the next subsection. 

Using Theorem \ref{thrm:a1}, we now proceed to complete the proof of Theorem \ref{thrm:main}. This will amount to a further perturbation of the matrix $T(\rho(A_T))$ for all sufficiently large $T$.

\begin{proof}[Proof of Theorem \ref{thrm:main}]
    We observe that, as $r \leq \sqrt{ \rho(A_T) \hat K \beta}$ must hold for sufficiently large $T$ by Theorem \ref{thrm:a1}, the following majorization for $T(\rho(A_T))$ must hold for all such sufficiently large $T$:
\[
T(\rho(A_T)) \leq 
\begin{bmatrix}
    -\bar K \beta & \rho(A_T)\beta & 0 & \cdots & 0 & 0\\
    \hat K  & - \bar K \beta & \rho(A_T)\beta & \cdots & 0 & 0\\
    0& \hat K & -\bar K \beta & \cdots &0 & 0\\
    \vdots & \vdots & \vdots & \ddots & \vdots & \vdots \\
    0 & 0 & 0 & \cdots &-\bar K \beta & \rho(A_T)\beta\\
    0 & 0 & 0 & \cdots & \hat K  & - \bar K \beta + \sqrt{\rho(A_T)\beta \hat K}
\end{bmatrix} =: \hat T(\rho(A_T)).
\]
Repeating Lemma \ref{lem:222} for $\hat T(\rho(A_T))$ (simply by plugging $\sqrt{\rho(A_T)\beta \hat K}$ in the place of $r$), we see that eigenvalues of $\hat T (\rho(A_T))$ must come from the polynomial 
    \[
    \tilde     p(z) := z^{2T} -  z^{2T-1} +  z -1 = (z^{2T-1}+1)(z-1).
    \]
The roots of $\tilde p$ that contribute to the eigenvalues of $\hat T(\rho(A_T))$ are merely the $(2T-1)^{\mathrm{th}}$ roots of unity. In particular, since the entries of $\hat T(\rho(A_T))$ are greater than those of $T(\rho(A_T))$, we must have $\rho(\hat T(\rho(A_T))) = -\bar K \beta + 2 \sqrt{\rho(A_T) \beta \hat K}\cos( \frac{\pi}{2T-1}) \geq \rho(A_T)$ as a consequence of the spectral radius monotonicity for nonnegative matrices. 

The structure of Perron eigenvectors of $A_T$ follows from Lemma \ref{lem:222}, Lemma \ref{lem:h24}, and Theorem \ref{thrm:a1} directly, as we must have that $|z|=1$ for any sufficiently large $T$. 
\end{proof}

For the purpose of our finite-error bounds, we will also need the left Perron eigenvectors of the matrix $A_T$.
\begin{proposition}\label{prop:a}
    Suppose that Assumption \ref{assump:1} holds. For any sufficiently large $T$, left Perron eigenvectors of $A_T$ are of the form $v=(v_j)_j$, where
    \[
    v_j = \left( \sqrt\frac{\rho(A_T)\beta}{\hat K}\right)^j2i\sin((T-j)\theta_T),
    \]
    where $\theta_T$ satisfies the relation
    \[
    \rho(A_T) = -\bar K \beta + \sqrt{\rho(A_T)\beta \hat K}\cos(\theta_T).
    \]
\end{proposition}
\begin{proof}
    Follows after repeating the proof of Theorem \ref{thrm:main} for the transpose of $A_T$, $A^*_T$.
\end{proof}

\begin{figure}[h]
  \centering
  \begin{minipage}{0.5\textwidth}
    \centering
    \includegraphics[width=\linewidth]{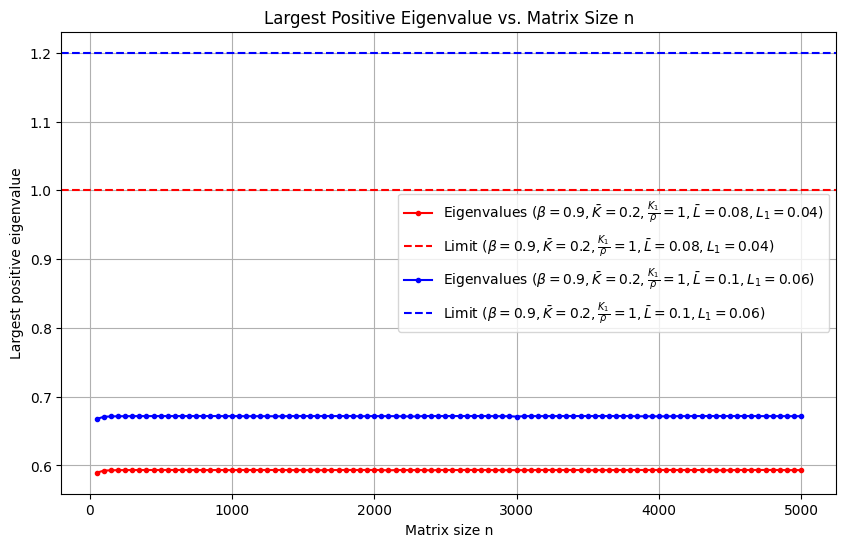}
    \\[2mm]
    \label{fig:hehe1}
  \end{minipage}
  \hfill
  \begin{minipage}{0.5\textwidth}
    \centering
    \includegraphics[width=\linewidth]{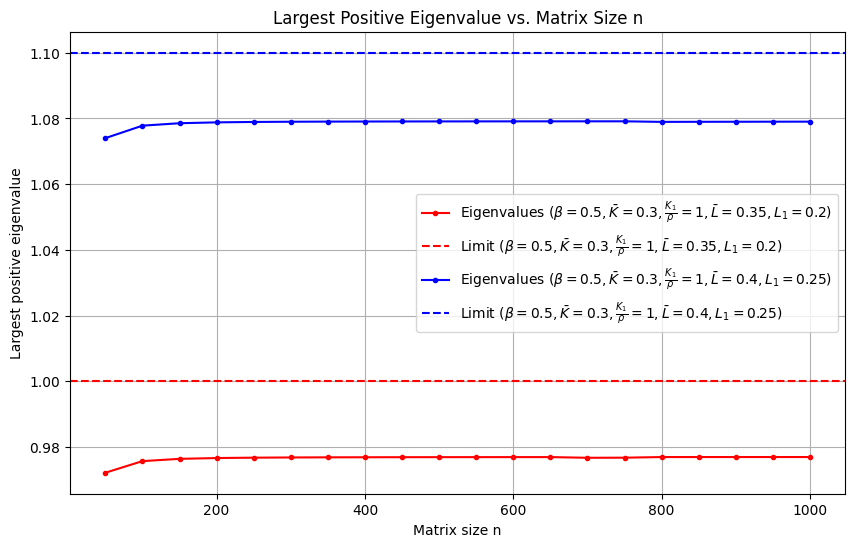}
    \\[2mm]
    \label{fig:hehe3}
  \end{minipage}
  \caption{The limit values represent the upper bound predicted by Gershgorin's Circle Theorem for the matrix $A_T$, while the eigenvalues are estimates of $\rho(A_T)$ obtained by searching $k$ that satisfies the quantity $\mathrm{det}(T(\rho(A_T))-kI)=0$, where $I$ is an identity matrix of appropriate dimension.}
  \label{fig:1}
\end{figure}

\subsection{Fixed-Point Iteration for Infinite-Horizon MFGs}\label{sect:3.2}

Let MFG\(_{\text{ns}}=(X,\mathcal P(A),C+\Omega,P,\mu_0)\) and MFG\(_{\text s} =(X,\mathcal P(A),C+\Omega,P)\). In this subsection, we will study the contraction rates for fixed-point iterations for $\mathrm{MFG}_{\mathrm{ns}}$ and \(\mathrm{MFG}_{\mathrm{s}}\). {In the case of \(\mathrm{MFG}_{\mathrm{s}}\), the contraction condition proved in \cite{anahtarci2023q} reads 
\[
\bar K+\frac{\bar L K_1}{\rho(1-\beta)}<1.
\]
As discussed in Remark \ref{rem:1}, MFG$_{\mathrm T}$ can enjoy a strictly more relaxed contraction condition for all $T<\infty$ than $\bar K+\frac{\bar L K_1}{\rho(1-\beta)}<1$. This raises the question whether using spectral analysis tools can provide us more relaxed contraction conditions for MFG$_{\mathrm{ns}}$ than the contraction condition of MFG$_{\mathrm{s}}$. For this purpose, we will extend the analysis done in the previous subsection to the case $T=\infty$.} Our aim will be to establish a similar {contraction} result for $\mathrm{MFG}_{\mathrm{ns}}$. {In particular,} we will show that the contraction rate one obtains for the fixed-point iteration for $\mathrm{MFG}_{\mathrm{ns}}$ in the space of bounded sequences, \((\ell_{\infty},\|\cdot\|_{\infty}),\) is the same as the one found for $\mathrm{MFG}_{\mathrm s}$ in \cite{anahtarci2023q}.

The main result of this section is the following:
\begin{theorem}\label{prop:2}
    Suppose that Assumption \ref{assump:1} holds. If $\bar K + \frac{\bar LK_1}{\rho(1-\beta)} <1$, then there exist unique MFEs of regularized $\mathrm {MFG}_{\mathrm{ns}}$ and regularized $\mathrm {MFG}_{\mathrm{s}}$ under the regularization parameter $\rho$.
\end{theorem}

The proof of Theorem \ref{prop:2} relies on constructing a contractive mapping over the space of bounded sequences, $(\ell_{\infty},\|\cdot\|_{\infty})$ in the $\mathrm{MFG}_{\mathrm{ns}}$ case. {As done in Lemma \ref{lem:f4}, we will argue by majorization. The dynamics of the majorization in the infinite-horizon non-stationary case will be given by an infinite-dimensional one-sided Toeplitz matrix, $\mathcal A$, over $\ell_{\infty}$.} It will be shown that $\bar K + \frac{\bar LK_1}{\rho(1-\beta)}$ is the {spectral radius of $\mathcal A$ over $\ell_{\infty}$. The remaining $\mathrm{MFG}_{\mathrm s}$ case is covered in \cite{anahtarci2023q} as discussed in Subsection \ref{sect:b}, and hence we will not include the details for the stationary case. 

Before proceeding with the proof of Theorem \ref{prop:2}, a few further remarks are in order concerning the gap between the contraction conditions in the cases \(T<\infty\) and \(T=\infty\). The matrices \(A_T\) used in Subsection \ref{sect:b} may be viewed, for each \(T\in\mathbb N\), as finite-rank operators on \(\ell_{\infty}\). It is well known that the space \((\ell_{\infty},\|\cdot\|_{\infty})\) has the \emph{bounded approximation property}; that is, when \(\ell_{\infty}\) is equipped with the supremum norm, finite-rank operators are dense in the compact operators on \(\ell_{\infty}\) \cite[p.~256]{MR551623}. It is also well known that the only compact Toeplitz operator on \(\ell_{\infty}\) is the zero operator. Therefore, the matrices \((A_T)_T\) cannot approximate its \(T=\infty\) counterpart in the operator norm on \(\ell_{\infty}\). In particular, one should not expect, in general, the spectral radius of \(A_T\) to converge to those of their respective \(T=\infty\) counterparts in $\ell_{\infty}$, which explains the potential gap between the $T<\infty$ and $T=\infty$ cases.

\begin{remark}
    Spectral radius of infinite-dimensional operators are norm and space specific. Thus, the discussion above does not imply that $A_T$ and $B_T$ cannot approximate the spectral radius of their $T=\infty$ counterpart in some other space.
\end{remark}

For the sake of simplicity, we provide the proof of Theorem \ref{prop:2} only for the \(T=\infty\) counterpart of \((A_T)_T\), namely \(\mathcal A\).
To set up the proof of Theorem \ref{prop:2}, we will introduce infinite-horizon non-stationary counterparts of Lemma \ref{lem:f2} and Lemma \ref{lem:f4}, which were fundamental in the finite-horizon case. Recall that Lemma \ref{lem:f4} depends on the Lipschitz properties of the $Q$-functions in the finite-horizon case, i.e., Lemma \ref{lem:f2}. First, we will state an infinite-horizon analogue of Lemma \ref{lem:f2}, where we show that $Q$-functions we obtain in the infinite-horizon case are $\bar L$-Lipschitz over the state space $X$.

\begin{lemma}\label{lem:10}
    Suppose that Assumption \ref{assump:1} holds. Let $\pmb \mu \in \mathcal P(X)^{\infty}$. Then, there exists a unique family of $Q$-functions $(Q_t)_{t \in \mathbb N}$ such that
    \[
    Q_t(x,u) = C(x,u,\mu_t) + \rho\Omega(u)+\beta\sum_{y \in X} \min_{v \in \mathcal P(A)}Q_{t+1}(y,v)P(y|x,u,\mu_t)
    \]
    for all $t \ge0$. In this case, we say that the $Q$-functions $(Q_t)_{t=0-}^{\infty}$ \emph{correspond} to the state-measure flow $\pmb \mu$. Furthermore, it holds that 
    \[
    \sup_{v \in \mathcal P(A)} \sup_{x,x' \in \mathcal P(X)} |Q_t(x,v)-Q_t(\tilde x,v)| \le \bar L
    \]
    and if \(\|C\|_{\infty} + \rho\|\Omega\|_{\infty} \le M\), we have \(\|Q_t\|_{\infty} \le \frac{M}{1-\beta}.\) We will denote $(Q_t)_{t \in \mathbb N}$ as $(Q^{\pmb \mu})_t$ to emphasize the corresponding state-measure flow.
\end{lemma}
\begin{proof}
    See Appendix \ref{sect:lem:f6} for an argument in the nonregularized case. The regularized case follows similarly, and thus we will not include the details.
\end{proof}

Next, we derive an infinite-horizon analogue of Lemma \ref{lem:f4}.

\begin{lemma}\label{lem:11}
    Suppose that $\pmb \mu, \tilde {\pmb \mu} \in \{\mu_0\} \in \mathcal P(X)^{\infty}$. Let $(Q_t)_{t \in \mathbb N}$ and $(\tilde Q_t)_{t \in \mathbb N}$ be the corresponding $Q$-functions. Then, for all $t \ge2$, and $T >t$, we have
    \begin{align}
    \| \mu_t - \tilde \mu_t\|_{\mathrm {TV}} & \leq \left(\bar K +\frac{K_1}{\rho}\bar L\right)\|\mu_{t-1}-\tilde \mu_{t-1}\|_{\mathrm{TV}} + \frac{K_1}{\rho}\bar L\sum_{i=t}^{T-1}\beta^{i-t+1}\|\mu_i -\tilde \mu_i \|_{\mathrm{TV}} 
        \\& \qquad + \frac{K_1}{\rho}\bar L \beta ^{T-t+1} \|\mu_{T} - \tilde \mu_{T}\|_{\mathrm{TV}}+\frac{K_1}{2\rho}\beta^{T+1-t}\|Q_{T+1}-\tilde Q_{T+1}\|_{\infty}\label{eq:yyyy}.
    \end{align}
    In particular, as $T \to \infty$, we have
    \begin{equation}\label{eq:rv100}
    \| \mu_t -\tilde \mu_t\|_{\mathrm{TV}} \le \left(\bar K +\frac{K_1}{\rho}\bar L\right)\|\mu_{t-1} - \tilde \mu_{t-1}\|_{\mathrm{TV}}+ \frac{K_1}{\rho}\bar L\sum_{i=t}^{\infty}\beta^{i-t+1}\|\mu_i -\tilde \mu_i \|_{\mathrm{TV}}.
    \end{equation}
\end{lemma}
\begin{proof}
This result follows from Lemma \ref{lem:f1} by repeatedly applying Lemma \ref{lem:f2} to the \(Q\)-functions of the infinite-horizon MFE. Let \((Q_t)_{t=0}^{\infty}\) and \((\tilde Q_t)_{t=0}^{\infty}\) be the \(Q\)-functions obtained under the state-measure flows \((\mu_t)_{t=0}^{\infty}\) and \((\tilde \mu_t)_{t=0}^{\infty}\), respectively. Lemma \ref{lem:f2} can be adjusted to the infinite-horizon case after applying Lemma \ref{lem:10}, which yields
\begin{equation}\label{eq:rv101}
    \|Q_t-\tilde Q_t\|_{\infty} = \|H_{1,t}(Q_{t+1},\mu_t)-H_{1,t}(\tilde Q_{t+1},\tilde \mu_t)\|_{\infty} \le 2\bar L \|\mu_t-\tilde \mu_t\|_{\mathrm{TV}} + \beta \| Q_{t+1}-\tilde Q_{t+1}\|_{\infty},
\end{equation}
as desired. The bound \eqref{eq:rv100} can be directly obtained from \eqref{eq:rv101} because the $Q$-functions are bounded. In particular,
\(
\beta^T \|Q_T - \tilde Q_T\|_{\infty} = O(\beta^T) \to 0\) as \(T \to \infty,
\)
since the $Q$-functions are bounded above by $\frac{M}{1-\beta}$.
\end{proof}

\begin{proof}[Proof of Theorem \ref{prop:2}]
We will show that the contraction in the $\mathrm{MFG}_{\mathrm{ns}}$ case can be deduced by the spectral properties of a one-sided infinite Toeplitz operator. Generally, the spectrum of finite-dimensional Toeplitz matrices does not describe the spectrum of their infinite-dimensional counterparts \cite[Section 10.3]{bottcher2005spectral}. To analyze what happens in the infinite-horizon case, first observe that $A_T$ is a Toeplitz-like matrix as each of its diagonal strips has the same constant value except for its last column. Thus, taking the time horizon $T \to \infty$ in the inequalities presented in Lemma \ref{lem:f4} by using Lemma \ref{lem:10} and Lemma \ref{lem:11}, we get the following formal expression that bounds the variation of state-measure flows in the infinite-horizon setting:
\begin{equation}\label{eq:f27}
    v_1:=\begin{bmatrix}
         \| H_{2,1}(Q^{\pmb {\mu}}_0,\mu_0) - H_{2,1}(Q^{\pmb {\tilde \mu}}_0,\mu_0) \|_{\mathrm{TV}} \\
        \| H_{2,2}(Q^{\pmb {\mu}}_1,\mu_1) - H_{2,2}(Q^{\pmb {\tilde \mu}}_1,\tilde \mu_1) \|_{\mathrm{TV}} \\
        \vdots 
    \end{bmatrix} \leq \mathcal A \begin{bmatrix}
     \| \mu_1 - \tilde \mu_1 \|_{\mathrm{TV}} \\
     \| \mu_2 - \tilde \mu_2 \|_{\mathrm{TV}} \\
     \vdots \\
    \end{bmatrix}=: \mathcal Av_2,
    \end{equation}
where $\mathcal A$ is the following \emph{bona-fide one-sided infinite Toeplitz matrix} \cite[Eq. 3.15.62]{barry2015operator}
    \[
    \A = \begin{bmatrix}
        \frac{\bar LK_1}{\rho}\beta  & \frac{\bar LK_1}{\rho}\beta^2 & \frac{\bar LK_1}{\rho} \beta^3 & \cdots\\
        \bar K+\frac{\bar LK_1}{\rho} & \frac{\bar LK_1}{\rho}\beta & \frac{\bar LK_1}{\rho} \beta^2 & \cdots \\
        0 & \bar K +\frac{\bar LK_1}{\rho} & \frac{\bar LK_1}{\rho}\beta & \cdots\\
        \vdots & \vdots & \vdots & \ddots
    \end{bmatrix}.
    \]
The expression \eqref{eq:f27} can be made rigorous as follows. Each component of $v_1$ and $v_2$ is bounded by $4,$ since the total variation norm is bounded by $2$, so $v_1,v_2 \in \ell_{\infty}$, where $\ell_{\infty}$ is the space of bounded sequences. Furthermore, each row of $\mathcal A$ is bounded because $\beta<1$; therefore, $\mathcal A$ is indeed an infinite Toeplitz matrix over $\ell_{\infty}$ \cite[Proposition 1.1]{bottcher2005spectral}. Thus, \eqref{eq:f27} can be interpreted as a relation over the vector space $\ell_{\infty}$.

Let $\bar a = (a_1, a_2,\cdots) \in \ell_{\infty}$ be a bounded sequence. Denote by $\| \mathcal A \|_{\ell_{\infty},\textrm{op}}$ the operator norm of $\mathcal A$ obtained under the supremum norm. Then, it is easy to check the following inequality: 
\[
\| \mathcal A \bar a \|_{\ell_{\infty}} \leq \left(\bar K+\frac{\bar LK_1}{\rho(1- \beta)}\right)\| \bar a \|_{\ell_{\infty}}.
\]
Thus, the spectral radius of $\mathcal A$ under the norm $\| \cdot \|_{\ell_{\infty}}$, $\rho(\mathcal A)$, satisfies $\rho(\mathcal A)\leq \| \mathcal A \|_{\ell_{\infty},\textrm{op}} \leq \bar K+\frac{\bar LK_1}{\rho(1- \beta)}$. Furthermore, the \emph{essential spectrum} of $\mathcal A$ \cite[p. 193]{barry2015operator} is the same as the range of the \emph{symbol} of $\mathcal A$ \cite[p. 213]{barry2015operator} over the unit circle in $\mathbb C$ provided that the symbol of $\mathcal A$ is continuous over the unit circle. The symbol that corresponds to the operator $\mathcal A$ is the function $\phi : \mathbb C \to \mathbb C$ defined by the Laurent polynomial
\[
\phi(z) := \frac{\bar LK_1}{\rho}\beta  + \sum_{i=1}^{\infty} \frac{\bar LK_1}{\rho} \beta^{i+1} z^i + \left( \bar K + \frac{\bar LK_1}{\rho} \right)z^{-1}.
\]
Since $\phi$ is continuous on the unit circle over $\mathbb C$, we can use the aforementioned result to calculate the maximum over the essential spectrum of $\mathcal A$, which provides a lower bound for its spectral radius \cite[Theorem 3.15.22]{barry2015operator}. 

A direct inspection shows that the maximum of $\phi$ over the unit circle is attained at $z=1$. Therefore, $\phi(1) = \bar K+\frac{\bar LK_1}{\rho(1- \beta)} \leq \rho(\mathcal A) \leq \bar K+\frac{\bar L K_1}{\rho(1- \beta)}$ since the essential spectrum is contained within the spectrum and the operator norm is an upper bound for the spectral radius. Hence, $\rho(\mathcal A) = \bar K+\frac{\bar L K_1}{\rho(1- \beta)}$, and thus we do not obtain any improvement in the infinite-horizon setting.

   The uniqueness of the MFE of MFG\(_{\mathrm{ns}}\) follows from the discussion above {and the Banach fixed-point theorem}. The stationary case directly follows from \cite{anahtarci2023q}, and thus we will not include the details.
\end{proof}

\begin{remark}
    We note that the operator $\mathcal A$ (or its adjoint) might not have any eigenvalue in $(\ell_{\infty},\|\cdot\|_{\infty})$, and therefore the trick we did in the proof of Theorem \ref{thrm:f2} is not possible to construct a norm that yields convergence of the iterations of \(\alpha \pmb I + (1-\alpha)\mathcal A\) when \(\bar K + \frac{K_1}{\rho(1-\beta)}\bar L=1\) for MFG\(_{\mathrm{ns}}\). We also note that since our proof relies on a majorization argument to preserve the order of the vector inequalities one needs an even stronger property than having an eigenvalue, namely, existence of a positive eigenvector.
\end{remark}

In the next subsection, we show that a unique non-stationary MFG exists even when the condition
\[
\bar K + \frac{\bar L K_1}{\rho(1-\beta)} < 1
\]
fails. Moreover, we show that this unique infinite-horizon non-stationary MFE can be approximated by finite-horizon MFEs.

\subsection{Finite-time Error Bounds Between Finite-Horizon Equilibria and Infinite-Horizon Equilibria}\label{sect:3.5}

In the previous subsection, we showed that the contraction condition for infinite-horizon non-stationary MFGs over $(\ell_{\infty}, \|\cdot\|_{\infty})$ coincides with the contraction condition for stationary MFGs established in \cite{anahtarci2023q}, {which is a more restrictive condition than what we have for finite-horizon MFGs as discussed in Remark \ref{rem:1}.} Furthermore, the corresponding iteration procedure established for infinite-horizon non-stationary MFG is intractable as we have to iterate infinitely many vectors at once. To obtain a tractable iterative scheme for approximating infinite-horizon non-stationary MFE, we show in this subsection that, if the map $\pmb H_T$ is contractive and an additional condition on the Lipschitz coefficients of $(C,P)$ is satisfied, then there exists a unique infinite-horizon non-stationary MFE, which can be approximated by finite-horizon MFEs as $T \to \infty$. We also provide explicit convergence rates. Finally, using the error bounds between the infinite-horizon non-stationary MFE and the finite-horizon MFEs as an intermediate step, we establish finite-time error bounds between the finite-horizon MFEs and the stationary MFE under the conditions of Theorem \ref{prop:2}.

For this purpose, we work within the setting of the previous subsection, i.e., our MFGs are regularized. The following additional assumption on the Lipschitz coefficients of the system components will be necessary for our purposes:
\begin{assumption}\label{assump:4}
There exists $1>\varepsilon>0$ such that we have
\[
\frac{ \sqrt {\hat K} }{ \sqrt{\hat K} + \sqrt{\hat K - \bar K} } > \beta ^{1-\varepsilon}.
\]
\end{assumption}

\begin{remark}\label{rem:ssss}
 It is easy to find examples where Assumption \ref{assump:4} is satisfied but $\bar K + \frac{\bar LK_1}{\rho(1-\beta)} >1,$ or where $\bar K + \frac{\bar LK_1}{\rho(1-\beta)} <1$ but Assumption \ref{assump:4} is not satisfied. This remark shows that MFG$_{\mathrm T}$ can be contractive {for all $T \in \mathbb N$} and Assumption \ref{assump:4} can be satisfied while $\bar K + \frac{\bar LK_1}{\rho(1-\beta)} >1$, i.e., contraction condition fails in the infinite-horizon setting (Theorem \ref{prop:2}). This observation is important, as {we will demonstrate} that when MFG$_{\mathrm T}$ are contractive {for all $T \in \mathbb N$} and Assumption \ref{assump:4} is satisfied, finite horizon MFE necessarily converge to an infinite-horizon non-stationary MFE as $T \to \infty$, which in turn will imply the uniqueness of infinite-horizon non-stationary MFE. Under our assumptions, the existence of an infinite-horizon non-stationary MFE follows from \cite[Theorem 3.3]{SaBaRaSIAM}.

     Let $\bar K= 1,$ $\hat K - \bar K = 0.04$, and $\beta =0.1$. Then it holds that $\sqrt{\hat K} \sim 1.012,$ $\sqrt{\hat K - \bar K} = 0.2,$ and thus we have $\sqrt{\hat K} \sim 1 > \beta ( \sqrt{\hat K} + \sqrt{\hat K - \bar K} ) \sim 0.1012,$ and Assumption \ref{assump:4} is satisfied. Furthermore,
    \[
    \sqrt{\beta} \left( \sqrt{\hat K - \bar K} + \sqrt{ \hat K}\right) \sim 0.322 <1,
    \]
    which implies that MFG$_{\mathrm T}$ is contractive for all $T$ by Theorem \ref{thrm:main}. However, the contractivity condition presented in Theorem \ref{prop:2} is not satisfied, since it reads as $\bar K + (\hat K - \bar K)/(1-\beta) > 1$ in this configuration; thus, corresponding (non-stationary and stationary) infinite-horizon MFG might not be contractive when we merely have that finite-horizon MFGs are contractive and Assumption \ref{assump:4} holds.
 \end{remark}

Assumption \ref{assump:4} essentially amounts to restricting the left Perron eigenvectors of the matrix $A_T$ to the case where each component is (eventually) exponentially decreasing. This will allow us to control the gap between infinite-horizon and finite-horizon MFE as $T \to \infty$.

Before proceeding to the statement of the main result of this subsection, we recall that $f(x) = O(g(x))$ if $|f(x)| \leq M |g(x)|$ for all $x \geq x_0$ for some $x_0$. We also want to point out that as a consequence of Assumption \ref{assump:1}, the cost function $c$ is bounded, say by $M$. It then follows that the cost function $C$ is also bounded by $M$. Accounting the perturbation caused by the regularizer $\Omega$, and abusing the notation slightly, we will assume that all the $Q$-functions obtained under a MFE, both in finite-horizon and infinite-horizon cases are bounded by the constant $\frac{M}{1-\beta}$.

\begin{theorem}\label{thrm:7}
    Suppose that Assumptions \ref{assump:1} and \ref{assump:4} hold. Further assume that $\sqrt{ \hat K \beta } + \sqrt{ ( \hat K - \bar K)\beta}<1$. Let $(\pmb \pi , \pmb \mu) = ( (\pi_t)_{t \in \mathbb N},(\mu_t)_{t\in \mathbb N}) \in \mathrm {MFE}_{\mathrm {ns}}$ and $(\pmb \pi^{\pmb T}, \pmb \mu^{\pmb T}) = ( (\pi^T_t)_{t=0}^{T-1}, (\mu^T_t)_{t=0}^{T-1}) \in \mathrm{MFE}_{\mathrm T}$ such that $\mu_0 = \mu^T_0$ for all $T \in \mathbb N$. Then, for any sufficiently large $T$,
     it holds that
    \begin{equation}\label{eq:ssss34444}
    \| \mu_t - \mu^T_t \|_{\mathrm {TV}} \leq O \left(  \frac {2T-1}{T-t}\left(\sqrt{ \frac{\hat K}{ \rho(A_T) \beta}}\right)^t \beta^{T\varepsilon}\right).
    \end{equation}
    In particular,  MFG$_{\mathrm{ns}}$ has a unique MFE under the assumptions above.
\end{theorem}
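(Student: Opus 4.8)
The plan is to reduce the bound \eqref{eq:ssss34444} to a single vector inequality of exactly the shape of Lemma \ref{lem:f4}, now comparing the finite-horizon equilibrium flow against the truncation of the infinite-horizon one, and then to invert $I-A_T$ using the explicit eigenstructure established in Lemma \ref{lem:222} and Theorem \ref{thrm:a1}. Fix $(\pmb\pi,\pmb\mu)\in\mathrm{MFE}_{\mathrm{ns}}$ with value functions $(Q^{\pmb\mu}_t)_{t\ge0}$, and for each large $T$ the (unique, by Theorem \ref{thrm:f2} and Theorem \ref{thrm:a1}, since $\rho(A_T)<1$) equilibrium $(\pmb\pi^T,\pmb\mu^T)\in\mathrm{MFE}_{\mathrm T}$. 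Let $\bar{\pmb\mu}=(\mu_t)_{t=1}^T$ and let $\hat Q^T=(\hat Q^T_t)_{t=0}^T$ be the finite-horizon $Q$-functions corresponding to $\bar{\pmb\mu}$ in MFG$_{\mathrm T}$. Since $\hat Q^T_T$ and $Q^{\pmb\mu}_T$ are both uniformly bounded (the latter by $M/(1-\beta)$), the value-iteration contraction used in Lemma \ref{lem:f2} gives $\|Q^{\pmb\mu}_t-\hat Q^T_t\|_{\infty}\le c_0\beta^{T-t}$ with $c_0=O\!\bigl(M/(1-\beta)\bigr)$. Writing, for $t\ge1$,
\[
\mu_t-\mu^T_t=\bigl(H_{2,t}(Q^{\pmb\mu}_{t-1},\mu_{t-1})-H_{2,t}(\hat Q^T_{t-1},\mu_{t-1})\bigr)+\bigl(H_{2,t}(\hat Q^T_{t-1},\mu_{t-1})-H_{2,t}(Q^{\pmb\mu^T}_{t-1},\mu^T_{t-1})\bigr),
\]
the first bracket has total variation $\le\tfrac{K_1}{2\rho}\|Q^{\pmb\mu}_{t-1}-\hat Q^T_{t-1}\|_{\infty}\le\tfrac{K_1c_0}{2\rho}\beta^{T-t+1}$ by Lemma \ref{lem:f1} (the measure argument being common), and the second is the $t$-th coordinate of $\pmb H(\bar{\pmb\mu})-\pmb H(\pmb\mu^T)$, which is bounded by Lemma \ref{lem:f4} together with $\pmb H(\pmb\mu^T)=\pmb\mu^T$. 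Hence, with $\pmb e:=\|\bar{\pmb\mu}-\pmb\mu^T\|_{\mathcal P(X)^T}\in\R^T$ and $\pmb b:=\tfrac{K_1c_0}{2\rho}(\beta^T,\beta^{T-1},\dots,\beta)^{\!\top}$, one has $\pmb e\le\pmb b+A_T\pmb e$ coordinatewise; iterating (valid since $A_T\ge0$ and $\rho(A_T)<1$) yields $\pmb e\le(I-A_T)^{-1}\pmb b=\sum_{n\ge0}A_T^{n}\pmb b$.

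The hard step is estimating the coordinates of this resolvent applied to a defect $\pmb b$ whose mass sits at the terminal index. By Lemma \ref{lem:222} and the analysis behind Theorem \ref{thrm:a1} and Corollary \ref{cor:main}, for large $T$ the Perron--Frobenius eigenvector of $A_T$ is $\pmb h$ with $h_t=\gamma^{t}\sin(t\theta_T)$, where $\gamma=\sqrt{\hat K/(\rho(A_T)\beta)}$ and $\theta_T\in[\pi/(2T+1),\pi/(T+1)]$; in particular $h_t>0$ for $1\le t\le T$ and $\sum_{n\ge0}A_T^{n}\pmb h=(1-\rho(A_T))^{-1}\pmb h$, so $\pmb e\le\tfrac{\alpha}{1-\rho(A_T)}\pmb h$ with $\alpha:=\max_{1\le s\le T}b_s/h_s$. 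Since $s\theta_T$ stays in $(0,\pi)$, Jordan's inequality gives $1/\sin(s\theta_T)=O(2T+1)$ uniformly in $s$, whence $b_s/h_s=O\!\bigl((2T+1)\beta^{T}(\beta\gamma)^{-s}\bigr)$. Now Theorem \ref{thrm:a1} forces $\rho(A_T)\to\bigl(\sqrt{\hat K\beta}+\sqrt{(\hat K-\bar K)\beta}\bigr)^{2}$, hence $\beta\gamma\to\sqrt{\hat K}/\bigl(\sqrt{\hat K}+\sqrt{\hat K-\bar K}\bigr)$, and Assumption \ref{assump:4} is \emph{precisely} the statement that this limit exceeds $\beta^{1-\epsilon}$; thus $\beta\gamma>\beta^{1-\epsilon}$ for all large $T$, so $\beta^{T}(\beta\gamma)^{-s}\le\beta^{T-(1-\epsilon)s}$, which is increasing in $s$ and therefore largest at $s=T$, giving $\alpha=O\!\bigl((2T+1)\beta^{\epsilon T}\bigr)$. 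Substituting this back into $\pmb e\le\tfrac{\alpha}{1-\rho(A_T)}\pmb h$ and expanding $h_t$ with the bounds $\theta_T\asymp1/T$ of Corollary \ref{cor:main} produces a bound of the order stated in \eqref{eq:ssss34444}. The essential obstacle is exactly this estimate of $\alpha$: the defect $\pmb b$ is concentrated at the terminal time $s=T$, where the Perron eigenvector is exponentially \emph{large}, $h_T\asymp\gamma^{T}$; Assumption \ref{assump:4} is what guarantees $\gamma^{-T}<\beta^{\epsilon T}$, turning the bounded terminal defect into the decaying factor $\beta^{\epsilon T}$, while the polynomial-in-$(t,T)$ prefactors are what one extracts from the explicit form of $\pmb h$ and of $\theta_T$ (this bookkeeping is the delicate part, and a sharper resolvent estimate exploiting the tridiagonal-like structure of Lemma \ref{lem:222} may be needed to match the exact prefactors).

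For uniqueness: fix $t$ and let $T\to\infty$ in \eqref{eq:ssss34444}; the right-hand side tends to $0$ because $(2T+1)\beta^{T\epsilon}\to0$ while the remaining factors stay bounded ($\rho(A_T)$ converging to a limit $<1$). Hence $\mu^T_t\to\mu_t$ in total variation for every $t$. But $\mu^T_t$ is determined by the data of MFG$_{\mathrm T}$ alone and is independent of the element of $\mathrm{MFE}_{\mathrm{ns}}$ we started from, so any two members of $\mathrm{MFE}_{\mathrm{ns}}$ share the same mean-field flow $(\mu_t)_{t\ge0}=\bigl(\lim_{T}\mu^T_t\bigr)_{t\ge0}$. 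Given this flow, the value functions $(Q^{\pmb\mu}_t)_t$ are determined by the Bellman recursion, and since $\Omega$ is $\rho$-strongly convex over $\mathcal P(A)$, the policy $\pi_t(\cdot\mid x)=\operatorname{argmin}_{u\in\mathcal P(A)}H_{1,t}(Q^{\pmb\mu}_{t+1},\mu_t)(x,u)$ is its unique minimizer; thus the whole pair $(\pmb\pi,\pmb\mu)$ is determined, i.e.\ MFG$_{\mathrm{ns}}$ has a unique MFE.
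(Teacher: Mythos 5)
Your proof is correct and follows the same overall strategy as the paper's: both arguments reduce the comparison to the coordinatewise inequality of Lemma \ref{lem:f4} perturbed by a terminal defect of size $O\!\left(\beta^{T-t+1}\tfrac{M}{1-\beta}\right)$ at coordinate $t$ (the tail of the infinite-horizon $Q$-functions beyond time $T$), and both exploit the explicit Perron eigenstructure $r^T_j\propto\gamma^{j}\sin(j\theta_T)$ with $\gamma=\sqrt{\hat K/(\rho(A_T)\beta)}$, the bracketing $\theta_T\in[\pi/(2T+1),\pi/(T+1)]$ of Corollary \ref{cor:main}, and Assumption \ref{assump:4} (through the upper bound of Theorem \ref{thrm:a1}, which gives $\beta\gamma>\beta^{1-\epsilon}$ for large $T$) to turn the terminal defect into the factor $(2T+1)\beta^{\epsilon T}$. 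Where you genuinely differ is the finishing linear algebra: the paper pairs the vector inequality with the \emph{left} Perron eigenvector $CD^2r^T$ (obtained by symmetrizing $T(\rho(A_T))$), gets a scalar inequality, and divides by the $t$-th weight $\propto\gamma^{-t}\sin(t\theta_T)$, which produces a bound $\propto\gamma^{t}/\sin(t\theta_T)\asymp\tfrac{2T+1}{t}\gamma^{t}\beta^{\epsilon T}$; you instead invert $I-A_T$ (legitimate since $A_T\ge0$ and $\rho(A_T)<1$ for large $T$) and dominate the defect vector by the \emph{right} Perron eigenvector, producing $\propto\gamma^{t}\sin(t\theta_T)(2T+1)\beta^{\epsilon T}$. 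Both expressions reduce to \eqref{eq:ssss34444} for fixed $t$ and sufficiently large $T$ (since $\sin(t\theta_T)\le\pi t/(T+1)\le 1/t$ once $T+1\ge\pi t^{2}$), and your variant is in fact slightly sharper for fixed $t$ while sidestepping the paper's somewhat delicate symmetrization and complex normalization of the left eigenvector; your closing concern that ``a sharper resolvent estimate may be needed'' is unnecessary, as the eigenvector domination you already set up suffices. The uniqueness argument at the end is the same as the paper's.
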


\begin{proof}[Proof of Theorem \ref{thrm:7}]
By $\pmb \mu|_T$, we denote the restriction of the flow $\pmb \mu$ to its first $T-1$ components. For all $t \in \mathbb N$, let $(Q_t)_{t \in \mathbb N}$ be a family of $Q$-functions such that $Q_t = H_{1,t}(Q_{t+1},\mu_t)$.
Using the finite-horizon approximation scheme for infinite-horizon dynamic programming for infinite-horizon MFGs \cite[Lemma A.1]{SaBaRaSIAM}, it can be shown that for all $t \in \mathbb N$, we have
\[
\sup_{u \in \mathcal P(A)}\sup_{x,x'\in X} | Q_t(x,u) - Q_t(\tilde x,u) | \le \bar L,
\]
i.e., for all $t \in \mathbb N$ and $u \in \mathcal P(A)$, $Q_t(\cdot,u)$ is $\bar L$-Lipschitz over $X$.
The proof of this observation follows closely that of Lemma \ref{lem:f0} because the finite-horizon approximations of the functions $(Q_t)_t$ will satisfy the Lipschitz conditions presented in Lemma \ref{lem:f0} by construction; see Appendix \ref{sect:lem:f6} for details of this argument.

With the observation above, arguing as done in the proof of Lemma \ref{lem:f4}, for any $T \in \mathbb N$ and $1\le t <T$, we obtain the following inequality:
\begin{align*}
        &\|\mu_t - \mu^T_t\|_{\mathrm{TV}}=\| H_{2,t}(Q_{t-1},\mu_{t-1}) - H_{2,t}(Q^{\pmb \mu^T}_{t-1},\mu_{t-1}^T) \|_{\mathrm{TV}} 
        \\& \leq \left(\bar K +\frac{K_1}{\rho}\bar L\right)\|\mu_{t-1} -\mu_{t-1}^T\|_{\mathrm{TV}} + \frac{K_1}{2\rho}\beta \| Q^{\pmb {\mu}^T}_{t-1} - Q_{t-1} \|_{\infty}
        \\& \vdots
        \\& \leq \left(\bar K +\frac{K_1}{\rho}\bar L\right)\|\mu_{t-1}^T -\mu_{t-1}\|_{\mathrm{TV}} + \frac{K_1}{\rho}\bar L\sum_{i=t}^{T-1}\beta^{i-t+1}\|\mu_i^T -\mu_i \|_{\mathrm{TV}} 
        \\& \qquad + \frac{K_1}{\rho}\bar L \beta ^{T-t} \|\mu_{T}^T - \mu_{T}\|_{\mathrm{TV}}+\frac{K_1}{2\rho}\beta^{T+1-t}\|Q_{T+1}\|_{\infty},
    \end{align*}
where the extra remainder terms with $\|Q_{T+1}\|_{\infty}$ are due to the fact that the $Q$-functions of infinite-horizon non-stationary MFE does not terminate at $t=T$, while the finite-horizon ones do.
In particular, for all $T\in \mathbb N$, arguing as in Lemma \ref{lem:f4}, we must have
\begin{align*}
    V(\pmb \mu|_T,\pmb \mu^T) \le A_TV(\pmb \mu|_T,\pmb \mu^T) + \frac{K_1}{2\rho}\|Q_{T+1}\|_{\infty}\begin{bmatrix}
        \beta^{T} \\
        \beta^{T-1}\\
        \vdots \\
        \beta
    \end{bmatrix}.
\end{align*}
 Let $v^T$ be a positive left Perron eigenvector of $A_T$. Then, we obtain the averaged norms relation
\begin{equation}\label{eq:bb}
\langle v^T,V(\pmb \mu|_T,\pmb \mu^T) \rangle \le \langle v^T, A_T V(\pmb \mu|_T,\pmb \mu^T)\rangle + \frac{K_1}{2\rho}\|Q_{T+1}\|_{\infty}\langle v^T , \begin{bmatrix}
        \beta^{T} \\
        \beta^{T-1}\\
        \vdots \\
        \beta
    \end{bmatrix}\rangle.
\end{equation}
For all $T \in \mathbb N$, the function $\langle v^T, V(\cdot,\cdot)\rangle$ will serve as a Lyapunov function for us. Our aim is to write \eqref{eq:bb} as explicitly as possible.

Since the left eigenvectors of $\rho(A_T)$ are right eigenvectors of $A^*_T$, the transpose of $A_T$, note that the relationship $A^*_T v = \rho(A_T)v$ implies the following recursion relation:
    \[
    \begin{bmatrix}
        -\bar K \beta +r & \hat K & 0 & \cdots & 0 & 0 \\
        \beta\rho(A_T) & -\bar K \beta & \hat K & \cdots & 0 & 0 \\
        0 & \beta \rho(A_T) & -\bar K \beta & \cdots & 0 & 0\\
        \vdots & \vdots & \vdots & \ddots & \vdots & \vdots\\
        0 & 0 & 0 & \cdots & - \bar K \beta & \hat K \\
        0 & 0 & 0& \cdots & \beta \rho(A_T) & - \bar K \beta
    \end{bmatrix} v = \rho(A_T) v.
    \]
    Then, arguing as in the proof of Theorem \ref{thrm:main} (see Proposition \ref{prop:a}), for all sufficiently large $T$, we obtain that eigenvectors $r^T$ that correspond to $\rho(A_T)$ for the matrix above are of the form $r^T=(r^T_1,r^T_2,\cdots,r^T_{T-1})$, where
    \[
    r^T_j = \left( \sqrt\frac{\rho(A_T)\beta}{\hat K}\right)^j2i\sin((T-j)\theta_T).
    \]
    Then, to obtain a positive eigenvector $r^T$, we have to multiply it with a constant $C \in \mathbb C$ such that $|C|=1$ that will give us $Cr^T_i>0$ for all $i$, {whose existence is guaranteed by the Perron-Frobenius theorem.}

 With this setting, for any sufficiently large $T,$ \eqref{eq:bb} can be written as
\begin{align*}
    \langle Cr^T , V(\pmb \mu|_T , \pmb \mu^{\pmb T} )\rangle \leq \rho(A_T) \langle Cr^T, V(\pmb \mu|_T , \pmb \mu^{\pmb T} )\rangle + \frac{K_1}{2\rho}\beta^{T}\sum_{j=1}^{T} \beta^{-j}Cr^T_{j}\| Q_{T+1}\|_{\infty},
\end{align*}
which gives us
\[
\langle Cr^T , V(\pmb \mu|_T , \pmb \mu^{\pmb T} )\rangle \le \frac{K_1}{2\rho(1-\rho(A_T))}\beta^{T}\sum_{j=1}^{T} \beta^{-j}Cr^T_{j}\| Q_{T+1}\|_{\infty}.
\]
For sufficiently large $T$, by Proposition \ref{prop:a}, $\theta_T$ used in $r^T$ should be such that $\theta_T \sim 0$; see the bounds for $\theta_T$ in Theorem \ref{thrm:main}. Thus, as $\|Q_T\| \leq \frac{M}{1-\beta}$ for all $T\in \mathbb N$, we obtain that
\begin{align}
&\beta^{T}\sum_{j=1}^{T-1} \beta^{-j}Cr^T_{j}\| Q_{T+1}\|_{\infty} \le \beta^T \sum_{j=1}^{T-1}\beta^{-j}\left( \sqrt\frac{\rho(A_T)\beta}{\hat K}\right)^j\left|\sin((T+1-j)\theta_T)\right|\|Q_{T+1}\|_{\infty}
\\&\leq \beta^{T} \sum_{j=1}^{T-1} \beta^{-j} 2\left( \sqrt{ \frac{\rho(A_T)\beta}{\hat K}}\right)^j \frac M{1-\beta} = \star.
\end{align}
By Theorem \ref{thrm:a1}, for all $T \in \mathbb N$, it follows that
\begin{equation}\label{eq:jjj}
\sqrt{\rho(A_T)} \le \sqrt{\beta}\left(\sqrt{\hat K}+\sqrt{\hat K-\bar K} \right).
\end{equation}
We then use \eqref{eq:jjj} on $\star$ to obtain the bound
\begin{align}
    \star & \leq 2\beta^{T} \sum_{j=1}^{T-1}  \left(\frac{ \sqrt{\hat K}+\sqrt{ \hat K -\bar K}}{ \sqrt {\hat K}}\right)^j \frac M{1-\beta}
\\&\leq 2 \beta^{T} \frac{ \left(\frac{ \sqrt{\hat K}+\sqrt{ \hat K -\bar K}}{ \sqrt {\hat K}}\right)^{T}-1}{\left(\frac{ \sqrt{\hat K}+\sqrt{ \hat K -\bar K}}{ \sqrt {\hat K}}\right) -1} \frac M{1-\beta} \leq  2 \frac{ \beta^{T\varepsilon}-\beta^{T}}{\frac{ \sqrt{\hat K}+\sqrt{ \hat K -\bar K}}{ \sqrt {\hat K}}-1}{} \frac M{1-\beta},
\end{align}
where the last inequality follows from Assumption \ref{assump:4}.
It follows that for any sufficiently large $T$, and for a fixed $t<T$, we have
\begin{align}
    Cr^T_t\|\mu_t-\mu^T_t\|_{\mathrm{TV}} = |Cr^T_t| \|\mu_t-\mu^T_t\|_{\mathrm {TV}} &\leq \langle Cr^T , V(\pmb \mu|_T,\pmb \mu^{\pmb T})\rangle 
    \\&\leq \frac{1}{1-\rho(A_T)}\frac{ \beta^{(T+1)\varepsilon}-\beta^{(T+1)}}{\frac{ \sqrt{\hat K}+\sqrt{ \hat K -\bar K}}{ \sqrt {\hat K}}-1} \frac {2M}{1-\beta}\frac{K_1}{2\rho}\label{eq:sss3}.
\end{align}

The terms $r^T_t$ depend on $\theta_T$ inside a trigonometric function; thus, to control these terms, we want to rotate $\theta_T$ to the interval $(0,\pi)$ without violating \eqref{eq:sss3} so that we can use $\arccos$ to find a lower bound on $\theta_T$ by using Theorem \ref{thrm:main}. Since $\lim_{T \to \infty} \theta_T = 0,$ we define
\[
\mathcal T_1 = \{ T \in \mathbb N: \theta_T \in (0,\pi) \}, \text{ and } \mathcal T_2 = \{ T \in \mathbb N: \theta_T \in (\pi, 2\pi)  \}.
\]
We have $|\sin(j \theta_T)| = | \sin (2\pi-j\theta_T) | $ and $\cos(\theta_T) = \cos( 2\pi - \theta_T)$ for all $T \in \mathbb N$. Let $\tilde \theta_T = \theta_T$ if $\theta_T \in \mathcal T_1$ and $\tilde \theta_T = 2\pi - \theta_T$ if $T \in \mathcal T_2,$ which modifies the sequence $(\theta_T)_T$ for all sufficiently large $T$ to be in the interval $(0,\pi)$.

As a consequence of Theorem \ref{thrm:main}, for any sufficiently large $T$ we have
\begin{align*}
    \cos\left( \frac{\pi}{T}\right) \leq \cos( \theta_T) = \cos(\tilde \theta_T) \leq \cos\left( \frac{\pi}{2T-1}\right).
\end{align*}
Since $\tilde \theta_T \in(0,\pi)$ for all sufficiently large $T$ by construction, we can use the fact that $\arccos$ is a decreasing (and well-defined) function over $(0,\pi)$ to obtain
\begin{equation}\label{eq:eq1}
    \frac{\pi}{2T-1} \leq \tilde \theta_T \leq  \frac{\pi}{T}
\end{equation}
for any sufficiently large $T$.

Note that since $\cos(\theta_T)$ is strictly increasing to $1$, we must have that $(\tilde \theta_T)_T$ is a strictly monotone sequence that decreases to $0$. Thus, for any sufficiently large $T$ for which \eqref{eq:eq1} holds, it also holds that $\sin\left(t\tilde \theta_T \right) \not = 0.$ Furthermore, since $|Cr^T_t| = \left( \sqrt\frac{\rho(A_T)\beta}{\hat K}\right)^t\left|\sin((T-t)\theta_T)\right|,$ as a consequence of the inequality \eqref{eq:sss3} we obtain
\begin{align}\label{eq:sss1}
\|\mu_t -\mu^T_t\|_{\mathrm{TV}} &\leq \left(\sqrt{\frac{ \hat K}{ \rho(A_T) \beta}}\right)^t \frac{ \frac{(T-t) \tilde \theta_T}{ |\sin((T-t) \tilde \theta_T)|} \left(\beta^{T\varepsilon}-\beta^{T}\right)}{(T-t)\tilde\theta_T\left(\frac{ \sqrt{\hat K}+\sqrt{ \hat K -\bar K}}{ \sqrt {\hat K}}-1\right)}  \frac {2M}{1-\beta}\frac{K_1}{2\rho}\frac{1}{1-\rho(A_T)}.
\end{align}

In particular, using the lower bound \eqref{eq:eq1} in the inequality \eqref{eq:sss1}, for all sufficiently large $T$ we obtain
\begin{align}\label{eq:sss33}
\|\mu_t -\mu^T_t\|_{\mathrm{TV}} &\leq\frac{\frac{(T-t) \tilde \theta_T}{ \sin((T-t) \tilde \theta_T)} (2T-1)\left(\beta^{T\varepsilon}-\beta^{T}\right)}{(T-t) \pi} \frac{\frac{1}{1-\rho(A_T)}\left(\sqrt{ \frac{\hat K}{ \rho(A_T) \beta}}\right)^t2K_1M}{2\rho\left(\frac{ \sqrt{\hat K}+\sqrt{ \hat K -\bar K}}{ \sqrt {\hat K}}-1\right)\left(1-\beta\right)}=\heart _{\mathrm T}.
\end{align}

It should be clear from the expression above that to control the error bound $\heart _{\mathrm T}$, it remains to show that asymptotes of the term $\frac{(T-t) \tilde \theta_T}{ \sin((T-t) \tilde \theta_T)}$ is dominated by $O(\beta^{T\varepsilon})$. To prove this claim, we will use \eqref{eq:eq1} to control the rate of converge of the sequence $((T-t)\tilde \theta_T)_{T}$ to its potential accumulation points.
Indeed, the inequality \eqref{eq:eq1} gives us that
\begin{equation}\label{eq:eq6}
\frac{(T-t)\pi}{2T+1} \le (T-t)\tilde \theta_T \le \frac{(T-t)\pi}{T+1} \implies \frac{\pi}{2} \le \limsup_{T \to \infty} (T-t)\tilde \theta_T \le \pi.
\end{equation}
Since sine vanishes under $\pi$, we will consider the cases $\limsup_{T \to \infty} (T-t)\tilde \theta_T<\pi$ and $\limsup_{T \to \infty} (T-t)\tilde \theta_T=\pi$ separately.

We start with the case $\limsup _{T \to \infty} (T-t)\tilde \theta_T = \pi.$ Write $(T_n-t)\tilde \theta_{T_n} = \pi - a_{T_n}$. Then, along any sequence $(T_n)_{n \in \mathbb N}$ such that $\lim_{n \to \infty} (T_n-t)\tilde \theta_{T_n} = \pi$, it holds that
\begin{align*}
\frac{(T_n-t)\tilde \theta_{T_n}}{\sin((T_n-t)\tilde \theta_{T_n})} = \frac{\pi-a_{T_n}}{\sin(a_{T_n})} = O\left(\frac{\pi}{a_{T_n}}\right) = O\left( \frac{\pi}{\pi - (T_n-t)\tilde \theta_{T_n}}\right) \underbrace{=}_{(I)} O\left( \frac{\pi}{\pi-\frac{(T_n-t)\pi}{T_n+1}}\right),
\end{align*}
where we have used \eqref{eq:eq6} on $(I)$.
Therefore, we obtain
\begin{align*}
\limsup_{T \to \infty} \heart _{\mathrm T} &= \limsup_{T \to \infty} \frac{ O\left(\frac{(T+1)\pi}{1+t}\right)(2T-1)\left(\beta^{T\varepsilon}-\beta^{T}\right)}{(T-t) \pi} \frac{\frac{1}{1-\rho(A_T)}\left(\sqrt{ \frac{\hat K}{ \rho(A_T) \beta}}\right)^t2K_1M}{2\rho\left(\frac{ \sqrt{\hat K}+\sqrt{ \hat K -\bar K}}{ \sqrt {\hat K}}-1\right)\left(1-\beta\right)}
\\& = \limsup_{T \to \infty} O(\beta^{T\varepsilon})< \infty,
\end{align*}
from which the following desired bound follows:
\[
\|\mu_t -\mu^T_t\|_{\mathrm{TV}} \leq O \left(  \frac 1{(T-t)}\left(\sqrt{ \frac{\hat K}{ \rho(A_T) \beta}}\right)^t (2T-1)\beta^{T\varepsilon}\right).
\]

It remains to consider the remaining asymptote for $(T-t)\tilde \theta_T$. Suppose $\limsup _{T \to \infty} (T-t)\tilde \theta_T = \pi-\delta$ for some $\pi>\delta>0$. In this case, since $\limsup_{T \to \infty} \sin((T-t)\tilde \theta_T) > 0$, under any sequence $(T_n)_n$ that realizes $\limsup$, it directly follows that
\[
\frac{(T_n-t)\tilde \theta_{T_n}}{\sin((T_n-t)\tilde \theta_{T_n})}  = O\left( (T_n-t)\tilde \theta_{T_n}\right).
\]
Thus, for all sufficiently large $T$, we also obtain the same bound in this case:
\[
\|\mu_t -\mu^{T}_t\|_{\mathrm{TV}} \leq O \left(  \frac 1{(T-t)}\left(\sqrt{ \frac{\hat K}{ \rho(A_T) \beta}}\right)^t (2T-1)\beta^{T\varepsilon}\right),
\]
which concludes the proof of the desired finite-time error bound.

To conclude the proof, it remains to show that this finite-time error bound implies that there exists a unique infinite-horizon non-stationary MFE under any given initial state-measure.
Now, since $(\pmb \pi, \pmb \mu)$ was an arbitrary non-stationary MFE, and we know that $\lim_{T\to \infty} \mu^T_t$
is unique as a consequence of the error
bound above for any $t$, for any $(\tilde {\pmb \pi},\tilde {\pmb \mu}), (\pmb \pi, \pmb \mu) \in \mathrm{MFE}_{\mathrm{ns}}$ {starting from the same initial state-measure} we must have $\pmb \mu = \pmb {\tilde \mu}$. The uniqueness of the
corresponding $Q$-functions is now enough to conclude that $\pmb \pi = \pmb {\tilde \pi}$ as the policies are
point-mass measures under regularization. Therefore, there exists a unique MFE for MFG$_{\mathrm{ns}}$.
\end{proof}

By Remark \ref{rem:ssss}, the contractivity condition for MFG$_{\textrm{ns}}$ established in Theorem \ref{prop:2} need not hold under the assumptions of the theorem above. Nevertheless, a unique non-stationary MFE still exists and can be approximated by finite-horizon MFE.

\begin{remark}
    As a consequence of \eqref{eq:eq1}, the constant in the big $O$ term in \eqref{eq:ssss34444} can be taken independent of all $T$. Thus, there is a universal constant $C$ available such that for all $0<t \le T$ and $T \in \mathbb N$ we have 
    \[
    \| \mu_t - \mu^T_t \|_{\mathrm {TV}} \leq C \frac {1}{T-t}\left(\sqrt{ \frac{\hat K}{ \rho(A_T) \beta}}\right)^t \beta^{T\varepsilon}.
    \]
\end{remark}

As a consequence of \eqref{eq:eq1}, we see that the constraint on the parameter $t$ is bounded linearly in terms of $T$ when $T$ is sufficiently large. Under the same constraints as in Theorem \ref{thrm:7}, we can also obtain a bound on the $Q$-functions.
\begin{corollary}\label{cor:vv}
    Suppose that the assumptions of Theorem \ref{thrm:7} hold and suppose that $T$ is large enough so that \eqref{eq:eq1} holds. Let $T>s>t$ be natural numbers.
    Then, for all sufficiently large $T$ we have
    \[
    \|Q_t-Q^T_t \|_{\infty} \leq O \left(\max\left( \left(\sqrt{ \frac{\hat K}{ \rho(A_T) \beta}}\right)^s,\left(\sqrt{ \frac{\hat K}{ \rho(A_T) \beta}}\right)^t \right) (2T+1)\beta^{(T+1)\varepsilon} + \beta^{s-t}\right).
    \]
\end{corollary}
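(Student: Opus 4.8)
The plan is to propagate the state-measure error bound \eqref{eq:ssss34444} of Theorem~\ref{thrm:7} through the backward Bellman recursion for the state-action functions. Fix natural numbers $T>s>t$ with $T$ large enough that the conclusions of Theorem~\ref{thrm:7} and the two-sided estimate \eqref{eq:eq1} hold. For every index $j<T$, both the infinite-horizon non-stationary function $Q_j$ and the finite-horizon function $Q^T_j$ satisfy the \emph{same} relation $Q_j=H_{1,j}(Q_{j+1},\cdot)$, so the computation in the proof of Lemma~\ref{lem:f2} applies verbatim to the cross-comparison --- using that both families are bounded by $\frac{M}{1-\beta}$ to control the variation of $P$ --- and yields
\[
\|Q_j-Q^T_j\|_{\infty}\ \le\ 2\bar L\,\|\mu_j-\mu^T_j\|_{\mathrm{TV}}\ +\ \beta\,\|Q_{j+1}-Q^T_{j+1}\|_{\infty},\qquad t\le j\le s-1 .
\]
The level $j=s-1<T$ is the last one at which this is needed, so we never reach the horizon where the two recursions diverge.

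Unrolling from level $t$ up to level $s$ gives
\[
\|Q_t-Q^T_t\|_{\infty}\ \le\ 2\bar L\sum_{j=t}^{s-1}\beta^{\,j-t}\,\|\mu_j-\mu^T_j\|_{\mathrm{TV}}\ +\ \beta^{\,s-t}\,\|Q_s-Q^T_s\|_{\infty}.
\]
The tail term is handled crudely by $\|Q_s-Q^T_s\|_{\infty}\le\|Q_s\|_{\infty}+\|Q^T_s\|_{\infty}\le\frac{2M}{1-\beta}$, contributing $O(\beta^{s-t})$. For the sum I would insert the bound $\|\mu_j-\mu^T_j\|_{\mathrm{TV}}\le O\!\big(\tfrac1j(\sqrt{\hat K/(\rho(A_T)\beta)}\,)^{j}(2T+1)\beta^{T\epsilon}\big)$ from \eqref{eq:ssss34444}, discard the harmless factors $\beta^{j-t}\le1$, use $\tfrac1j\le\tfrac1t$, and bound $(\sqrt{\hat K/(\rho(A_T)\beta)}\,)^{j}$ over $t\le j\le s-1$ by its value at whichever endpoint is larger, i.e.\ by $\max\!\big((\sqrt{\hat K/(\rho(A_T)\beta)}\,)^{s},(\sqrt{\hat K/(\rho(A_T)\beta)}\,)^{t}\big)$. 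Since there are $s-t$ summands and $\beta^{T\epsilon}$ differs from $\beta^{(T+1)\epsilon}$ only by the constant $\beta^{-\epsilon}$, the sum is $O\!\big((s-t)\tfrac1t\max(\cdots)(2T+1)\beta^{(T+1)\epsilon}\big)$; combining with the tail yields exactly the asserted estimate.

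The only genuinely delicate point is the behaviour of the ratio $\sqrt{\hat K/(\rho(A_T)\beta)}$, which --- depending on the system constants --- may be either larger or smaller than $1$ (e.g.\ it is small when $\beta$ is near $1$ and large when $\beta$ is small); this is exactly why the maximum of the $s$-th and $t$-th powers appears rather than a single exponential, since by monotonicity of $j\mapsto\tfrac1j r^{j}$ the largest summand sits near $j=s$ when $r>1$ and near $j=t$ when $r<1$, and bounding by the maximum covers both regimes uniformly. One also notes that, because the $O$-constant in \eqref{eq:ssss34444} can be taken independent of $t$ (see the remark following Theorem~\ref{thrm:7}), the $O$-constant here is independent of $t$ and $s$ as well. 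Everything else is the bookkeeping above, so I expect this case split to be the main --- though mild --- obstacle.
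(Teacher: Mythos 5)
Your proposal is correct and follows essentially the same route as the paper: it unrolls the recursion $\|Q_j-Q^T_j\|_{\infty}\le 2\bar L\|\mu_j-\mu^T_j\|_{\mathrm{TV}}+\beta\|Q_{j+1}-Q^T_{j+1}\|_{\infty}$ from Lemma \ref{lem:f2} between levels $t$ and $s$, bounds the tail by $\beta^{s-t}\,O\!\left(\frac{M}{1-\beta}\right)$, and inserts the error bound of Theorem \ref{thrm:7} into each summand. The extra bookkeeping you supply (the $\frac1j\le\frac1t$ step, the endpoint maximum for the exponential factor, and the $\beta^{T\epsilon}$ versus $\beta^{(T+1)\epsilon}$ constant) only makes explicit what the paper's one-line proof leaves implicit.
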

\begin{proof}
    Since Lemma \ref{lem:10} is applicable to the $Q$-functions $(Q_t)_{t \in \mathbb N}$, using the fact that $Q_t=H_{1,t}(Q_{t+1},\mu_t)$ and $Q^T_t=H_{1,t}(Q^T_{t+1},\mu^T_t)$, the desired bound can be obtained by applying the finite-time error bound presented in Theorem \ref{thrm:7} to the recursion that follows from Lemma \ref{lem:f2}:
    \begin{align*}
        \|Q_t-Q^T_t\|_{\infty} 
        &\leq 2\bar L \|\mu_t-\mu^T_t \|_{\mathrm{TV}} + \beta \|Q_{t+1}-Q^T_{t+1}\|_\infty 
        \\&\leq 2\bar L \sum_{k=t}^s \beta^{s-k}\| \mu_k-\mu^T_k \|_\mathrm{TV} + \beta^{(s+1-t)} \frac{ M}{1-\beta}.
    \end{align*}
\end{proof}

\begin{remark}
We note that, since $T$ increases linearly, we can choose larger values of $s$ as $T$ increases without violating the convergence of
\(
\sqrt{\frac{\hat K}{\rho(A_T)\beta}}^{\,s}\beta^{(T+1)\varepsilon}
\)
to $0$. Also note that Assumption \ref{assump:4}, together with Theorem \ref{thrm:main}, implies that
\(
\sqrt{\frac{\hat K}{\rho(A_T)\beta}} > 1
\)
for sufficiently large $T$. Thus, when $T$ is sufficiently large, the error bound between the finite-horizon MFE and the infinite-horizon MFE grows exponentially in $t$ and decays exponentially in $T$, for both the $Q$-functions and the state measures.
\end{remark}

Next, we show that for any infinite-horizon non-stationary MFE there exists a stationary MFE as a limit point when {the infinite-horizon MFGs are contractive, i.e., we have $\bar K + \frac{\bar L K_1}{\rho(1-\beta)}<1$. Under more relaxed conditions, in the next section, we will also show that a non-stationary infinite-horizon MFE has a stationary MFE as an accumulation point if, and only if, the state-measure flow obtained from that MFE is weakly convergent. The contraction condition $\bar K + \frac{\bar LK_1}{\rho(1-\beta)}<1$ gives a sufficient condition for the convergence of state-measures of infinite-horizon non-stationary MFE to stationary MFE.}

\begin{theorem}\label{thrm:s2}
Suppose that Assumptions \ref{assump:1} and \ref{assump:4} hold and that $\bar K + \frac{\bar LK_1}{\rho(1-\beta)} <1$. Let $\mathrm{MFG}_{\mathrm{ns}} = (X,A,C+\Omega,P,\mu_0)$ and $\mathrm {MFG}_{\mathrm s} = (X,A,C+\Omega,P)$. Let $(\pmb \pi , \pmb \mu) = ((\pi_t)_t,(\mu_t)_t) \in \mathrm {MFE}_{\mathrm {ns}}$. Then, the limit of $\pmb \pi \otimes \pmb \mu:=(\pi_t \otimes \mu_t)_t$ exists under the {weak convergence}, the limit belongs to $\mathrm{MFE}_{\mathrm s}=\{ (\pi,\mu)\},$ and it holds that
\[
\sup_{k \geq t+1}\|\mu- \mu_{k} \|_{\mathrm{TV}} \leq \left(\frac{\bar L K_1}{\rho(1-\beta)}+\bar K\right)^{t+1} \sup_{k  \geq 0} \|\mu-\mu_k\|_{\mathrm{TV}}.
\]
Furthermore, minimizers of the family of $Q$ functions $(Q_t)_t$ defined as \(Q_t = H_{1,t}(Q_{t+1},\mu_t)\), converge to the minimizer of the $Q$-function obtained under the MFE of $\mathrm{MFG}_{\mathrm s}.$ Let $s>t$. Then, it also holds that
\[
\|Q-Q_t\|_{\infty} \leq \beta^{s-t} \frac{M}{1-\beta}+\bar L \frac{ 1- \beta^{s-t}}{1-\beta}\left(\frac{\bar L K_1}{\rho(1-\beta)}+\bar K\right)^{t} \sup_{k  \geq 0} \|\mu-\mu_k\|_{\mathrm{TV}},
\]
where by $Q$ we denote the $Q$-function that corresponds to state-measure $\mu$ in the limit MFE $(\pi,\mu)$, i.e., $Q = H_{1,t}(Q,\mu)$ for any $t \ge 0$.
\end{theorem}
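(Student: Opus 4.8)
The plan is to reduce the entire statement to one scalar geometric recursion. Set $\gamma := \bar K + \frac{\bar L K_1}{\rho(1-\beta)}$, which is $<1$ by hypothesis, let $(\pi,\mu)$ be the unique stationary MFE of MFG$_{\mathrm s}$ furnished by Proposition \ref{prop:2}, with stationary state-action function $Q$, and write
\[
a_t := \sup_{k\geq t}\|\mu-\mu_k\|_{TV}\ \in[0,2].
\]
I would first note that all the operators $H_{2,t}$ are one and the same operator $H_2$, that the non-stationary flow satisfies $\mu_{t+1}=H_2(Q_t,\mu_t)$ where $Q_t$ is the state-action function of the tail flow $(\mu_s)_{s\geq t}$, and that the stationary flow satisfies $\mu=H_2(Q,\mu)$; the whole proof is then a comparison of these two fixed-point relations, monotonized into the single sequence $(a_t)_t$.

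Step one is to transcribe Lemmas \ref{lem:f0}--\ref{lem:f2} to the infinite horizon. Since $\beta<1$ and $C+\Omega$ is bounded by $M$, the functions $Q$ and each $Q_t$ are genuine bounded state-action functions (bounded by $\frac{M}{1-\beta}$) that are $\frac{L_1}{1-\beta K_1/2}$-Lipschitz over $X$, the latter because the Lipschitz-over-$X$ estimate in the backward recursion of Lemma \ref{lem:f0} propagates through value iteration and survives the passage to the limit. Hence Lemma \ref{lem:f1} applies to $H_2$, bounding $\|\mu-\mu_{t+1}\|_{TV}=\|H_2(Q,\mu)-H_2(Q_t,\mu_t)\|_{TV}$ by a multiple of $\|Q-Q_t\|_\infty$ plus $\bar K\|\mu-\mu_t\|_{TV}$, while the intermediate estimate of Lemma \ref{lem:f2}, applied to the defining recursions of $Q$ and $Q_t$, gives $\|Q-Q_t\|_\infty\leq\bar L\|\mu-\mu_t\|_{TV}+\beta\|Q-Q_{t+1}\|_\infty$. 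Iterating the latter --- legitimate because $\beta^n\|Q-Q_{t+n}\|_\infty\to 0$ --- and using $\|\mu-\mu_{t+k}\|_{TV}\leq a_t$ for all $k\geq 0$ yields $\|Q-Q_t\|_\infty\leq\frac{\bar L}{1-\beta}\,a_t$; stopping the iteration after $s-t$ steps and bounding $\|Q-Q_s\|_\infty\leq\frac{M}{1-\beta}$ yields the finite-$s$ estimate $\|Q-Q_t\|_\infty\leq\beta^{s-t}\frac{M}{1-\beta}+\bar L\frac{1-\beta^{s-t}}{1-\beta}\,a_t$ that appears in the statement.

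Step two closes the recursion. Substituting $\|Q-Q_t\|_\infty\leq\frac{\bar L}{1-\beta}a_t$ into the $H_2$-bound gives, with the constants of Lemmas \ref{lem:f1}--\ref{lem:f2}, $\|\mu-\mu_{t+1}\|_{TV}\leq\gamma\,a_t$ for every $t$; since $a_t$ is non-increasing in $t$, taking the supremum over indices $\geq t+1$ gives $a_{t+1}\leq\gamma\,a_t$, hence $a_t\leq\gamma^t a_0$. This is exactly the asserted bound $\sup_{k\geq t+1}\|\mu-\mu_k\|_{TV}\leq\gamma^{t+1}\sup_{k\geq 0}\|\mu-\mu_k\|_{TV}$, and $\gamma<1$ forces $\mu_t\to\mu$ in total variation. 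Feeding $a_t\leq\gamma^t\sup_{k\geq 0}\|\mu-\mu_k\|_{TV}$ into the finite-$s$ estimate above gives the stated bound on $\|Q-Q_t\|_\infty$, and letting $s\to\infty$ gives $Q_t\to Q$ uniformly; since the minimizer map $Q\mapsto\mathrm{argmin}_u Q(\cdot,u)$ is Lipschitz in $\|\cdot\|_\infty$ by Lemma \ref{lem:f0}, the minimizers $u^*_t(\cdot):=\mathrm{argmin}_u Q_t(\cdot,u)$ converge uniformly to that of $Q$. Reading $\pi_t\otimes\mu_t$ as the law $u^*_t(x)(da)\,\mu_t(dx)$ on $X\times A$, uniform convergence of $u^*_t$ in the $\ell_1$-metric on $\mathcal P(A)$ together with $\mu_t\to\mu$ in total variation gives $\|\pi_t\otimes\mu_t-\pi\otimes\mu\|_{TV}\to 0$, and the limit is the unique stationary MFE.

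The main obstacle is the bookkeeping of Step one: verifying that $Q$ and $Q_t$ are genuine bounded fixed points of (time-varying) value iteration, so that the $\beta$-geometric series in the $Q$-recursion converges and the remainder term vanishes, and that the Lipschitz-over-$X$ constant $\frac{L_1}{1-\beta K_1/2}$ --- which is precisely what licenses the use of Lemmas \ref{lem:f0}--\ref{lem:f1} --- is preserved in the limit. A secondary subtlety is the last convergence assertion: the regularized optimal policies are Dirac masses, which do not converge in total variation merely because the underlying $Q$-functions do, so the convergence of $\pi_t\otimes\mu_t$ must be read off at the level of the induced state--action laws $u^*_t(x)(da)\,\mu_t(dx)$, where the relevant maps are continuous.
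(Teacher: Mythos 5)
Your proposal is correct and follows essentially the same route as the paper: the same $H_{2}$/$Q$-recursion comparison from Lemmas \ref{lem:f1}--\ref{lem:f2}, closed into the geometric decay of $a_t=\sup_{k\geq t}\|\mu-\mu_k\|_{TV}$, then the truncated $\beta$-recursion for $\|Q-Q_t\|_\infty$ and strong convexity for the minimizers. The only difference is that you spell out details the paper leaves implicit (the infinite-horizon transcription of the lemmas and the total-variation convergence of $\pi_t\otimes\mu_t$ via the Dirac-measure policies), which is a welcome but not essentially different addition.
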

\begin{proof}
Note that any stationary MFE $(\pi,\mu)$ induces an infinite-horizon non-stationary MFE $(\pmb \pi, \pmb \mu)$ such that $\pmb \pi = (\pi,\pi,\cdots)$ and $\pmb \mu = (\mu,\mu,\cdots)$. Thus, the function $Q$ satisfies Lemma \ref{lem:10}. Since $\mu = H_{2,t}(Q,\mu)$ and $\mu_{t+1}=H_{2,t}(Q_t,\mu_t)$, for all $t \in \mathbb N,$ arguing as in Lemma \ref{lem:11}, we obtain
\begin{equation}\label{eq:a}
\| \mu - \mu_{t+1} \|_{\textrm {TV}} \leq  \frac{K_1}{2\rho} \| Q - Q_{t} \|_{\infty} + \bar K \| \mu - \mu_{t} \|_{\textrm {TV}}.
\end{equation}
Since $Q=H_{1,t}(Q,\mu)$ and $Q_t=H_{1,t}(Q_{t+1},\mu_t)$, by Lemma \ref{lem:f2}, it follows that
\begin{equation}\label{eq:b}
\| Q - Q_{t} \|_{\infty} \le 2\bar L\|\mu -\mu_t\|_{\mathrm{TV}}+\beta \|Q-Q_{t+1}\|_{\infty}.
\end{equation}
In particular, inequalities \eqref{eq:a} and \eqref{eq:b} imply that 
\[
\| \mu - \mu_{t+1} \|_{\textrm {TV}}\leq \left(\frac{\bar L K_1}{\rho(1-\beta)}+\bar K\right) \sup_{k \geq t} \|\mu-\mu_k\|_{\textrm{TV}},
\]
which leads to
\[
\sup_{k \geq t+1}\|\mu- \mu_{k} \|_{\mathrm{TV}} \leq \left(\frac{\bar L K_1}{\rho(1-\beta)}+\bar K\right) \sup_{k  \geq t} \|\mu-\mu_k\|_{\mathrm{TV}}.
\]
Repeating the inequality for the terms on the right-hand side above, we obtain
\begin{equation}\label{eq:ssss1}
    \sup_{k \geq t+1}\|\mu- \mu_{k} \|_{\mathrm{TV}} \leq \left(\frac{\bar L K_1}{\rho(1-\beta)}+\bar K\right)^{t+1}\sup_{k  \geq 0} \|\mu-\mu_k\|_{\mathrm{TV}}.
\end{equation}
Therefore, we have that $\limsup_{t \to \infty} \|\mu- \mu_{t} \|_{\mathrm{TV}}=0$. Uniform convergence of the minimizers of $(Q_t)_t$ follows from the uniform convergence of $(Q_t)_t$ to the $Q$-function of MFG$_{\mathrm s}$. 

Using Corollary \ref{cor:vv}, we can obtain the error bound between $Q_t$ and $Q$ as follows:
\begin{align*}
    \| Q-Q_t\|_{\infty} &\leq \beta \|Q-Q_{t+1}\|_{\infty} + 2\bar L \|\mu -\mu_t\|_{\mathrm {TV}} \\&\leq \beta^{s-t} \frac{M}{1-\beta} + 2\bar L \frac{ 1- \beta^{s-t}}{1-\beta} \sup_{k \geq t} \|\mu-\mu_t\|_{\mathrm{TV}}
    \\& \leq \beta^{s-t} \frac{M}{1-\beta}+2\bar L \frac{ 1- \beta^{s-t}}{1-\beta}\left(\frac{\bar L K_1}{\rho(1-\beta)}+\bar K\right)^{t} \sup_{k  \geq 0} \|\mu-\mu_k\|_{\mathrm{TV}}.
\end{align*}
{
It remains to show that $(\pi_t \otimes \mu_t)_t$ converges in $\mathcal P(X\times \mathcal P(A))$. Since
\(
\lim_{t \to \infty} \|Q_t - Q\|_{\infty} = 0,
\)
and since both $Q_t$ and $Q$ are strongly convex with respect to the $1$-norm, \cite[Lemma 4]{aydin2023robustness} applies to $(Q_t)_t$ and $Q$. Thus, we have
\(
\lim_{t \to \infty} \pi_t \otimes \mu_t = \pi \otimes \mu
\)
weakly, as desired.
}
\end{proof}

Our final error bound combines the results of Theorem \ref{thrm:7} and Theorem \ref{thrm:s2} to obtain a finite-time error bound between finite-horizon MFE and infinite-horizon MFE.

\begin{corollary}
    Suppose that the assumptions of Theorem \ref{thrm:7} and Theorem \ref{thrm:s2} hold. With the same notation as above, for any sufficiently large $T$ we have
    \[
    \| \mu - \mu^T_t \|_{\mathrm{TV}} \leq O \left( \left( \frac{\bar L K_1}{\rho(1-\beta)}+\bar K\right)^{t}+\frac 1{(T-t)}\left(\sqrt{ \frac{\hat K}{ \rho(A_T) \beta}}\right)^t (T+1)\beta^{T\varepsilon}\right)
    \]
    and
    \[
    \| Q-Q^T_t\|_{\infty} \leq O \left( \frac {s-t}{(T+1-t)}\left(\sqrt{ \frac{\hat K}{ \rho(A_T) \beta}}\right)^s (2T+1)\beta^{(T+1)\varepsilon} + \beta^{s-t} + \left(\frac{\bar L K_1}{\rho(1-\beta)}+\bar K\right)^{t}\right)
    \]
    where $T>s>t$.
\end{corollary}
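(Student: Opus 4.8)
The plan is to interpolate through the infinite-horizon non-stationary MFE, which under the combined hypotheses is unique. Indeed, the assumptions of Theorem \ref{thrm:7} are among those assumed here, so MFG$_{\mathrm{ns}}$ has a \emph{unique} MFE $(\pmb\pi,\pmb\mu)=((\pi_t)_t,(\mu_t)_t)$ with associated state-action functions $(Q_t)_t$, and this is precisely the flow whose tail is controlled in Theorem \ref{thrm:s2}. First I would fix this flow together with the stationary MFE pair $(\pi,\mu)$ and its state-action function $Q$ from Theorem \ref{thrm:s2}, and for each $t<s<T$ split via the triangle inequality:
\[
\|\mu-\mu^T_t\|_{\mathrm{TV}}\leq \|\mu-\mu_t\|_{\mathrm{TV}}+\|\mu_t-\mu^T_t\|_{\mathrm{TV}},\qquad \|Q-Q^T_t\|_{\infty}\leq \|Q-Q_t\|_{\infty}+\|Q_t-Q^T_t\|_{\infty}.
\]

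Next I would bound each summand by the relevant already-proved estimate. For the ``stationary vs.\ non-stationary'' terms, Theorem \ref{thrm:s2} gives, via \eqref{eq:ssss1}, $\|\mu-\mu_t\|_{\mathrm{TV}}=O\big((\tfrac{\bar LK_1}{\rho(1-\beta)}+\bar K)^t\big)$, and its final display gives $\|Q-Q_t\|_{\infty}=O\big(\beta^{s-t}+(\tfrac{\bar LK_1}{\rho(1-\beta)}+\bar K)^t\big)$, since the factor $\bar L\frac{1-\beta^{s-t}}{1-\beta}$ is bounded and $\sup_{k\geq 0}\|\mu-\mu_k\|_{\mathrm{TV}}\leq 2$. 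For the ``finite-horizon vs.\ non-stationary'' terms, Theorem \ref{thrm:7} gives $\|\mu_t-\mu^T_t\|_{\mathrm{TV}}=O\big(\tfrac1t(\sqrt{\hat K/(\rho(A_T)\beta)})^t(2T+1)\beta^{T\epsilon}\big)$, and the corollary following it gives the matching bound for $\|Q_t-Q^T_t\|_{\infty}$. Since Assumption \ref{assump:4} forces $\sqrt{\hat K/(\rho(A_T)\beta)}>1$ for all sufficiently large $T$ and $s>t$, the maximum appearing in that corollary equals $(\sqrt{\hat K/(\rho(A_T)\beta)})^s$. Adding the two contributions in each line and keeping the dominating terms (absorbing $2T+1=O(T)$) yields exactly the two claimed bounds.

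The only point requiring care — the mild ``obstacle'' here — is uniformity of the implicit $O$-constants: the statement is a finite-time bound, so the threshold on $T$ and the constants must be chosen independently of $t$ (and $s$). This is inherited from the remark following Theorem \ref{thrm:7} (the threshold $T$ is universal in $t$ by \eqref{eq:eq1}) and from the fact that the Theorem \ref{thrm:s2} constants involve only $\sup_{k\geq0}\|\mu-\mu_k\|_{\mathrm{TV}}$ and not $t$; one also checks that the requirement $T>s>t$ is consistent with every invoked result. No genuinely new estimate is needed — the corollary is a bookkeeping combination of Theorems \ref{thrm:s2} and \ref{thrm:7}.
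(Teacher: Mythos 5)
Your proposal is correct and matches the paper's own argument, which is precisely a triangle inequality through the (unique) infinite-horizon non-stationary MFE combined with the bounds of Theorem \ref{thrm:s2}, Theorem \ref{thrm:7}, and the corollary following it. The extra remarks on uniformity of the constants and absorbing $2T+1=O(T)$ are consistent with the paper's statement and introduce nothing new.
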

\begin{proof}
    This result follows from Theorem \ref{thrm:7} and Theorem \ref{thrm:s2} after a straightforward triangle inequality.
\end{proof}

\section{Approximation of Infinite-Horizon Mean-field Equilibria with Finite-Horizon Equilibria}\label{sect:approx}

Let \( \mathrm{MFG}_{\mathrm T} = (X, A, c, p, \mu_0, T)\), \( \mathrm{MFG}_{\textrm{ns}} = (X, A, c, p, \mu_0)\), and \( \mathrm{MFG}_s = (X, A, c, p)\). Throughout this section, we assume that $(X,d_X)$ is a compact metric space, and $A$ is a compact convex subset of some Euclidean space $\mathbb R^m.$ In this section, we establish general convergence results regarding the approximation of infinite-horizon non-stationary and stationary MFE via finite-horizon MFE. In the previous section, in the presence of a regularizer, we obtained the rate of convergence between finite-horizon MFE and infinite-horizon MFE under the contraction of the finite-horizon MFE and Assumption \ref{assump:4}. Similarly, we have established the convergence of the non-stationary infinite-horizon MFE to stationary MFE under contraction of these MFG. In this section, we will study the convergence between these different notions of MFE under weaker assumptions, but without any error bound available. The questions that we will tackle in this section can be summarized as follows:
\begin{enumerate}
    \item In Subsection \ref{sect:4.1}, we show that accumulation points (in the time-horizon) of ``extensions'' of finite-horizon MFE are infinite-horizon non-stationary MFE.
    \item In Subsection \ref{sect:4.2}, we provide a characterization of the convergence of infinite-horizon non-stationary MFE to a stationary MFE.
    \item In Subsection \ref{sect:4.3}, as a byproduct of the results above, we will show that finite-horizon MFE can be used to approximate stationary MFE when the fixed-point iteration holds for MFE$_{\text s}$. In particular, we will show that when the fixed-point iteration holds for MFG$_{\text s},$ by learning the MFE of finite-horizon games, one can also learn an approximate MFE for MFE$_{\text s}$.
\end{enumerate}

\subsection{Approximation of Stationary Equilibria with Discounted Finite-Horizon Equilibria}\label{sect:4.1}
In this subsection, we study the relationship between the finite-horizon MFE and the infinite-horizon non-stationary MFE under the same initial-state measure.

First, we will give a description of the mode of the convergence. Let $\pi:X \to \mathcal P(A)$ be a policy and $\mu \in \mathcal P(X)$ be a state-measure. By $\pi\otimes \mu$ we denote the joint probability measure $\pi(da|x)\mu(dx)$. With this notation, we will show that for any given \((\pi^T_t,\mu^T_t)_{t=0}^{T-1}\in\mathrm{MFE}_T\), after extending them to $\mathcal P(X\times A)^{\infty}:=\prod_{t=0}^{\infty}\mathcal P(X\times A)$, accumulation points of the family $\{(\pi^T_t\otimes \mu^T_t)_t:T\in \mathbb N\}$ in $\mathcal P(X\times A)^{\infty}$, where each section $\mathcal P(X\times A)$ is endowed with the topology of the weak convergence, induces a MFE of \(\mathrm{MFG}_{\textrm{ns}}\) after disintegration. Using the definition of a MFE, if $\lim_T \pi^T_t\otimes \mu^T_t = \pi_t\otimes \mu_t$, this amounts to showing that $\pi_t$ is an optimal policy given $\pmb \mu = (\mu_t)_{t \in \mathbb N}$ at time $t$ for all $t \in \mathbb N$ and 
\[
\mu_{t+1}(\cdot) = \int_X\int_A p(\cdot|x,a,\mu_t)\pi_t(da|x)\mu_t(dx).
\]

To achieve this, we will first extend the MFE of \(\mathrm{MFG}_T\) to infinite flows, since the MFE of \(\mathrm{MFG}_{\textrm{ns}}\) are defined as infinite flows. Then, we will extract convergent subsequences of $Q$-functions and state-measures from these extended finite-horizon MFE to pass to the infinite-horizon case. We will need the convergence of $Q$-functions to ensure that accumulation points of each $(\pi^T_t \otimes \mu^T_t)_T$ concentrates on the optimal state-action pairs of the limiting $Q$-function, so that the limiting stochastic kernel $\pi_t$ obtained from disintegration is indeed an optimal policy. Here, the optimal state-action pairs at time $t$ refer to pairs of states and actions, $(x,a),$ at time $t$ such that $a$ is a minimizer of the corresponding $Q$-function at time $t$ under the state $x$.

To preserve the dynamic programming structure between the convergent $Q$-functions required for a MFE and the evolution structure required from state-measures (see \eqref{eq:stat-evolve}), we will have to interchange limits and integrals, for which we need sufficient regularity conditions on the transition probability $p$. Furthermore, we will also have to control the behavior of the limiting optimal policies, for which we need a regularity conditions on the one-stage cost function $c$. For this purpose, throughout this subsection, we impose the following assumption on our system components:
\begin{assumption}\label{assump:2}
Equip $X \times A \times \mathcal P(X)$ is endowed with the product topology. For any convergent sequence $(x_n,a_n,\mu_n) \subset X \times A \times \mathcal P(X)$ such that 
\[
\lim_{n \to \infty} (x_n,a_n,\mu_n) = (x,a,\mu) \in X \times A \times \mathcal P(X)
\]
the following holds:
\begin{enumerate}
    \item  The one stage cost function $c$ satisfies $\lim_n c(x_n,a_n,\mu_n)=c(x,a,\mu).$
    \item The transition probability $p$ satisfies $\lim_n p(\cdot |x_n,a_n,\mu_n) = p(\cdot|x,a,\mu)$ weakly.
\end{enumerate}
\end{assumption}

We refer to the assumptions above as \emph{continuous convergence} of $c$ (resp. \emph{weakly continuous convergence} of $p$), which can be shown easily to coincide with the uniform convergence on compact sets on their respective spaces using the separability. Since we assumed that our state space $X$ is compact, we will work with the $1$-Wasserstein metric over $\mathcal P(X)$, which metrizes the weak convergence. We recall that the $1$-Wasserstein metric is defined as 
\[
W_1( \mu, \nu ) := \sup_{ \|g \|_{\textrm{Lip}} \leq 1} \left | \int_X g(x)\mu(dx) - \int_X g(x) \nu(dx) \right|,
\]
where $\|g \|_{\textrm{Lip}}$ denotes the Lipschitz coefficient of the function $g: X \to \mathbb R$. It follows that $(\mathcal P(X),W_1)$ is a compact space. Thus, we can extract convergent subsequence of state-measures in our setting as it is.

To ensure that from a given family of $Q$-functions obtained under a MFE we can extract a convergent subsequence, we will impose the following Lipschitz conditions on the system components to ensure that they all lie on a common compact space.
\begin{assumption}\label{assump:3}
    \begin{enumerate}[(a)]
    \item The one-stage reward function \( c \) satisfies the following Lipschitz bound:
    \[
    \sup_{\substack{x,\hat{x}\in X\\ x\neq \hat{x}}}\sup_{(a,\mu) \in A \times \mathcal P(X)}\frac{\left| c(x,a,\mu) - c(\hat{x},a,\mu) \right|}{d_X(x,\hat x)}
    \leq \tilde L_1 .
    \]

    \item The stochastic kernel \( p(\cdot | x,a,\mu) \) satisfies the following Lipschitz bound:
    \[
    \sup_{\substack{x,\hat{x}\in X\\ x\neq \hat{x}}} \sup_{(a,\mu) \in A \times \mathcal P(X)}\frac{W_1(p(\cdot | x,a,\mu), p(\cdot | \hat{x},a,\mu))}{d_X(x,\hat x)}
    \leq \tilde K_1.
    \]

    \item We have $\beta \tilde K_1 <1$.
\end{enumerate}
\end{assumption}
{
\begin{remark}
Compared with the regularized case, in which Lipschitz continuity properties are also imposed on the system in Theorem~\ref{prop:2}, the condition $\beta \tilde K_1 < 1$ is less restrictive. Moreover, we do not require the additional strong uniqueness condition on the cost function.
\end{remark}
}
The condition $\beta \tilde K_1 < 1$ is imposed to ensure the existence of a compact domain for the $Q$-functions that is invariant under $H_{1,t}(\cdot,\mu)$ for every $\mu \in \mathcal P(X)$. The condition $\beta \tilde K_1 < 1$ will play a similar role to the implicit condition $\beta \frac {K_1}2<1$ that we have used in the previous section; see Remark \ref{rem:discount}. This is sufficient to guarantee that for any $Q$-function arising from finite-horizon and infinite-horizon MFE, say $Q$, the map
\(
x \mapsto \min_{a \in A} Q(x,a)
\)
is Lipschitz on $X$ with Lipschitz constant that is uniform across all the possible $Q$-functions. As a consequence, the collection of such $Q$-functions is contained in a compact set. {To prove this, we will show the sequential compactness of the $Q$-functions obtained under a given family of state-measures, which is equivalent to compactness in the case of compact metric spaces. To argue this, for a sequence of $Q$-functions $(Q^n_t)_{t=0}^T$, we will use Assumption \ref{assump:3} to extract convergent subsequences of $(\min_{a \in A}Q^n_t(\cdot,a))_{t=1}^T$ and use the compactness of $\mathcal P(X)$ to extract a convergent subsequence of state-measures.
}

In contrast with the infinite-horizon case, convergence of the $Q$-functions is not automatic in the finite-horizon setting: even if the state measures of finite-horizon MFEs converge to those of an infinite-horizon MFE, the corresponding $Q$-functions need not converge. The source of this difficulty is the finite-time termination of the finite-horizon $Q$-functions in the finite-horizon case. To ensure convergence, we therefore impose the additional requirement that the $Q$-functions belong to a compact set.

In general, the condition $\beta \hat K_1<1$ is not restrictive. For instance, when $X$ is a finite discrete space equipped {with the discrete metric}, one can always take $\tilde K_1\le 1$, as $W_1(\mu,\nu) \le1$ for all $\mu,\nu \in \mathcal P(X)$ in this case. Indeed, for any coupling $\mu \otimes \nu$ of $\mu$ and $\nu$,
\[
\int_{X \times X} 1_{\{x \not = \tilde x\}} \mu\otimes \nu(dx,d\tilde x) \le 1,
\]
which implies $W_1(\mu,\nu)\le 1$.

\begin{remark}
    If $\beta \tilde K_1<1$ is not satisfied, one can impose a stronger regularity condition
    \[
    \| p(\cdot|x,a,\mu)-p(\cdot|\hat x,\hat a,\mu)\|_{\mathrm{TV}} \le \tilde K_1\left( d_X(x,\hat x) + \| a - \tilde a \| \right),
    \]
    on $p$ to ensure that $Q$-functions will lie in a compact set. Since the bounded functions work as test functions under the total-variation norm, this will be enough to ensure that $\min_{a \in A}Q(\cdot,a)$ are Lipschitz continuous with a uniform Lipschitz bound across all the feasible $Q$-functions.
\end{remark}

Since $(\mathcal P(X),W_1)$ is a compact metric space, Assumptions \ref{assump:2} and \ref{assump:3} imply that the cost function $c$ is bounded above by some constant $M$. Throughout the rest of this section, we assume that Assumptions \ref{assump:2} and \ref{assump:3} hold.

Let \(\{(\pi_t^T, \mu_t^T)\}_{t=1}^T \in \mathrm{MFE}_{\mathrm T}\). We extend this MFE to an infinite flow, \(\{(\tilde{\pi}_t^T, \tilde{\mu}_t^T)\}_{t=1}^\infty\), by defining
\[
\tilde{\pi}_t^T = 
\begin{cases}
\pi_t^T, & \text{for } t \leq T, \\
\pi_T^T, & \text{for } t > T,
\end{cases}
\quad
\tilde{\mu}_t^T = 
\begin{cases}
\mu_t^T, & \text{for } t \leq T, \\
\mu_T^T, & \text{for } t > T.
\end{cases}
\]
We will call the pairs $(\tilde \pi^T_t,\tilde \mu^T_t)_t$ as extended MFE, and denote the set of extended MFE as $\mathrm{EMFE}_T$.
We remark that our results in this subsection will be asymptotic; hence, the exact extension of \(\{(\pi_t^T, \mu_t^T)\}_{t=1}^T\) does not affect our conclusions.

The following result is the main result of this subsection and shows that any accumulation point of the joint probability measures generated by the flow $(\tilde \pi^T_t ,\tilde \mu^T_t)_{t=1}^{\infty}$ in the horizon $T$ indeed gives us a MFE for the non-stationary game after disintegration.

\begin{theorem}\label{thrm:1}
Suppose Assumptions \ref{assump:2} and \ref{assump:3} hold. Any accumulation point of the family 
\[
\{(\tilde \pi^T_t \otimes \tilde \mu^T_t)_t: (\tilde \pi^T_t \otimes \tilde \mu^T_t)_t \in\mathrm{EMFE}_{\mathrm T},   T\geq 1\} \subset \mathcal P(X\times A)^{\infty}
\]
in $\mathcal P(X \times A)^{\infty}$ under the product topology generated by the weak convergence of probability measures leads to a non-stationary MFE for $\mathrm {MFG}_{\mathrm{ns}}$ after disintegration.
\end{theorem}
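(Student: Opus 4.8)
The plan is to combine three ingredients: a soft compactness argument to produce the accumulation point, a passage to the limit in the mean-field fixed-point equation to get the population-flow consistency, and Lemma \ref{lem:1} to transfer optimality from the finite-horizon $Q$-functions to the limiting one. Since $X$ and $A$ are compact, $\mathcal P(X\times A)$ is compact and metrizable in the weak topology, hence so is $\mathcal P(X\times A)^{\infty}$ under the product topology, and accumulation points are realized along subsequences. So I would fix a subsequence $(T_k)_k$ with $T_k\to\infty$ along which $\tilde\pi^{T_k}_t\otimes\tilde\mu^{T_k}_t \to \nu_t$ weakly for every $t$, and disintegrate $\nu_t = \hat\pi_t\otimes\hat\mu_t$ with $\hat\mu_t$ the $X$-marginal and $\hat\pi_t:X\to\mathcal P(A)$ a Borel kernel (disintegration is available on Polish spaces). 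Since for $t\le T_k$ the extension coincides with the original equilibrium, $\mu^{T_k}_t\to\hat\mu_t$ weakly, and $\hat\mu_0=\mu_0$; the claim is that $(\hat\pi_t,\hat\mu_t)_{t\ge0}$ is a non-stationary MFE.

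For the flow consistency: for each fixed $t$ and every $k$ with $T_k\ge t+1$, the finite-horizon equilibrium equation gives $\mu^{T_k}_{t+1}(\cdot)=\int_{X\times A}p(\cdot|x,a,\mu^{T_k}_t)\,\pi^{T_k}_t\otimes\mu^{T_k}_t(dx,da)$. For $g\in C_b(X)$ put $f_k(x,a):=\int_X g(y)p(dy|x,a,\mu^{T_k}_t)$. Assumption \ref{assump:2}(2) (using that $X$ is compact Polish, so $\mathcal P(X)$ is weakly metrizable) yields $p(\cdot|x_k,a_k,\mu^{T_k}_t)\to p(\cdot|x,a,\hat\mu_t)$ weakly whenever $(x_k,a_k)\to(x,a)$, so the $f_k$ converge continuously to $f(x,a):=\int_X g(y)p(dy|x,a,\hat\mu_t)$ and are uniformly bounded by $\|g\|_\infty$. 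Combining continuous convergence of $(f_k)_k$ with the weak convergence $\pi^{T_k}_t\otimes\mu^{T_k}_t\to\hat\pi_t\otimes\hat\mu_t$ via the generalized dominated convergence theorem \cite[Theorem 3.5]{langen1981convergence} (exactly as in the proof of Lemma \ref{lem:1}), the right-hand side tends to $\int_{X\times A}f\,d(\hat\pi_t\otimes\hat\mu_t)$, while the left-hand side tends to $\int_X g\,d\hat\mu_{t+1}$ because $\mu^{T_k}_{t+1}\to\hat\mu_{t+1}$ weakly. As $g$ is arbitrary, $\hat\mu_{t+1}(\cdot)=\int_X p(\cdot|x,a,\hat\mu_t)\hat\pi_t(da|x)\hat\mu_t(dx)$ for all $t$, which is the population-flow relation of $\mathrm{MFG}_{\mathrm{ns}}$.

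For optimality: let $Q_t$ be the infinite-horizon discounted $Q$-functions for the flow $(\hat\mu_t)_t$ (continuous, lying in the compact set $\mathcal C$ by Lemma \ref{lem:f6}) and $Q^{T_k}_t$ the finite-horizon $Q$-functions of $\mathrm{MFG}_{T_k}$ for $(\mu^{T_k}_t)_t$, which also lie in $\mathcal C$. Passing to a further subsequence and a diagonal extraction over $t$, assume $Q^{T_k}_t\to\bar Q_t$ uniformly, hence continuously, in $C(X\times A)$ for every $t$. For fixed $t$ and $k$ large enough that $T_k>t$, the Bellman recursion reads $Q^{T_k}_t(x,a)=c(x,a,\mu^{T_k}_t)+\beta\int_X\min_{b\in A}Q^{T_k}_{t+1}(y,b)\,p(dy|x,a,\mu^{T_k}_t)$; letting $k\to\infty$ and using $c(x,a,\mu^{T_k}_t)\to c(x,a,\hat\mu_t)$ (Assumption \ref{assump:2}(1)), the uniform convergence $\min_b Q^{T_k}_{t+1}(\cdot,b)\to\min_b\bar Q_{t+1}(\cdot,b)$, and $p(\cdot|x,a,\mu^{T_k}_t)\to p(\cdot|x,a,\hat\mu_t)$ weakly, we get that $(\bar Q_t)_t$ solves the infinite-horizon non-stationary Bellman system for $(\hat\mu_t)_t$. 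Since that system has a unique bounded solution (the induced operator on bounded families of functions is a $\beta$-contraction in the sup norm), $\bar Q_t=Q_t$ for every $t$. Now $\pi^{T_k}_t\otimes\mu^{T_k}_t$ concentrates on the minimizers of $Q^{T_k}_t$ (optimality of the MFE policy in the finite-horizon MDP), $Q^{T_k}_t\to Q_t$ continuously, and $\pi^{T_k}_t\otimes\mu^{T_k}_t\to\hat\pi_t\otimes\hat\mu_t$ weakly, so Lemma \ref{lem:1} gives that $\hat\pi_t\otimes\hat\mu_t$ concentrates on $\{(x,a):Q_t(x,a)=\min_b Q_t(x,b)\}$. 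By the verification theorem for discounted MDPs (as in \cite{SaBaRaSIAM,anahtarci2020value}), together with $\hat\mu_t$ being the law of $x_t$ under $(\hat\pi,\mu_0)$ from the previous step, this yields $J(\hat\pi)=\inf_{\tilde\pi\in\Pi}E^{\tilde\pi}[\sum_{t\ge0}\beta^t c(x_t,a_t,\hat\mu_t)]$, so the disintegrated accumulation point is a non-stationary MFE of $\mathrm{MFG}_{\mathrm{ns}}$.

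The main obstacle is the optimality step, specifically showing that the finite-horizon $Q$-functions converge continuously to the infinite-horizon $Q$-function of the limiting flow. Lemma \ref{lem:f6} supplies the equicompactness needed to extract a uniformly convergent subsequence, but identifying the limit requires passing the Bellman recursion to the limit simultaneously in the cost (continuous convergence of $c$), in the $\min$ of the next-stage $Q$-function (uniform convergence on the compact $X\times A$), and in the transition kernel (weakly continuous convergence of $p$), and then invoking uniqueness of bounded solutions of the non-stationary Bellman system to pin the limit down; the bookkeeping that for each fixed $t$ only the tail $T_k>t$ of the horizon sequence is relevant, combined with the diagonal subsequence, is where care is needed.
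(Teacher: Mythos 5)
Your proposal is correct and follows essentially the same route as the paper: compactness of $\mathcal P(X\times A)^{\infty}$ and of the set $\mathcal C$ of $Q$-functions (Lemma \ref{lem:f6}) to extract a convergent subsequence, passage to the limit in the Bellman recursion and the flow-consistency equation via a generalized dominated convergence theorem for weakly convergent measures, and Lemma \ref{lem:1} to transfer concentration on optimal state-action pairs. The only cosmetic difference is the final verification: you identify the limit $Q$-functions with the infinite-horizon ones by uniqueness of bounded solutions of the non-stationary Bellman system and then invoke a verification theorem, whereas the paper works directly with the limit $Q$-functions and concludes by citing \cite[Theorem 3.6]{SaBaRaSIAM}.
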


\begin{remark}
When $\mathrm{MFG}_{\mathrm{ns}}$ admits a unique MFE, Theorem \ref{thrm:1} implies that any family of finite-horizon MFEs converges to the infinite-horizon non-stationary MFE. Moreover, as observed in Remark \ref{rem:1} and Theorem \ref{thrm:s2}, there are situations in which the convergence of the iterations for $\mathrm{MFG}_T$ can be justified for every $T$, even though the contraction condition fails for $\mathrm{MFG}_{\mathrm{ns}}$. In addition, fixed-point iteration is generally intractable for $\mathrm{MFG}_{\mathrm{ns}}$ because of the infinite-horizon nature of the problem. Therefore, when $\mathrm{MFE}_{\mathrm{ns}}$ is a singleton, Theorem \ref{thrm:1} shows that finite-horizon MFEs provide a tractable approximation scheme for $\mathrm{MFE}_{\mathrm{ns}}$ under weaker learning conditions than those required for $\mathrm{MFG}_{\mathrm{ns}}$. We also note that, in the setting of Section \ref{sect:fixpt}, fixed-point iteration for finite-horizon MFGs may converge even when $\bar K>1$, whereas this is not possible for infinite-horizon MFGs, in either the non-stationary or the stationary setting.
\end{remark}

Before proceeding to the proof of Theorem \ref{thrm:1}, we will show that $Q$-functions obtained under MFE (both in finite and infinite-horizon) lie in a common compact metric space and establish a technical lemma to ensure that limiting joint probability measures obtained from finite-horizon MFE gives an optimal policy after disintegration.

The main reason why we have assumed Lipschitz continuity of the system components in Assumption \ref{assump:3} is the following lemma, which shows that all the $Q$-functions we obtain via dynamic programming (in both finite and infinite horizon settings) belong to a common compact subset of $(C(X\times A),\| \cdot \|_{\infty})$, the space of all continuous functions over $X \times A$ equipped with the uniform norm, which will be essential for our subsequent arguments. The constants in the assumption above will not be important whatsoever regarding our results, unlike in the case of fixed-point iterations, where we required our Lipschitz coefficients to be sufficiently small.

\begin{lemma}\label{lem:f6}
    Suppose Assumptions \ref{assump:2} and \ref{assump:3} hold. For all $T \in \mathbb N$, let $(\mu^T_t)_t \in \{\mu_{0,T}\}\times \mathcal P(X)^{T-1}$ be a family of state-measures that induce finite-horizon MFE starting from $\mu_{0,T}$. Let $(\tilde \mu^T_t)_t$ be the corresponding EMFE. By $(Q^T_t)_t$ denote the corresponding family of $Q$-functions to $(\mu^T_t)_t$. {Then, for each $t \in \mathbb N$, the sequences $(Q^T_t)_{T \ge t}$ lie in a (common) separable compact space $(\mathcal C,\|\cdot\|_{\infty})$.}
    
    In particular, if $(\tilde \mu^T_t)_t$ is weakly convergent in $\mathcal P(X)^{\infty}$ to some $(\hat \mu_t)_t \in \{\hat \mu_0\}\times \mathcal P(X)^{\infty}$, then if $(\hat Q_t)_t$ is the family of $Q$-functions that correspond to $(\hat \mu_t)_t$, we have that
    \(
    \lim_{T \to \infty}\| Q^T_t-\hat Q_t\|_{\infty} =0.
    \)
\end{lemma}
\begin{proof}
Due to the length of the proof, we postpone the proof to Appendix \ref{sect:lem:f6}.
\end{proof}

\begin{notation}
    Unlike the previous subsection, the $Q$-functions obtained under a finite-horizon MFE might not belong to a common compact space with the $Q$-functions of the infinite-horizon non-stationary MFE. Thus, we are deducing the convergence properties of these $Q$-functions directly by using the convergence properties of the corresponding state-measures.
\end{notation}

Next, we will provide a technical statement that ensures that limits of our extended MFE gives a non-stastionary MFE after disintegration.
Under a convergent sequence of joint-probability measures, since the flow of state-measures can be obtained as a marginal of these joint-probabilities, it automatically holds that state-flows also converge weakly to that of the target joint probability measure. However, it is well known that convergence of the policies might not hold in general. When MFGs are regularized, as in Section \ref{sect:fixpt}, due to $(\hat \pi_t)_t$ being a flow of Dirac measures, we can further show that we can also establish the convergence of the policies $(\tilde \pi^T_t)_t$ to $(\hat \pi_t)_t$; see \cite[Lemma 4]{aydin2023robustness}.

Although convergence of the joint probability measures implies the convergence of their marginals, such convergence provides no information regarding the support of the limit, which we will need to verify that limiting flows obtained from the joint probability measures generated by finite-horizon MFE are indeed MFE. For this reason, we will need the following technical lemma, which is inspired from \cite[Proposition 3.9]{SaBaRaSIAM}.

\begin{lemma}\label{lem:1}
Suppose Assumptions \ref{assump:2} and \ref{assump:3} hold. Let $(Q_n)_n$ be a family of uniformly bounded continuous real-valued functions over $X\times A$ such that $\lim_{n \to \infty} Q_n(x_n,a_n) = Q(x,a)$ for all $(x_n,a_n)_n,(x,a) \in X \times A$ such that $\lim_n (x_n,a_n) = (x,a)$. Then, if $\pi_n \otimes \mu_n \in \mathcal P(X \times A)$ concentrates on the optimal state-action pairs of the continuous function $Q_n : X \times A \to \mathbb R$ for all $n$, then the weak limit of $(\pi_n \otimes \mu_n)_n$ in $\mathcal P(X\times A)$ also concentrates on the optimal state-action pairs of the function $Q$, provided the limit  of $(\pi_n \otimes \mu_n)_n$ exists.
\end{lemma}
\begin{proof}
    Due to the length of the proof, we postpone the proof to Appendix \ref{sect:lem:1}.
\end{proof}

Lastly, to show that limits of state-measures satisfy the desired evolution condition, we will need the following extension of the dominated convergence theorem.
\begin{lemma}\label{lem:2}
    Suppose Assumptions \ref{assump:2} and \ref{assump:3} hold. Let $(\pi_t)_t$ be a family of policies such that $\pi_t: X \to \mathcal P(A)$. Let $(\mu_t)_t \subset \mathcal P(X)$. Suppose that $\lim_{t \to \infty} \pi_t \otimes \mu_t$ is weakly convergent in $\mathcal P(X \times A)$ (in the sense of probability measures). Then, we have
    \[
    \lim_{t \to \infty} \int_{X \times A} p(\cdot|x,a,\mu_t)\pi_t \otimes \mu_t(da,dx) = \int_{X \times A} p(\cdot|x,a,\lim_{t \to \infty}\mu_t) \lim_{t\to \infty} \pi_t \otimes \mu_t(da,dx),
    \]
    where the limits are taken under the topology of weak convergence in $\mathcal P(X)$.
\end{lemma}
\begin{proof}
    We postpone the proof to Appendix \ref{sect:lem:2}.
\end{proof}

With the tools introduced above, below we provide the promised proof of Theorem \ref{thrm:1}.

\begin{proof}[Proof of Theorem \ref{thrm:1}]
Let $(\tilde \pi^T_t,\tilde \mu^T_t)_{t=0}^{\infty}$ denote the extended MFE obtained from the $T$-horizon MFG. Let $(Q^T_t)_{t=0}^T$ denote the family of action-value functions that satisfy
\[
Q^T_t(x,a) = E^{\tilde \pi^T_t}\left[ \sum_{n=t}^T \beta^{n-t
}c(x_n,a_n,\tilde \mu^T_n)\bigg|x_t=x, a_t=a \right]
\]
for all $t\leq T.$ We extend them to the infinite-horizon as
\[
\tilde Q^T_t(x,a) = Q^T_t(x,a) 1_{\{t \leq T\}}(t) + Q^T_{T}(x,a)1_{\{t >T\}}(t).
\]
The family $\{(\tilde \pi^T_t\otimes \tilde \mu^T_t)_t:T \geq 1\}$ lies in the compact set $\mathcal P(X\times A)^{\infty}$ as $X$ and $A$ are compact. Since for any $t \in \mathbb N$, $(\tilde Q^T_t)_T$ also lies in a compact set by Lemma \ref{lem:f6}, as a consequence of the Arzel\`a-Ascoli theorem, using a diagonal extraction, for any subsequence of $\{(\tilde \pi^T_t \otimes \tilde \mu^T_t)_t:T \geq 1\} \times \{ (Q^T_t)_t : T\geq 1 \}$ we can find a convergent subsequence in $\mathcal P(X\times A)^\infty \times \prod_{t=0}^{\infty} \mathcal C$, which we will write using the horizon indices $(T_n)_n$. {Here, the set $\mathcal C$ is as defined in Lemma \ref{lem:f6}}. Let $\lim_{n \to\infty} ((\tilde \pi^{T_n}_t\otimes \tilde \mu^{T_n}_t)_t,(\tilde Q^{T_n}_t)_t)=:( (\hat \pi_t \otimes \hat \mu_t)_t, (Q_t)_t).$ 

Recall that for any $t+1< T_n$ we have
\[
\tilde Q^{T_n}_t(x,a) = c(x,a,\tilde \mu^{T_n}_t) + \beta \int_X \min_{b \in A} \tilde Q^{T_n}_{t+1}(y,b)p(dy|x,a,\tilde \mu^{T_n}_t),
\]
and
\[
\tilde \mu^{T_n}_{t+1}(\cdot) = \int_X \int_A p(\cdot | x,a,\tilde \mu^{T_n}_t)\tilde \pi^{T_n}_t(da | x)\tilde \mu^{T_n}_t(dx).
\] 
Since $(\tilde Q^{T_n}_t)_n$ converges uniformly to $Q_t$, using the continuous weak convergence of $p$ (Assumption \ref{assump:3}), we satisfy the running conditions of \cite[Theorem 3.5]{serfozo1982convergence}, which implies that
\[
Q_t(x,a) = c(x,a,\hat \mu_t) + \beta \int_X \min_{b \in A} Q_{t+1}(y,b)  p(dy | x,a,\hat\mu_t).
\]
As a consequence of Lemma \ref{lem:2}, we also have the evolution 
\[
\hat \mu_{t+1}(\cdot) = \int_X \int_A p(\cdot | x,a,\hat \mu_t)\hat \pi_t(da | x)\hat \mu_t(dx)
\]
satisfied by the flow $(\hat \mu_t)_t$.

The joint probability measure $\tilde \pi^{T_n}_t \otimes \tilde \mu^{T_n}_t$ concentrates on the optimal state-action pairs of $\tilde Q^{T_n}_t$ for $t<T_n$ by definition, and thus by Lemma \ref{lem:1} it follows that $\pi_t \otimes \mu_t$ must concentrate on the optimal state-action pairs of $Q_t$ for all $t$.
Therefore, by \cite[Theorem 3.6]{SaBaRaSIAM} it follows that $(\pi_t \otimes \mu_t)_{t=0}^{\infty}$ must be an MFE of MFG$_{\textrm{ns}}$, as desired.
\end{proof}

\subsection{Approximation of $\text{ MFE}_{\text{s }}$ with $\text{ MFE}_{\text{ns}}$}\label{sect:4.2}
{
Theorem \ref{thrm:7} shows that the gap between the state measure of a finite-horizon MFE can be of the form $O(a^tb^T)$, where $a>1$ and $0<b<1$. Thus, we might not be able to approximate an infinite-horizon non-stationary MFE via finite-horizon MFE by solving a finite-horizon for a fixed-horizon length $T$. A tractable approximation scheme for infinite-horizon non-stationary MFE would be to construct an approximate infinite-horizon non-stationary MFE by solving a finite-horizon MFG of horizon length $T$.}

To identify situations in which such an approximation is possible, in this subsection we study the relationship between MFE$_{\mathrm s}$ and MFE$_{\mathrm{ns}}$. In particular, our goal is to establish conditions under which stationary MFE can be approximated by finite-horizon MFE uniformly over time in the general setting of this section. This will also allow us to obtain a tractable estimation scheme for infinite-horizon non-stationary MFE as explained above. These results also provide tractable approximation schemes for stationary MFE based on finite-horizon MFE.

The property that we will study to relate $\mathrm{MFE}_{\mathrm{ns}}$ and $\mathrm{MFE}_{\mathrm s}$ is the so-called \emph{asymptotically discount optimality} condition.
\begin{definition}
    Let $\mathrm {MFG}_{\mathrm{ns}}$ be a non-stationary infinite-horizon MFG $(X,A,c,p,\mu_0)$. We say that $(\hat \pi_t,\hat \mu_t)_{t=0}^{\infty}$ is an \emph{asymptotically discount optimal MFE} ($\mathrm{ADOMFE}$ for short) in $\mathrm {MFG}_{\mathrm{ns}}$ if $\lim_{t \to \infty} \hat \pi_t \otimes \hat \mu_t = \pi \otimes \mu$ exists in $(\mathcal P(X\times A),W_1)$ for some $\pi\otimes \mu \in \mathcal P(X \times A)$ that is an MFE for the stationary MFG $(X,A,c,p)$.
\end{definition}

\begin{remark}
    In Theorem \ref{thrm:s2}, it is shown that when the stationary MFG is contractive, infinite-horizon non-stationary MFE obtained under any initial state-measure is indeed ADOMFE.
\end{remark}

Let $(\hat \pi_t,\hat \mu_t)_t$ be an $\mathrm{ADOMFE}$. Then, by definition, the state marginal flow $(\hat \mu_t)_t$ converges weakly to some $\mu \in \mathcal P(X)$; that is, $
\hat \mu_t \Rightarrow \mu$ as $t \to \infty.$
At first glance, this suggests that if the family of joint probability measures
\(
\{(\tilde \pi_t^T \otimes \tilde \mu_t^T)_T : t \geq 0\}
\)
converges asymptotically to $(\hat \pi_t\otimes\hat \mu_t)_t$ as $T\to \infty$ in $\mathcal P(X\times A)^{\infty}$, then the corresponding state marginals should inherit the same asymptotic behavior. More precisely, if
\[
\lim_{T\to\infty} \tilde \pi_t^T \otimes \tilde \mu_t^T
=
\hat \pi_t \otimes \hat \mu_t
\qquad \text{for all } t,
\]
then, for sufficiently large $T$, the terminal marginal $\tilde \mu_T^T$ should be close, in the weak topology on $\mathcal P(X)$, to $\mu$ for all $t \geq T$, since the flow $(\hat \mu_t)_t$ accumulates around $\mu$.

However, if $(\hat \pi_t,\hat \mu_t)_t$ is not an $\mathrm{ADOMFE}$, the sequence $(\hat \mu_t)_t$ need not converge. Furthermore, the set of accumulation points of $(\hat \mu_t)_t$ in $(\mathcal P(X),W_1)$ may be sparse. Consequently, approximating {any} infinite-horizon MFE by a finite-horizon MFE in a tractable manner becomes significantly more subtle and may require additional arguments. We also emphasize that, in general, the limit
\(
\lim_{T\to\infty} \tilde \pi_t^T(\cdot | x)
\)
does not necessarily exist {even when $(\tilde \pi^T_t \otimes \tilde \mu^T_t)_t$ is convergent in $\mathcal P(X \times A)^{\infty}$ as $T \to \infty$.}

\begin{remark}
The concept of $\mathrm{ADOMFE}$ is inspired by the asymptotically discount optimal policies of MDPs, which are known to exist for any infinite-horizon discounted cost MDP and are widely used in the framework of adaptive learning \cite{hernandez2012adaptive} and reinforcement learning \cite{adelman2024thompson}.
\end{remark}

Unlike the previous subsection, in this subsection we will not need Assumption \ref{assump:3}. In the infinite-horizon case, the $Q$-functions that correspond to a flow of state-measures are automatically convergent if the flow of state-measures are convergent under Assumption \ref{assump:2}, which allows us to bypass the need for identifying a compact set for our $Q$-functions. First, define
\[
H_1(Q,\mu)(x,a) = c(x,a,\mu) + \beta \int_X \min_{b \in A} Q(y,b)p(dy|x,a,\mu).
\]

\begin{lemma}\label{lem:f7}
        Suppose Assumption \ref{assump:2} holds. For all $n\in \mathbb N$, let $\pmb \mu_n \in \{\mu_{0,n}\}\times\mathcal P(X)^{\infty},$ where $(\mu_{0,n})_n$ is weakly convergent in $\mathcal P(X)$. For all $n \in \mathbb N$ and $t \in \mathbb N$, by \(Q_{n,t}=H_{1}(Q_{n,t+1},\mu_{n,t})\)
    denote the corresponding family of $Q$-functions to $\pmb \mu_n$, which are uniquely determined by the flow $\pmb \mu_n$. Then, if $\pmb \mu_n$ is weakly convergent in $\mathcal P(X)^{\infty}$ to some $\pmb \mu=(\mu_t)_t$, we have that 
    \[
    \lim_{n \to \infty} \|Q_{n,t}-Q_t\|_{\infty} =0,
    \]
    where $(Q_t)_t$ is the sequence of $Q$-functions that correspond to the flow $\pmb \mu$, i.e.,
    \(
    Q_t = H_{1}(Q_{t+1},\mu_t).
    \)
\end{lemma}
\begin{proof}
    See Appendix \ref{sect:f7}.
\end{proof}

Let $(\hat \pi_t,\hat \mu_t)_t \in \text{MFE}_{\text {ns}}.$ The following proposition shows that when $\lim_t (\hat \pi_t \otimes \hat \mu_t)_t$ exists in $\mathcal P(X\times A)$ under the weak convergence, the limit must belong to MFE$_{\text s}$, which shows that $(\hat \pi_t,\hat \mu_t)_t$ is an ADOMFE.

\begin{theorem}\label{thrm:a}
    Suppose Assumption \ref{assump:2} holds. Let $(\hat \pi_t,\hat \mu_t)_t \in \mathrm{MFE}_{\mathrm {ns}}$. If $\lim_{t \to \infty} (\hat \pi_t \otimes \hat \mu_t)_t$ exists in $(\mathcal P(X\times A),W_1)$, then $\lim_{t \to \infty} (\hat \pi_t \otimes \hat \mu_t)_t \in \mathrm{MFE}_{\mathrm s}$ (after disintegration).
\end{theorem}
\begin{proof}
    Let $\pi\otimes \mu = \lim_t \hat \pi_t \otimes \hat \mu_t$. Recall that, for all $t \in \mathbb N$ we have
    \begin{equation}\label{eq:ns}
    \hat \mu_{t+1}(\cdot) = \int_X \int_A \,p(\cdot | x,a,\hat \mu_t)\,\hat \pi_t(da | x) \,\hat \mu_t(dx).
    \end{equation}
    Then by Lemma \ref{lem:2}, we obtain that 
    \begin{equation}\label{eq:ns1}
    \mu (\cdot) = \int_X \int_Ap(\cdot | x,a,\mu)\, \pi(da | x) \mu(dx).
    \end{equation}
    Let $Q_t$ be the family of $Q$-functions defined via the relation $H_1(Q_{t+1},\hat \mu_t) = Q_t$.
    
    Now consider the $k$-slided infinite-horizon non-stationary MFE generated by the flow $(\rho^k_t,\nu^k_t)_t$ for any $k \in \mathbb N$, where $\nu^k_t := \tilde \mu_{t+k}$ and $\rho^k_t := \tilde \pi_{t+k},$ $t \ge 0$. The corresponding $Q$-functions to the $k$-slided state-measures $(\nu^k_t)_t$ are indeed of the form $\tilde Q^k_t := Q_{t+k}$. It then follows that for any given $t\in \mathbb N$, $\lim_{k \to \infty} \rho^k_t \otimes \nu^k_t$ exists weakly in $\mathcal P(X\times A)$. Thus, Lemma \ref{lem:f7} is applicable to the slided MFE. Using Lemma \ref{lem:f7} on the family $\{(\rho^k_t,\nu^k_t)_t:k \in \mathbb N\}$, we obtain that there exists $\tilde Q_t := \lim_{k \to \infty} \tilde Q^k_t$, where the limit is under the uniform norm over $X \times A$, such that $\tilde Q_t = H_1(\tilde Q_{t+1},\mu).$
    Using the contractivity of the Bellman operator $H_1(\cdot,\mu)$ under the discount $\beta<1$, 
    we see that $\tilde Q_t = \tilde Q$ for all $t \in \mathbb N$ for some $Q$-function $\tilde Q$. Thus, $\lim_{k \to \infty} \tilde Q^k_t=\tilde Q$ for all $t \in \mathbb N$. Uniqueness of the fixed point of the Bellman operator indeed implies that $\tilde Q = Q$. By Lemma \ref{lem:1}, we obtain that $\pi\otimes \mu$ concentrates on the optimal state action pairs of the function $Q$, which in turn implies that $(\pi,\mu)$ is a stationary MFE, by definition.
\end{proof}

In case of MDPs, obtaining a stationary policy from finite-horizon optimal policies is often not difficult due to the continuous convergence of the $Q$-functions to a stationary one, which implies that accumulation points of the minimizers sampled from finite-horizon optimal policies are optimal for the stationary $Q$-function. However, in the case of MFGs, the system components also depend on the state-measures. The next theorem shows that the behavior of the state-measure flow is indeed crucial when relating infinite-horizon non-stationary MFE and stationary MFE.

It is clear from the definition of an $\textrm{ADOMFE}$ that the state-measure flow obtained from an $\textrm{ADOMFE}$ is also weakly convergent. The next proposition shows that the converse is also true when the optimal policies of MFE obtained from MFG$_s$ are Dirac measures; that is, an MFE is an $\textrm{ADOMFE}$ if, and only if, the state-measure flow obtained from the MFE converges weakly.

\begin{theorem}\label{thrm:q1}
Suppose that Assumption \ref{assump:2} holds. Let $(\hat \pi_t,\hat \mu_t)_t \in \mathrm {MFE}_{\mathrm {ns}}$ be such that $(\hat \mu_t)_t$ is weakly convergent in $\mathcal P(X)$. Suppose that for any $(\pi,\mu) \in \mathrm{MFE}_{\mathrm s},$ there exists a unique policy $\pi$ that corresponds to $\mu$. Then, the following are equivalent:
\begin{enumerate}
    \item $(\hat \pi_t,\hat \mu_t)_t$ is an $\mathrm{ADOMFE}$,
    \item $(\hat \mu_t)_t$ is weakly convergent.
\end{enumerate}
\end{theorem}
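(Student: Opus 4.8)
The plan is to handle $(1)\Rightarrow(2)$ in one line and to put all the work into $(2)\Rightarrow(1)$. For the easy direction, if $(\hat\pi_t,\hat\mu_t)_t$ is an $\mathrm{ADOMFE}$ then $\hat\pi_t\otimes\hat\mu_t\to\pi\otimes\mu$ weakly for some $(\pi,\mu)\in\mathrm{MFE}_{\mathrm s}$; since taking the $X$-marginal is weakly continuous, $\hat\mu_t\to\mu$ weakly, which is $(2)$.

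For $(2)\Rightarrow(1)$, suppose $\hat\mu_t\to\mu$ weakly and let $(Q_t)_t$ be the family of state-action functions attached to the flow $(\hat\mu_t)_t$ by the backward recursion, as in Theorem~\ref{thrm:s2}. The first step is to show that $Q_t\to Q$ uniformly, where $Q$ is the stationary state-action function associated with $\mu$. By Lemma~\ref{lem:f6} the family $(Q_t)_t$ lies in the compact set $\mathcal C\subset C(X\times A)$, so from any subsequence a diagonal argument extracts a further subsequence $(s_n)$ with $Q_{s_n+k}\to\tilde Q_k$ uniformly for every $k\ge 0$, $\tilde Q_k\in\mathcal C$. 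Because $\hat\mu_{s_n+k}\to\mu$ weakly for each fixed $k$, Assumption~\ref{assump:2} together with \cite[Theorem~3.5]{serfozo1982convergence} lets me pass to the limit in
\[
Q_{s_n+k}(x,a) = c(x,a,\hat\mu_{s_n+k}) + \beta\int_X \min_{b\in A} Q_{s_n+k+1}(y,b)\,p(dy|x,a,\hat\mu_{s_n+k})
\]
to get $\tilde Q_k(x,a)=c(x,a,\mu)+\beta\int_X\min_{b\in A}\tilde Q_{k+1}(y,b)\,p(dy|x,a,\mu)$. Since the affine operator $\Phi\colon Q\mapsto c(\cdot,\cdot,\mu)+\beta\int_X\min_{b\in A}Q(y,b)\,p(dy|\cdot,\cdot,\mu)$ is a $\beta$-contraction on $(C(X\times A),\|\cdot\|_\infty)$ with unique fixed point $Q$, iterating the identity $\tilde Q_k=\Phi^{N}\tilde Q_{k+N}$ and using boundedness of $\mathcal C$ yields $\|\tilde Q_k-Q\|_\infty\le\beta^{N}\sup_{f\in\mathcal C}\|f-Q\|_\infty\to 0$, hence $\tilde Q_k=Q$ for all $k$. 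As the limit is independent of the subsequence, $Q_t\to Q$ uniformly.

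The second step extracts the limit of the joint measures. Since $X\times A$ is compact, $(\hat\pi_t\otimes\hat\mu_t)_t$ is relatively weakly compact, so any subsequence has a further subsequence with $\hat\pi_{t_n}\otimes\hat\mu_{t_n}\to\nu$ weakly; its $X$-marginal is $\lim_n\hat\mu_{t_n}=\mu$, so $\nu$ disintegrates as $\pi\otimes\mu$ for a kernel $\pi$. Because $\hat\pi_{t_n}$ is an optimal response to $\hat\mu_{t_n}$, the measure $\hat\pi_{t_n}\otimes\hat\mu_{t_n}$ is concentrated on the optimal state-action pairs of $Q_{t_n}$, so Lemma~\ref{lem:1} (applied with $Q_n:=Q_{t_n}$, which converge continuously to $Q$ as they converge uniformly) shows $\pi\otimes\mu$ concentrates on the optimal state-action pairs of $Q$. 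Passing to the limit in $\hat\mu_{t_n+1}(\cdot)=\int_X\int_A p(\cdot|x,a,\hat\mu_{t_n})\hat\pi_{t_n}(da|x)\hat\mu_{t_n}(dx)$ via \cite[Theorem~3.5]{serfozo1982convergence} gives $\mu(\cdot)=\int_X\int_A p(\cdot|x,a,\mu)\pi(da|x)\mu(dx)$, so $(\pi,\mu)\in\mathrm{MFE}_{\mathrm s}$. Now the uniqueness hypothesis is used: since the policy corresponding to $\mu$ in $\mathrm{MFE}_{\mathrm s}$ is unique, every subsequential limit $\pi\otimes\mu$ has the same $\pi$, so $\hat\pi_t\otimes\hat\mu_t\to\pi\otimes\mu$ weakly along the whole sequence, i.e. $(\hat\pi_t,\hat\mu_t)_t$ is an $\mathrm{ADOMFE}$.

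I expect the main obstacle to be the uniform-convergence step for the $Q$-functions: pushing the weak convergence $\hat\mu_t\to\mu$ through the nonlinear $\min$ inside the Bellman recursion. The diagonal-subsequence plus contraction-fixed-point argument is what resolves it, and it is precisely where Assumption~\ref{assump:2} (continuous convergence of $c$, weakly continuous convergence of $p$) and the compactness from Lemma~\ref{lem:f6} are indispensable. A secondary subtlety is that the uniqueness assumption concerns only the \emph{policy} attached to a given stationary $\mu$; one must verify that this forces uniqueness of the full joint limit, which holds because every subsequential limit is a bona fide element of $\mathrm{MFE}_{\mathrm s}$ sharing the same $\mu$.
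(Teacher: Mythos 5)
Your proposal is correct and follows essentially the same route as the paper's proof: compactness of $\mathcal C$ with a diagonal subsequence, passage to the limit via Assumption \ref{assump:2} and \cite[Theorem 3.5]{serfozo1982convergence}, the $\beta$-contraction of the stationary Bellman operator to pin the limit $Q$-functions to $Q^*$, Lemma \ref{lem:1} for the support of the limiting joint measure, and the unique-policy hypothesis to upgrade subsequential to full convergence. The only difference is organizational—you establish uniform convergence of $(Q_t)_t$ along the whole sequence before treating the joint measures, while the paper extracts both limits along one simultaneous subsequence—which does not change the substance of the argument.
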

\begin{proof}
As discussed prior to the statement of the theorem, the implication ``$1 \implies 2$'' is straightforward. We will now show that ``$2 \implies 1$''. Thus, suppose that $(\hat \mu_t)_t$ is weakly convergent in $\mathcal P(X)$ to some $\mu^*$. In this case, since $\lim_{t \to \infty}\hat \mu_t = \mu^*$ weakly, and for $t \ll T$, $\hat \mu_t$ satisfies \eqref{eq:ns}, we obtain that $\mu^*$ satisfies \eqref{eq:ns1}, by Lemma \ref{lem:2}. Thus, the difficult part will be to control the ``convergence'' of the optimal policies, for which, we will use Lemma \ref{lem:1}.

 Under our assumption, by \cite{SaBaRaSIAM}, there exists a MFE for MFG$_{\textrm{ns}}$, say $(\hat \pi_t ,\hat \mu_t)_{t=0}^{\infty}$. Then, as $\prod_{t=0}^{\infty} \mathcal P(X\times A)$ is compact, any subsequence of the family of joint probability measures $(\hat \pi_t \otimes \hat \mu_t)_t$ has a convergent subsequence of the joint probability measure, say $(\hat \pi_{n_t} \otimes \hat \mu_{n_t})_t$. Let 
\[
Q_{t}(x,a) = c(x,a,\hat \mu_t) + \beta \int_X \min_{a \in A} Q_{t+1}(y,a) p(dy|x,a,\hat \mu_t) 
\]
for all $t \geq 0$. Since $(\hat \mu_t)_t$ is weakly convergent, by Lemma \ref{lem:f7}, we also obtain that the sequence of $Q$-functions $(Q_t)_t$ is convergent. Let $\lim_t Q_t = Q^*.$ Then, by Lemma \ref{lem:1}, it holds that $\lim_t \hat \pi_{n_t} \otimes \hat \mu_{n_t} :=  \pi^*\otimes \mu^*$ concentrates on the optimal state-action pairs of $Q^*$, meaning that $( \pi_*, \mu^*)$ is a stationary MFE. Since for any $\mu$ there exists a unique policy $\pi$ that corresponds to it, it follows that the weak limit
\[
\lim_{ t \to \infty } \hat \pi_t \otimes\hat \mu_t = \pi^* \otimes \mu^*
\]
must hold as desired due to the sequential compactness of $\mathcal P(X \times A)$ under the weak convergence.
\end{proof}

As an application of the ADOMFE property, we show that if a family of EMFE converges to an ADOMFE, then the convergence is uniform over $t\in \mathbb N$ in the horizon length $T$.

\begin{proposition}\label{prop:5}
Suppose that Assumptions \ref{assump:2} and \ref{assump:3} hold. Let $(\tilde \pi^T_t,\tilde \mu^T_t)_{t=0}^{\infty} \in \mathrm{EMFE}_{\mathrm T}$ for all $T$. If $(\lim_{T} \tilde \pi^T_t \otimes \tilde \mu^T_t)_t = (\hat \pi_t \otimes \hat \mu_t)_t\in \mathrm{ADOMFE},$ then for all $\varepsilon > 0$ there exists a sufficiently large $\tilde t(\varepsilon)$, $\tilde T(\tilde t(\varepsilon))$, and $T(\varepsilon)$ such that 
    \[
    W_1( \tilde \pi^{\tilde T(\tilde t(\varepsilon))}_{\tilde t(\varepsilon)} \otimes \tilde \mu^{\tilde T(\tilde t(\varepsilon))}_{\tilde t(\varepsilon)}, \hat \pi_t \otimes \hat \mu_t) < \varepsilon
    \]
    for all $t \ge T(\varepsilon)$.
{In particular, tail of the ADOMFE $(\hat \pi_t \otimes \hat \mu_T)_{t=0}^{\infty}$ can be uniformly approximated by the EMFE $\{(\tilde \pi^T_t,\tilde \mu^T_t)_{t=0}^{\infty} : T \in \mathbb N\}$.}
\end{proposition}
\begin{proof}
Let $\lim_t \hat \pi_t \otimes \hat \mu_t = \pi \otimes \mu.$ Then, for a given $\varepsilon>0$, we have
\[
W_1( \pi \otimes \mu , \hat \pi_t \otimes \hat \mu_t) < \frac{\varepsilon}4
\]
for all sufficiently large $t\ge T(\varepsilon)$ for some $T(\varepsilon)$. Thus, picking a sufficiently large $\tilde T(\tilde t(\varepsilon))$ so that for some $T(\varepsilon) \le \tilde t (\varepsilon) \ll  \tilde T(\tilde t(\varepsilon))$ we have
\[
W_1(\pi^{\tilde T(\tilde t(\varepsilon))}_{\tilde t(\varepsilon)} \otimes \mu^{\tilde T(\tilde t(\varepsilon))}_{\tilde t(\varepsilon)}, \hat \pi_{\tilde t(\varepsilon)} \otimes \hat \mu_{\tilde t(\varepsilon)}) < \frac{\varepsilon}2,
\]
we obtain that 
\begin{align*}
&W_1(\pi^{\tilde T(\tilde t(\varepsilon))}_{\tilde t(\varepsilon)} \otimes \mu^{\tilde T(\tilde t(\varepsilon))}_{\tilde t(\varepsilon)}, \hat \pi_t \otimes \hat \mu_t)
\\&< W_1(\pi^{\tilde T(\tilde t(\varepsilon))}_{\tilde t(\varepsilon)} \otimes \mu^{\tilde T(\tilde t(\varepsilon))}_{\tilde t(\varepsilon)}, \hat \pi_{\tilde t(\varepsilon)} \otimes \hat \mu_{\tilde t(\varepsilon)}) + W_1(\pi \otimes \mu, \hat \pi_{\tilde t(\varepsilon)} \otimes \hat \mu_{\tilde t(\varepsilon)}) + W_1(\pi \otimes \mu, \hat  \pi_t \otimes \hat \mu_t) < \varepsilon
\end{align*}
for all $t\ge T$ by the triangle inequality.
\end{proof}

\subsection{Applications to Learning Theory}\label{sect:4.3}

In this subsection, we discuss the implications of approximating infinite-horizon MFE by finite-horizon MFE in learning theory. In particular, we are interested in demonstrating that we can learn stationary MFE of an underlying unknown system by learning the finite-horizon MFE first. A particular setting of interest where learning stationary MFE is not directly applicable is the model-based learning frameworks. In these frameworks, the learned model is often tailored to the solution concept of interest \cite{pasztor2023efficient,ramponi2024imitation}. These methods typically generate data through simulation and then update the model using the simulated outcomes \cite{huang2024statistical,pasztor2023efficient}. In discrete-time MFGs, such simulations are usually carried out in the finite-horizon setting for reasons of tractability. As a result, most model-based methods for discrete MFGs learn models that approximate the true finite-horizon MFE rather than the infinite-horizon one directly. However, since the learned model is based on its fit to the finite-horizon MFE of the underlying setting, this does not provide us any information whether the learned model can be used in the infinite-horizon setting. Using the results of the previous section, we show that models calibrated by using finite-horizon MFGs in this way can also serve as accurate proxies for infinite-horizon MFE.

So far, we have only discussed the approximation of a non-stationary MFE via a finite-horizon MFE. Hidden in the proof of the result above is the following approximation property that relates a finite MFE to a stationary MFE, which relies on the approximation of non-stationary MFE via finite-horizon MFE. The proof is essentially similar to that of Proposition \ref{prop:5}.

\begin{corollary}\label{cor:2}
Suppose that Assumptions \ref{assump:2} and \ref{assump:3} hold. Let
\((\tilde \pi_t^T,\tilde \mu_t^T)_t \in \mathrm{EMFE}\) be the equilibrium
sequence associated with \(\mathrm{MFG}_T\). Assume that
\(
(\tilde \pi_t^T \otimes \tilde \mu_t^T)_t \to (\hat \pi_t \otimes \hat \mu_t)_t\)
as \(T \to \infty,
\)
where \((\hat \pi_t,\hat \mu_t)_t \in \mathrm{ADOMFE}\). By definition, there exists a stationary MFE $(\pi,\mu)$ such that
\(
\hat \pi_t \otimes \hat \mu_t \to \pi \otimes \mu\) as \(t \to \infty.
\)
Then, for every \(\varepsilon > 0\),
there exists \(t_\varepsilon\) such that for every \(t \ge t_\varepsilon\),
there exists \(T(t,\varepsilon)\) such that, for all \(T \ge T(t,\varepsilon)\),
\[
W_1\!\bigl(\tilde \pi_t^T \otimes \tilde \mu_t^T,\ \pi \otimes \mu\bigr)
< \varepsilon.
\]
\end{corollary}
\begin{proof}
    By Theorem \ref{thrm:q1}, we have $\lim_t \hat \pi_t \otimes  \hat \mu_t=\pi\otimes \mu$ under the metric $W_1$. Let $\varepsilon>0$. Then there exists $t_{\varepsilon}>0$ such that 
    \[
    W_1(\hat \pi_t \otimes \hat \mu_t,\pi \otimes \mu) < \frac{\varepsilon}2
    \]
    for all $t \ge t_{\varepsilon}$. Fix any $t_* \ge t_{\varepsilon}$. Then, we have
    \[
    \lim_{T \to \infty} W_1(\tilde \pi^T_{t_*} \otimes \tilde \mu^T_{t_*} , \hat \pi_{t_*} \otimes \hat \mu_{t_*})=0,
    \]
    meaning that there exists $T_{t_*}$ such that for all $T \ge T_{t_*}$ we must have
    \[
    W_1(\tilde \pi^T_{t_*} \otimes \tilde \mu^T_{t_*},\hat \pi_{t_*}\otimes  \hat \mu_{t_*}) \le \frac{\varepsilon}2.
    \]
    Using triangle inequality, for any $T \ge T_{t_*}$ we obtain 
    \[
    W_1(\tilde \pi^T_{t_*}\otimes \tilde \mu^{T}_{t_*},\pi\otimes \mu) \le W_1(\tilde \pi^T_{t_*}\otimes \tilde \mu^{T}_{t_*}, \hat \pi_{t_*}\otimes \hat \mu_{t_*}) + W_1(\hat \pi_{t_*}\otimes\hat \mu_{t_*},\pi\otimes\mu) \le \varepsilon,
    \]
    as desired.
\end{proof}

\begin{remark}
    Corollary \ref{cor:2} indeed implies the convergence of the state-measures induced by the finite-horizon MFE to that of the stationary MFE. One can also deduce the convergence of the corresponding $Q$-functions in a similar manner. However, in general, we might not have the convergence of the policies themselves. To recover the convergence of the policies, we need additional assumptions, such as the corresponding stationary MFE having a unique corresponding optimal policy; see Lemma \ref{lem:f6}.
\end{remark}

As an application of the corollary above, we consider a hypothetical reinforcement learning scenario in which one learns an approximate model parameter for a finite-horizon MFG whose MFE is close to the original stationary MFE. We show that the stationary MFE induced by a model learned through finite-horizon MFE can then be used as a proxy for the true stationary MFE.

\begin{corollary}\label{cor:6}
    Suppose that Assumptions \ref{assump:2} and \ref{assump:3} hold. For sufficiently large $T$, suppose that for the finite-horizon games $\mathrm {MFG}_{\mathrm T}=(X,A,C,P,\mu_0,T)$ and $\overline {\mathrm{MFG}}_{\mathrm T}=(X,A,\overline C,\overline P,\mu_0,T)$ we have that the flow of joint probability measures generated by the MFE of $(\pi^T_t,\mu^T_t)_t\in\mathrm {MFG}_{\mathrm T}$ and $(\bar \pi^T_t,\bar \mu^T_t)\in\overline {\mathrm{MFG}}_{\mathrm T}$ satisfy $W_1(\pi^T \otimes \mu^T_t,\bar \pi^T_t \otimes \bar \mu^T_t) < \varepsilon$ for sufficiently large $T$. We extend $(\pi^T_t,\mu^T_t)_t$ and $(\bar \pi^T_t,\bar \mu^T_t)_t$ an infinite-flow, and denote these extensions with the same notation. Let $\lim_{T \to \infty} (\pi^T_t \otimes \mu^T_t)_t =(\pi_t,\mu_t)_t,$ and $\lim_{T \to \infty}(\bar \pi^T_t \otimes \bar \mu^T_t) =(\bar \pi^T_t, \bar \mu^T_t)_t$ be such that $(\pi_t,\mu_t)_t,(\bar \pi_t,\bar \mu_t)_t\in \mathrm{ADOMFE}.$ Let $\lim_t \pi_t\otimes \mu_t = \pi\otimes \mu$ and $\lim_{t \to \infty} \bar \pi_t \otimes \bar \mu_t = \bar \pi \otimes \bar \mu$. Then, we have $(\pi,\mu) \in \mathrm {MFG}_{\mathrm s},$ $(\bar \pi ,\bar \mu) \in \overline{\mathrm{MFG}}_{\mathrm s},$ and $W_1( \pi \otimes \mu, \bar \pi \otimes \bar \mu) < 3\varepsilon$.
\end{corollary}

\begin{proof}
    By Corollary \ref{cor:2}, both $(\tilde \pi^T_t)_t$ and $(\overline \pi^T_t)_t$ converge to the optimal policies of MFG$_{\mathrm s}$ and $\overline { \mathrm {MFG}}_{\mathrm s}$, respectively. So, for a given $\varepsilon>0$, there exists a threshold $T(\varepsilon).$
    
    Then, by the triangle inequality, for all $t$, we have that
    \[
    W_1( \pi \otimes \mu, \overline \pi \otimes \overline \mu) < W_1( \pi \otimes \mu, \pi^T_t \otimes \mu^T_t) + W_1(\overline \pi \otimes \overline \mu, \overline \pi^T_t \otimes \overline \mu^T_t) + W_1(\pi^T_t \otimes \mu^T_t,\overline \pi^T_t \otimes \overline \mu^T_t),
    \]
    where $\pmb {\mu}$ and $\pmb {\overline \mu}$ are defined as in Corollary \ref{cor:2}.
    The terms $W_1( \pi_t \otimes \mu_t, \pi^T_t \otimes \mu^T_t)$ and $W_1(\overline \pi_t \otimes \overline \mu_t, \overline \pi^T_t \otimes \overline \mu^T_t)$ are small by Corollary \ref{cor:2} for large $t$ and sufficiently large $T$. By our running assumption, we have that $W_1(\pi^T_t \otimes \mu^T_t,\overline \pi^T_t \otimes \overline \mu^T_t)$ is also small, and thus $W_1( \pi_t \otimes \mu_t, \overline \pi_t \otimes \overline \mu_t)$ must be sufficiently small for sufficiently large $t$.
\end{proof}
{
\begin{remark}
The ADOMFE property required in Corollary \ref{cor:6} follows from the stability of the dynamical system
\[
\begin{bmatrix}
    Q_t \\
    \mu_t
\end{bmatrix}
=
\begin{bmatrix}
    H_{1,t}(Q_{t+1},\mu_t) \\
    H_{2,t}(Q_{t-1},\mu_{t-1})
\end{bmatrix}.
\]
In the regularized case, Theorem \ref{prop:2} provides a sufficient condition for the global asymptotic stability of this dynamical system via contraction.
\end{remark}
}
\section{Conclusion}
In this work, we have established improved contraction rates for finite-horizon MFGs and shown that finite-horizon MFGs can still be contractive even when the contraction conditions found in the literature for the infinite-horizon setting can fail. We have demonstrated that accumulation points of finite-horizon MFE are non-stationary MFE under mild conditions. Furthermore, we have studied the relationship between stationary MFE and finite-horizon MFE and provided conditions under which we can approximate stationary MFE with finite-horizon MFE. Using the contraction result established for finite-horizon MFE, we have obtained finite-time error bounds between finite-horizon and infinite-horizon MFE. Through these error bounds we have also obtained a new uniqueness criterion for infinite-horizon non-stationary MFE. As an application, we have shown that when two MFGs have finite-horizon MFE that are close under the $W_1$ metric, the corresponding stationary MFE are also close under $W_1$.
\printbibliography
\appendix
\section{Proofs of Subsection \ref{sect:3.1}}\label{app:a}
In this appendix, we will provide complete proofs for the technical results presented in Subsection \ref{sect:3.1}. Almost all of the proofs here are slight modifications of those in \cite{anahtarci2023q}. The differences are due to the fact that we are working with finite-horizon MFGs. Nevertheless, these technical results are fundamental for the purpose of Subsection \ref{sect:3.1}. For the sake of completeness, we will provide complete proofs for these results.
\subsection{Proof of Lemma \ref{lem:f0}}\label{sect:lem:f0}
\begin{proof}[Proof of Lemma \ref{lem:f0}]
    For $t=T-1$, we have
    \begin{align*}
    \sup_{u \in \mathcal P(A)} |Q^{\pmb \mu^T}_{T-1}(x,u) - Q^{\pmb \mu^T}_{T-1}(\tilde x,u)| &= \sup_{u \in \mathcal P(A)}|C(x,u,\mu_{T-1}^T)-C(\tilde x, u ,\mu_{T-1}^T)|
    \\&\leq L_1 1_{\{x \not = \tilde x\}}\leq \frac{L_1}{1-\frac{\beta K_1}2} 1_{\{x \not = \tilde x\}}.
    \end{align*}
    We suppose that up until time $t+1$, the $Q$-functions $Q^{\pmb \mu}_T$ satisfy the Lipschitz property
    \[
    \sup_{x,\tilde x \in X}\left|Q^{\pmb \mu^T}_{t+1}(x,u) -Q^{\pmb \mu^T}_{t+1}(\tilde x,u)\right| \le \frac{L_1}{1-\frac{\beta K_1}2}1_{\{x \not = \tilde x\}}.
    \]
    We recall the concentration inequality 
    \begin{align*}
     &\left|\sum_{y \in X} \min_{b \in A}Q^{\pmb\mu^T}_{t+1}(y,b)\left( P(y|x,u,\mu_t^T)-P(y|\tilde x,u,\mu^T_t) \right) \right|
     \\& \le \frac{\sup_{x \in X}\min_{b \in A}Q^{\pmb\mu^T}_{t+1}(y,b)-\min_{y \in X}\min_{b \in A}Q^{\pmb\mu^T}_{t+1}(y,b)}{2} \| P(\cdot|x,u,\mu_t^T) - P(\cdot|\tilde x,u,\mu_t^T)\|_1
     \\& \le \frac{L_1}{1-\frac{\beta K_1}2} \frac 12  \| P(\cdot|x,u,\mu_t^T) - P(\cdot|\tilde x,u,\mu_t^T)\|_1,
    \end{align*}
    where the last line follows from the induction hypothesis above.
    
    Going backwards in time, using Lemma \ref{lem:a} and the assumption above, we obtain
    \begin{align*}
    &\sup_{u \in \mathcal P(A)} | Q^{\pmb\mu^T}_t(x,u) - Q^{\pmb \mu^T}_t(\tilde x,u)| 
    \\&\leq \sup_{u \in \mathcal P(A)}\left(|C(x,u,\mu_t^T)-C(\tilde x, u ,\mu_t^T)| +
    \beta \left|\sum_{y \in X} \min_{b \in A}Q^{\pmb\mu^T}_{t+1}(y,b)\left( P(y|x,u,\mu_t^T)-P(y|\tilde x,u,\mu_t^T) \right) \right|\right)
    \\& \le L_1 1_{\{x \not = \tilde x\}} + \sup_{u \in \mathcal P(A)}\beta \left|\sum_{y \in X} \min_{b \in A}Q^{\pmb\mu^T}_{t+1}(y,b)\left( P(y|x,u,\mu_t^T)-P(y|\tilde x,u,\mu_t^T) \right) \right|
    \\&\leq \left [L_1 + \frac{\beta}2 \frac{L_1K_1(1-(\frac{\beta K_1}2)^{T-t})}{\left(1-\frac{\beta K_1}2\right)}\right] 1_{\{x \not = \tilde x\}} \leq \left [L_1 + \frac{\beta}2 \frac{L_1K_1}{(1-\frac{\beta K_1}2)}\right] 1_{\{x \not = \tilde x\}}.
    \end{align*}
    
    Lipschitz continuity of the minimizers diretly follows from \cite[Lemma 3]{anahtarci2023q}.
\end{proof}
\subsection{Proof of Lemma \ref{lem:f1}}\label{sect:lem:f1}
\begin{proof}[Proof of Lemma \ref{lem:f1}]
    The proof closely follows that of \cite[Proposition 2]{anahtarci2023q} and provide here an outline of the proof for completeness, not including some of the details in the calculations. Fix any $\pmb \mu, \pmb {\tilde{\mu}} \in \{\mu_0\}\times P(X)^{T-1}$. For a given $Q$-function, $Q$, let $f(Q)(x) = \arg\min_{u \in \mathcal P(A)} Q(x,u)$. Using Lemma~\ref{lem:f0}, we obtain the following by applying the triangle inequality
\begin{align*}
    &  \| H_{2,t}(Q^{\pmb {\mu}^T}_{t-1},\mu_{t-1}^T) - H_{2,t}(Q^{\pmb {\tilde \mu}^T}_{t-1},\tilde \mu_{t-1}^T) \|_{\mathrm{TV}} 
    \\&= \frac 12\sum_y \bigg|\sum_x P(y|x, f(Q^{\pmb {\mu}^T}_{t-1})(x),\mu_{t-1}^T)\mu_{t-1}^T(x) - \sum_x P(y|x,f(Q^{\pmb {\tilde \mu}^T}_{t-1})(x), \tilde{\mu}_{t-1}^T) \tilde{\mu}_{t-1}^T(x)\bigg| \\
    &\leq \frac 12\sum_y \bigg|\sum_x P(y|x,f(Q^{\pmb {\mu}^T}_{t-1})(x),\mu_{t-1}^T)\mu_{t-1}^T(x) - \sum_x P(y|x, f(Q^{\pmb {\tilde \mu}^T}_{t-1})(x), \tilde{\mu}_{t-1}^T)\mu_{t-1}^T(x)\bigg| \\
    &\quad + \frac 12\sum_y \bigg|\sum_x P(y|x, f(Q^{\pmb {\tilde \mu}^T}_{t-1})(x), \tilde{\mu}_{t-1}^T)\mu_{t-1}^T(x) 
    - \sum_x P(y|x, f(Q^{\pmb {\tilde \mu}^T}_{t-1})(x), \tilde{\mu}_{t-1}^T) \tilde{\mu}_{t-1}^T(x)\bigg|
    \\& = (a).
\end{align*}
Note that 
\begin{align*}
    &\sum_y \bigg|\sum_x P(y|x,f(Q^{\pmb {\mu}^T}_{t-1})(x),\mu^T_{t-1})\mu^T_{t-1}(x) 
    - \sum_x P(y|x, f(Q^{\pmb {\tilde \mu}^T}_{t-1})(x), \tilde{\mu}_{t-1}^T)\mu_{t-1}^T(x)\bigg|
    \\& \le \sum_x \left\| P(\cdot|x,f(Q^{\pmb { \mu}^T}_{t-1})(x),\mu_{t-1}^T) - P(\cdot|x,f(Q^{\pmb {\tilde \mu}^T}_{t-1})(x), \tilde{\mu}_{t-1}^T) \right\|_1 \mu_{t-1}^T(x),
\end{align*}
and using \cite[Lemma A.1]{kontorovich2008concentration} we obtain
\begin{align*}
    &\sum_y \bigg|\sum_x P(y|x, f(Q^{\pmb {\tilde \mu}^T}_{t-1})(x), \tilde{\mu}_{t-1}^T)\mu_{t-1}^T(x) 
    - \sum_x P(y|x, f(Q^{\pmb {\tilde \mu}^T}_{t-1})(x), \tilde{\mu}^T_{t-1}) \tilde{\mu}^T_{t-1}(x)\bigg|
    \\& \le \sum_y\left( \frac{\sup_{x \in X}P(y|x,f(Q^{\pmb {\tilde \mu}^T}_{t-1})(x), \tilde{\mu}^T_{t-1})-\inf_{x \in X}P(y|x,f(Q^{\pmb {\tilde \mu}^T}_{t-1})(x), \tilde{\mu}^T_{t-1})}{2}\right)\|\mu_{t-1}^T-\tilde \mu_{t-1}^T\|_1
    \\& \le \frac 12\sup_{x,x'\in X}\| P(\cdot|x,f(Q^{\pmb {\tilde \mu}^T}_{t-1})(x), \tilde{\mu}^T_{t-1})-P(\cdot|x',f(Q^{\pmb {\tilde \mu}^T}_{t-1})(x'), \tilde{\mu}^T_{t-1})\|_1 \|\mu_{t-1}^T-\tilde \mu^T_{t-1}\|_1.
\end{align*}
By Assumption~\ref{assump:1}, we must have
\begin{align*}
&\| P(\cdot|x,f(Q^{\pmb {\tilde \mu}^T}_{t-1})(x), \tilde{\mu}^T_{t-1})-P(\cdot|x',f(Q^{\pmb {\tilde \mu}^T}_{t-1})(x'), \tilde{\mu}^T_{t-1})\|_1 
\\& \le K_1 \left( 1_{\{x\not = x'\}}+\| f(Q^{\pmb {\tilde \mu}^T}_{t-1})(x)-f(Q^{\pmb {\tilde \mu}^T}_{t-1})(x')\|_1\right).
\end{align*}
Furthermore, by \cite[Lemma 3]{anahtarci2023q}, we obtain
\begin{align*}
    \| f(Q^{\pmb {\tilde \mu}^T}_{t-1})(x)-f(Q^{\pmb {\tilde \mu}^T}_{t-1})(x')\|_1 \le \frac{\bar L}{\rho} 1_{\{x \not = x'\}}.
\end{align*}
Thus, as desired, it holds that
\begin{align*}
    (a) &\leq \frac 12\sum_x \left\| P(\cdot|x,f(Q^{\pmb { \mu}^T}_{t-1})(x),\mu_{t-1}) - P(\cdot|x,f(Q^{\pmb {\tilde \mu}^T}_{t-1})(x), \tilde{\mu}_{t-1}) \right\|_1 \mu_{t-1}^T(x) 
    \\&\qquad + \frac{K_1}4\left( 1 + \frac{\bar L}{\rho}\right) \|\mu_{t-1}^T-\tilde{\mu}_{t-1}^T\|_1 \\
    &\leq \frac 12K_1\left( \sup_x \left\|f(Q^{\pmb { \mu}^T}_{t-1})(x) - f(Q^{\pmb {\tilde \mu}^T}_{t-1})(x) \right\|_1 + \|\mu_{t-1}^T - \tilde{\mu}_{t-1}^T\|_1 \right)
    +\frac{K_1}4\left( 1 + \frac{\bar L}{\rho}\right) \|\mu_{t-1}^T-\tilde{\mu}^T_{t-1}\|_1 \\
    &\leq \frac{K_1}{2\rho}\| Q^{\pmb \mu^T}_{t-1} - Q^{\pmb {\tilde \mu}^T}_{t-1}\|_{\infty}+\bar K \|\mu_{t-1}^T-\tilde{\mu}^T_{t-1}\|_{\mathrm{TV}}.
\end{align*}

For the term $\| H_{2,1}(Q^{\pmb { \mu}^T}_0,\mu^T_0) - H_{2,1}(Q^{\pmb {\tilde \mu}^T}_0,\mu_0^T) \|_{\mathrm{TV}},$ since $\mu_0$ is fixed, the same proof as above yields the desired result.
\end{proof}
\subsection{Proof of Lemma \ref{lem:f2}}\label{sect:lem:f2}
\begin{proof}[Proof of Lemma \ref{lem:f2}]
    Since the initial state-measures of the flows $\pmb \mu^T$ and $\pmb {\tilde \mu}^T$ are the same, for $t=0$, we obtain the following:
    \[
        \| Q^{\pmb \mu^T}_0 - Q^{\pmb {\tilde \mu^T}}_0 \|_{\infty} \leq \beta \left|\sum_{y \in X} (Q^{\pmb \mu^T}_1 - Q^{\pmb {\tilde \mu}^T}_1)P(y|x,a,\mu_0) \right| \leq \beta \| Q^{\pmb {\mu}^T}_1 - Q^{\pmb {\tilde \mu}^T}_1 \|_{\infty}.
    \]
    For the last term, we similarly obtain
    \[
    \| Q^{\pmb {\mu}^T}_{T-1} - Q^{\pmb {\tilde \mu}^T}_{T-1} \|_{\infty} \leq \sup_{(x,a) \in X \times A}|C(x,a,\mu_{T-1}^T)-C(x,a,\tilde \mu_{T-1}^T)| \leq L_1 \| \mu^T_{T-1} - \tilde \mu^T_{T-1}\|_{\mathrm{TV}}.
    \]
    For the intermediate terms, using similar observations as above, we have
    \begin{align*}
       & \| Q^{\pmb {\mu}^T}_t - Q^{\pmb {\tilde \mu}^T}_t \|_{\infty} 
       \\&\leq \sup_{(x,u)\in X \times \mathcal P(A)}| C(x,u,\mu_t^T) - C(x,u,\tilde \mu_t^T)| 
        \\&  + \beta \sup_{(x,u)\in X \times \mathcal P(A)}\left|\sum_{y \in X}  \min_{v \in \mathcal P(A)}Q^{\pmb \mu^T}_{t+1}(y,v)P(y|x,u,\mu_t^T) - \sum_{y \in X}  \min_{b \in \mathcal P(A)}Q^{\pmb {\tilde \mu}^T}_{t+1}(y,v)P(y|x,u,\tilde \mu_t^T) \right|.
    \end{align*}
    As a consequence of Lemma \ref{lem:a}, we obtain
    \[
    | C(x,u,\mu_t^T) - C(x,u,\tilde \mu_t^T)| \le 2L_1 \| \mu_t^T - \tilde \mu^T_{t}\|_{\mathrm{TV}}.
    \]
    For the remainder term, by the triangle inequality, we obtain
    \begin{align*}
     &\left|\sum_{y \in X}  \min_{v \in \mathcal P(A)}Q^{\pmb \mu^T}_{t+1}(y,v)P(y|x,u,\mu_t^T) - \sum_{y \in X}  \min_{b \in \mathcal P(A)}Q^{\pmb {\tilde \mu}^T}_{t+1}(y,v)P(y|x,u,\tilde \mu_t^T) \right|
     \\&\le \underbrace{\left|\sum_{y \in X}  \min_{v \in \mathcal P(A)}Q^{\pmb \mu^T}_{t+1}(y,v)P(y|x,u,\mu_t^T) - \sum_{y \in X}  \min_{v \in \mathcal P(A)}Q^{\pmb \mu^T}_{t+1}(y,v)P(y|x,u,\tilde \mu_t^T) \right|}_{(a)}
     \\& \qquad +\underbrace{\left|\sum_{y \in X}  \min_{v \in \mathcal P(A)}Q^{\pmb \mu^T}_{t+1}(y,v)P(y|x,u,\tilde \mu_t^T) - \sum_{y \in X}  \min_{v \in \mathcal P(A)}Q^{\pmb {\tilde \mu}^T}_{t+1}(y,v)P(y|x,u,\tilde \mu_t^T) \right|}_{(b)}.
    \end{align*}
    
    To control the term $(a)$, we recall the following concentration inequality for probability measures:
    \[
    \left|\sum_{x \in X} F(x)\mu(x)  - \sum_{x \in X}F(x)\tilde \mu(x)\right| \le \frac{\sup_{x \in X}F(x) - \inf_{x \in X}F(x)}2\| \mu-\tilde \mu\|_1,
    \]
    see \cite{kontorovich2008concentration}.
    Thus,
    \[
    (a) \le 2\left(\sup_{x \in X}\min_{v \in \mathcal P(A)}Q^{\pmb \mu^T}_{t+1}(x,v)-\inf_{x \in X}\min_{v \in \mathcal P(A)}Q^{\pmb \mu^T}_{t+1}(x,v)\right) K_1 \|\mu_t^T-\tilde  \mu_t^T\|_{\mathrm{TV}}
    \]
    by Lemma \ref{lem:a}, where the factor $2$ is due to the total variation norm. Using Lemma \ref{lem:f0}, we obtain
    \begin{align*}
    &\sup_{x \in X}\min_{v \in \mathcal P(A)}Q^{\pmb \mu^T}_{t+1}(x,v)-\inf_{x \in X}\min_{v \in \mathcal P(A)}Q^{\pmb \mu^T}_{t+1}(x,v) \le  \sup_{x,\tilde x \in X}\left| \min_{v \in \mathcal P(A)}Q^{\pmb \mu^T}_{t+1}(x,v)-\min_{v \in \mathcal P(A)}Q^{\pmb \mu^T}_{t+1}(\tilde x,v) \right|
    \\& \le \sup_{u \in \mathcal P(A)}\sup_{x,\tilde x \in X}\left| Q^{\pmb \mu^T}_{t+1}(x,v)-Q^{\pmb \mu^T}_{t+1}(\tilde x,v)\right|\le \frac{L_1}{1-\frac{\beta K_1}2}1_{\{x \not = \tilde x\}};
    \end{align*}
    thus, 
    \[
    (a) \le 2 \bar L K_1\| \mu_t^T-\tilde \mu_t^T\|_{\mathrm{TV}}.
    \]

    It remains to upper bound $(b)$. Since the stochastic kernel $P$ is evaluated under $(x,u,\tilde \mu_t),$ we obtain
    \begin{align*}
        (b) &\le \sup_{x \in X}\left|\min_{v \in \mathcal P(A)}Q^{\pmb \mu^T}_{t+1}(x,v)-  \min_{v \in \mathcal P(A)}Q^{\pmb {\tilde \mu}^T}_{t+1}(x,v)\right|
        \\& \le \sup_{x \in X}\sup_{u \in \mathcal P(A)}\left|Q^{\pmb \mu^T}_{t+1}(x,v)- Q^{\pmb {\tilde \mu}^T}_{t+1}(x,v)\right|\le \|Q^{\pmb \mu^T}_{t+1} -Q^{\pmb {\tilde \mu}^T}_{t+1}\|_{\infty}.
    \end{align*}

    Gathering all the bounds above, we obtain the desired inequality as follows:
    \begin{align*}
        \| Q^{\pmb {\mu}^T}_t - Q^{\pmb {\tilde \mu}^T}_t \|_{\infty} &\le \left(2L_1 -2 \frac{\beta L_1K_1}{1-\frac{\beta K_1}2} \right)\|\mu_t^T-\tilde \mu_t^T\|_{\mathrm{TV}}+\beta \| Q^{\pmb {\mu}^T}_{t+1} - Q^{\pmb {\tilde \mu}^T}_{t+1}\|
        \\& \le 2\bar L\|\mu_t^T -\tilde \mu_t^T\|_{\mathrm{TV}}+\beta \| Q^{\pmb {\mu}^T}_{t+1} - Q^{\pmb {\tilde \mu}^T}_{t+1}\|.
    \end{align*}
\end{proof}

\section{Proofs of Subsection \ref{sect:b}}\label{app:b}
In this appendix, we will provide proofs for the technical results stated in Subsection \ref{sect:b}.
\subsection{Proof of Lemma \ref{lem:g1}}\label{sect:lem:g1}
\begin{proof}[Proof of Lemma \ref{lem:g1}]
    If $h=(h_1,\cdots,h_T)$, $A_Th= \rho(A_T)h$ implies that
    \[
    \left(\bar K + \frac{K_1}{\rho}\bar L\right)h_1 + \sum_{j=2}^{T-1}\frac{K_1}{\rho}\bar L \beta^{j-1} h_j + \frac{K_1}{\rho}\bar L \beta^{T-1}h_T = \rho(A_T)h_2
    \]
   and thus we have 
\[
    \rho(A_T)h_1 = \sum_{j=1}^{T-1}\frac{K_1}{\rho}\bar L \beta^j h_j + \frac{K_1}{\rho}\bar L \beta^{T}h_T= \beta \left( \rho(A_T)h_2 - \bar K h_1 \right).
\]
    Similarly, for $j=2 \cdots, T-1$, it also holds that
    \(
    \rho(A_T)h_j = \hat K h_{j-1} - \bar K \beta h_j + \rho(A_T)\beta h_{j+1}.
    \)
    The last row of $T(\rho(A_T))$ follows directly from that of $A_T$.
\end{proof}
\subsection{Proof of Lemma \ref{lem:g2}}\label{sect:lem:g2}
\begin{proof}[Proof of Lemma~\ref{lem:g2}]
    Suppose not. Let $k$ be the largest positive eigenvalue of $T(\rho(A_T))$, which exists as $\rho(A_T)>0$ is an eigenvalue of $T(\rho(A_T))$. Then we have 
    \[
    \left(T(\rho(A_T))+(\bar K \beta + \varepsilon) I\right)g = (k+(\bar K \beta + \varepsilon) )g
    \]
    for a unique positive vector $g$ (up to constant multiplicity) by the Perron-Frobenius theorem. However, we also have \[
    \left(T(\rho(A_T))+(\bar K \beta + \varepsilon) I\right)h = (\rho(A_T)+\bar K\beta + \varepsilon)h\] for some positive vector $h$ by Lemma \ref{lem:g1}. However, note that eigenvectors of $T(\rho(A_T))+(\bar K \beta + \varepsilon) I$ and $A_T$ that corresponds to $\rho(A_T)$ are the same. Thus, we must have $\rho(A_T) = k$ as the span of positive eigenvectors must be one-dimensional by the Perron-Frobenius theorem.
\end{proof}
\subsection{Proof of Lemma \ref{lem:222}}\label{sect:lem:222}
\begin{proof}[Proof of Lemma~\ref{lem:222}]
    We recall a simple well-known fact that the eigenvalues of $T(\rho(A_T))$ are real \cite[3.1.P22]{horn2012matrix} as the terms on the sub-diagonal and super-diagonal are positive, which can be proven directly by using a diagonal transformation. By \cite{yueh2005eigenvalues}, eigenvalues of $ T(\rho(A_T))$ are of the form
\[
- \bar K \beta + 2 \sqrt{ \rho(A_T) \beta \hat K} \cos \theta,
\]
where $\theta \in \mathbb C$ (so $\cos \theta$ can take any value as it an entire complex function). For the eigenvalue $\lambda = \rho(A_T)$, for some $z \in \mathbb C$, using the identity $z= e^{i \theta}$, we can write
\(
z + \bar z = 2 \cos \theta,
\)
where $\bar z$ is the complex conjugate of $z$, because $\cos\theta$ must be a real number in this case \cite[Eq. 4]{yueh2005eigenvalues}. We claim that, the same resul holds also for any given eigenvalue $\lambda$ of $T(\rho(A_T)).$ In particular, we want to show that any $\lambda$ can be written in the form
\begin{equation}\label{eq:q24}
\lambda =- \bar K \beta + \sqrt{ \rho(A_T) \beta \hat K} \left( z + z^{-1} \right)
\end{equation}
for some $z \in \mathbb C$ because for every $\tilde z \in \mathbb C$, there exists $z$ such that $\tilde z = z + z^{-1}$. We aim to identify the polynomial which gives $z + z^{-1}$ as roots in order to identify the eigenvalues of $T(\rho(A_T))$.
Fix $z$ and $\lambda$ as above. Let $u=(u_1,u_2,\cdots,u_{T-1})$. Then, if 
\begin{equation}\label{eq:ssssss5}
T(\rho(A_T))u = \lambda u,
\end{equation}
for all $1<k \leq T-1$, we want to find $(u_k)_{k=1}^{T-1}$ such that
\begin{equation}\label{eqref:q23}
\hat K u_{k-1} - \bar K \beta u_k + \rho(A_T)\beta u_{k+1} = \lambda u_k,
\end{equation}
which can be deduced from the recursive relations that arise from the relation \eqref{eq:ssssss5}.

To solve the difference equations above, we propose two solutions for \eqref{eqref:q23} as $u_k= y^k_1$ and $u_k = y^k_2$, where $y_1 = \sqrt{\frac{ \hat K }{\rho(A_T)\beta}}z$ and $y_2 = \sqrt{\frac{ \hat K }{\rho(A_T)\beta}} z^{-1}$. For $u_k = y^k_1$, we have
\[
y^k_1\left( \hat Ky_1^{-1} - \bar K \beta  + \rho(A_T)\beta y_1  \right) = \lambda y^k_1.
\]
Since 
\[
\hat Ky^{-1}_1 - \bar K \beta  + \rho(A_T)\beta y_1= -\bar K \beta + \sqrt{\hat K \rho(A_T)\beta} z^{-1}+ \sqrt{\rho(A_T)\beta \hat K} z,
\]
$y_1$ satisfies \eqref{eqref:q23}. Similarly, for $u_k = y^k_2$, we have
\[
y^k_2 \left( \sqrt{\hat K \rho(A_T)\beta} z -\bar K \beta +\sqrt{ \rho(A_T)\beta \hat K} z^{-1} \right) = \lambda u_k.
\]
Thus, $u_k= y^k_1,y^k_2$ are candidates for solutions for \eqref{eqref:q23} for $1< k < T$. It follows that $u_k = y^k_1+cy^k_2$ also satisfies \eqref{eqref:q23} for all $1<k \leq T-1$, where  $c\in \mathbb R$. We want to find $c$ such that $u_k$ satisfies \eqref{eqref:q23} also for $k= T-1$ and $k=1$.

Let $u_0 =0$. Then, for the recursion relation at $k=1$, we have
\begin{align*}
-\bar K \beta u_1 + \rho(A_T)\beta u_2 
&= -\bar K \beta  \sqrt{\frac{ \hat K }{\rho(A_T)\beta}}(z +c z^{-1}) + \hat K(z^2+cz^{-2})
\\&=\left( - \bar K \beta + \sqrt{\rho(A_T) \beta \hat K} (z+ z^{-1}) \right) \left( \sqrt{\frac{ \hat K }{\rho(A_T)\beta}}(z +c z^{-1})\right)
\end{align*}
by \eqref{eq:q24}. Matching the terms, we get
\(
\hat K \left(z^2+c z^{-2}\right) = \hat K \left(z^2 + 1 + c + c z^{-2}\right);
\)
and hence, $c=-1$ must hold.

For the case $k=T-1$, expanding the relation \eqref{eqref:q23} we obtain
\begin{align*}
&\hat K u_{T-2} + (-\bar K \beta +r)u_{T-1} 
\\&=\hat K  \left(\sqrt{\frac{ \hat K }{\rho(A_T)\beta}}\right)^{T-2}\left(z^{T-1}-\frac 1{z^{T-2}}\right) +(- \bar K \beta+r) \left(\sqrt{\frac{ \hat K }{\rho(A_T)\beta}}\right)^{T-1}\left(z^{T-1}-\frac 1{z^{T-1}}\right)
\\&= \lambda u_{T} = ( -\bar K \beta + \sqrt{ \rho(A_T) \beta \hat K} \left(z + z^{-1}\right) ) \left(\sqrt{\frac{ \hat K }{\rho(A_T)\beta}}\right)^{T-1}\left(z^{T-1}-\frac 1{z^{T-1}}\right)
\\& = -\bar K \beta \left(\sqrt{\frac{ \hat K }{\rho(A_T)\beta}}\right)^{T-1}\left(z^{T-1}-\frac 1{z^{T-1}}\right) + \hat K\left(\sqrt{\frac{ \hat K }{\rho(A_T)\beta}}\right)^{T-2}
\left(z^{T}+z^{T-2}-\frac 1{z^{T}}-\frac 1{z^{T-2}}\right).
\end{align*}
The relationship we have established above can be simplified to the following:
\begin{equation}\label{eq:g25}
z^{T}- z^{-T} = \frac{ r}{\sqrt{\rho(A_T)\beta \hat K}}\left(z^{T-1}-z^{-(T-1)}\right).
\end{equation}
Writing $z = u e^{i \theta '}$, the above equation can be rewritten as a root finding problem for a degree $T$ trigonometric polynomial, which will have $2T$ roots.

We can rewrite \eqref{eq:g25} as
\begin{equation}\label{eq:g26}
p(z) := z^{2T} - \frac{ r }{\sqrt{ \rho(A_T) \beta \hat K}} z^{2T-1} + \frac{ r }{\sqrt{ \rho(A_T) \beta \hat K}} z -1=0.
\end{equation}
Note that $p(1)=p(-1)=0$. Let $p(z) = (z^2-1)q(z)$. Then, since we have
    \[
z^{2T} -1 = (z^2-1)(z^{2T-2}+z^{2T-4}+\cdots+1)
    \]
    and
    \[
z^{2T-1} - z = z(z^2-1)(z^{2T-4}+z^{2T-6}+\cdots+1),
    \]
    it follows that
    \[
q(z) = z^{2(T-1)} - \frac{ r }{\sqrt{ \rho(A_T) \beta \hat K}}z^{2T-3}+z^{2T-4}- \frac{ r }{\sqrt{ \rho(A_T) \beta \hat K}}z^{2T-5}+\cdots -  \frac{ r }{\sqrt{ \rho(A_T) \beta \hat K}} z +1.
    \]
    Therefore, $q(z) = z^{2T-2}q\left( z^{-1}\right),$ i.e., $q$ is a degree $(2T-2)$-palindromic polynomial. Since $q$ is a palindromic polynomial of even degree, for $\omega = z + z^{-1}$, we can write $q(z) = z^{T-1}Q(\omega)$, where $Q$ is a polynomial of degree $T-1$ \cite[Theorem 2.1]{conrad2016roots}. Thus, for every pair $z+z^{-1}$ that is a root of $Q$, we can backtrack the relations above to show that $\lambda = -\bar K \beta + \sqrt{\rho(A_T)\beta \hat K}\left(z + z^{-1}\right)$ is an eigenvalue, and this completes the proof.
\end{proof}
\subsection{Proof of Lemma \ref{lem:h24}}\label{sect:lem:h24}
\begin{proof}[Proof of Lemma \ref{lem:h24}]
If $\frac{r}{\sqrt{ \rho(A_T)\beta \hat K}}= 1,$ then we have $p(z) = (z^{2T-1}+1)(z-1)$, and thus all the roots of $p$ are on the (complex) unit circle.

We will next consider the case $j := \frac{r}{\sqrt{ \rho(A_T)\beta \hat K}} < 1$. Any $z \in \mathbb C$ that satisfies $p(z)=0$ also satisfies $h(z):=\frac{z^{2T-1}(z-j)}{(1-jz)}=1$. Note that as $j<1$, $h$ is a holomorphic function in the region $|z|<1$. Now, for $|z|<1$, we have that
\(
|h(z)| < \left | \frac{z-j}{1-jz}\right|.
\)
Since $j<1$, the map $z \mapsto \frac{z-j}{1-jz}$ is a Möbius transformation that maps the unit circle into itself; thus, $|h(z)|\leq 1$ for all $|z|\leq 1$. This stems from the fact that $\left |\frac{z-j}{1-jz}\right| <1$ for $|z|<1$. As a consequence of the maximum modulus principle \cite[Thrm 10.24]{rudin1987real}, $h(z) = 1$ is only feasible on the unit circle $|z|=1$ for $|z| \leq 1$. 

Now, observe that
\[
\left | \frac{z-j}{1-jz}\right|^2-1 = \frac{|z-j|^2-|1-jz|^2}{|1-jz|^2},
\]
and that
\[
|z-j|^2-|1-jz|^2 = (1-j^2)(|z|^2-1).
\]
Since $j<1$, this implies that for any $|z|>1$, we have $\left | \frac{z-j}{1-jz}\right|>1$. In particular, $|h(z)|>1$ for $|z|>1$. Therefore, by the minimum modulus principle, it follows that for $|z| \ge 1$, $h(z)=1$ is only possible for $|z|=1$.
\end{proof}
\section{Proofs of Section \ref{sect:approx}}\label{app:c}
In this appendix, we will provide the proofs of technical results stated in Section \ref{sect:approx}. These results can be easily translated to the regularized setting; however, since the setting in Section \ref{sect:approx} is more general, we will stick to that case.
\subsection{Proof of Lemma \ref{lem:f6}}\label{sect:lem:f6}
Let 
\[
\mathcal S := \left\{ Q:X \times A: |\min_{a \in A}Q(x,a)-\min_{a \in A}Q(\tilde x,a)| \le \frac{\tilde L_1}{1-\beta \tilde K_1}d_X(x,\tilde x) \, , \|Q\|_{\infty} \le \frac M{1-\beta}\right\}.
\]
Let
\(
H_1(Q,\mu)(x,a) := c(x,a,\mu) + \beta \int_X \min_{b \in A} Q(y,b)p(dy|x,a,\mu).
\)
We claim that $H_1(\cdot,\mu)$ maps $\mathcal S$ into itself. Indeed,
\[
|H_1(Q,\mu)(x,a)| \le \| c(\cdot,\cdot,\cdot)\|_{\infty} + \beta \left| \int_X \min_{b \in A} Q(y,b)p(dy|x,a,\mu)\right| \le M + \beta \frac{M}{1-\beta}=\frac{M}{1-\beta},
\]
so the same uniform bound holds.

To obtain the Lipschitz bound, using the Lipschitz bound of $\min_{b \in A} Q(\cdot,b)$, we obtain
\begin{align*}
    &|\min_{b \in A}H_1(Q,\mu)(x,b)-\min_{b \in A}H_1(Q,\mu)(\tilde x,b)| \le \sup_{b \in A}| H_1(Q,\mu)(x,b)-H_1(Q,\mu)(\tilde x,b)|
    \\& \le \sup_{b \in A}| c(x,b,\mu)-c(\tilde x,b,\mu)|+\beta \sup_{b \in A}\left| \int_X\min_{b \in A} Q(y,b)p(dy|x,a,\mu)-\int_X\min_{b \in A} Q(y,b)p(dy|\tilde x,a,\mu) \right|
    \\& \le \tilde L_1 + \beta \frac{\tilde L_1}{1-\beta \tilde K_1} \le \frac{\tilde L_1}{1-\beta \tilde K_1}.
\end{align*}
Thus, $H_1(\cdot,\mu)$ maps $\mathcal S$ into itself for any $\mu \in \mathcal P(X)$.

Now, suppose that there exists a family of $Q$-functions $(Q^m_n)_n\subset \mathcal S$ and $(\mu^m_n)_n\subset \mathcal P(X)$ such that
\(
H_1(Q^m_{n+1},\mu^m_n)=Q^m_n 
\)
for all $n \ge 0$ and $m\ge0$. Suppose $(\mu^m_n)_n \in \mathcal P(X)^{\infty}$ converges to $(\mu_n)_n$ weakly as $m \to \infty$. Suppose there exists a unique family of $Q$-functions $(Q_n)_n$ such that 
\(
H_1(Q_{n+1},\mu_n)=Q_n,
\)
which we will prove to be the case later on.
We want to show that $\lim_{m \to \infty} \| Q^m_n -Q_n\|_{\infty}=0$. First, we take any subfamily $\{(Q^{m_k}_n)_n: k \in \mathbb N\}$ of the family $\{ (Q^m_n)_n: m \in \mathbb N\}$. Now take a further subfamily of the family $\{(Q^{m_k}_n)_n: k \in \mathbb N\}$, say $\{( Q^{m_{k_l}}_n)_n: l \in \mathbb N\}$ such that $\min_{b \in A} Q^{m_{k_l}}_n(\cdot,b)$ are convergent under the uniform norm, which is possible as the set
\[
\mathcal S_X:=\left\{ u:X \to \mathbb R: |u(x)-u(\tilde x)| \le \frac{\tilde L_1}{1-\beta \tilde K_1}d_X(x,\tilde x) \, , \|u\|_{\infty} \le \frac M{1-\beta}\right\}
\]
is compact by the Arzel\`a-Ascoli theorem. 

By Assumption \ref{assump:2}, we must have
\(
\lim_{l \to \infty} \|c(\cdot,\cdot,\mu^{m_{k_l}}_n)-c(\cdot,\cdot,\mu_n)\|_{\infty} = 0.
\)
Furthermore, note that
\begin{align*}
    &\left|\int_X \min_{b \in A}Q^{m_{k_l}}_n(y,b)p(dy|x,a,\mu^{m_{k_l}}_n) -\int_X \min_{b \in A} Q_n(y,b) p(dy|x,a,\mu_n) \right|
    \\& \le \underbrace{\left|\int_X \min_{b \in A}Q^{m_{k_l}}_n(y,b)p(dy|x,a,\mu^{m_{k_l}}_n) -\int_X \min_{b \in A} Q_n(y,b) p(dy|x,a,\mu^{m_{k_l}}_n) \right|}_{=(a_l)(x,a)}
    \\& \qquad \qquad+\underbrace{\left| \int_X \min_{b \in A} Q_n(y,b) p(dy|x,a,\mu^{m_{k_l}}_n) -\int_X \min_{b \in A} Q_n(y,b) p(dy|x,a,\mu_n)  \right|}_{=(b_l)(x,a)}.
\end{align*}
By definition of the $W_1$-metric, as a consequence of Assumption \ref{assump:2} and \cite[Theorem 3.5]{serfozo1982convergence}, we obtain
\[
\sup_{(x,a) \in X \times A}(a_l)(x,a) \le \sup_{(x,a) \in X\times A}\frac{\tilde L_1}{1-\beta \tilde K_1} W_1(p(\cdot|x,a,\mu^{m_{k_l}}_n),p(\cdot|x,a,\mu_n)) \to 0.
\]
Furthermore, dominated convergence theorem implies that 
\(
\lim_{l \to \infty} \sup_{(x,a) \in X \times A}(b_l)(x,a) =0.
\)
Thus, a straightforward triangle inequality implies that 
\[
\sup_{(x,a) \in X \times A}\left|\int_X \min_{b \in A}Q^{m_{k_l}}_n(y,b)p(dy|x,a,\mu^{m_{k_l}}_n) -\int_X \min_{b \in A} Q_n(y,b) p(dy|x,a,\mu_n) \right| \to 0.
\]
Now, if the initial sequence $(Q^m_n)_{n \in \mathbb N}$ were to be chosen as $(Q^m_n)_{0 \le n \le f(m)}$, where $f(m) \to \infty$ as $m \to \infty$, the same argument above holds. 

To complete the proof, it remains to show that for any given $\pmb \mu \in \mathcal P(X)^{\infty}$ there exists a unique family of $Q$-functions in $\mathcal S$.
First, we recall some definitions. For a given $\pmb \mu \in \{\mu_0\}\times \mathcal P(X)^{\infty}$, we define $T^{\pmb \mu}_t$ acting on functions $u:X \to \mathbb R$ as 
\[
T^{\pmb \mu}_t u(x) = \min_{a \in A} \left(c(x,a,\mu_t) + \beta \int_X u(y)p(dy|x,a,\mu_t)\right).
\]
Let 
\[
\hat{\mathcal S}=\left\{ u:X\to \mathbb R: \| u\|_{\infty} \le \frac{M}{1-\beta}\right\},
\]
where $M$ is the upper bound of $c$, which exists as $X \times A$ is compact due to Assumption \ref{assump:2}. Similar to \cite[Lemma 3.4]{SaBaRaSIAM}, we have the following:
\begin{lemma}\label{lem:app1}
    For any $t\ge 0$, $T^{\pmb \mu}_t$ maps $\hat{\mathcal S}$ into itself. Furthermore, for any $u,v \in \hat{\mathcal S}$, we have
    \[
    \| T^{\pmb \mu}_tu-T^{\pmb \mu}_tv\|_\infty \le \beta \| u-v\|_{\infty}.
    \]
\end{lemma}

Let $\hat{\mathcal S}^{\infty} := \prod_{t=0}^{\infty} \hat{\mathcal S}$. Then, for any $\pmb u \in \hat{\mathcal S}^\infty$, we define a new operator $T^{\pmb \mu}:\hat{\mathcal S}^\infty \to \hat{\mathcal S}^\infty$ as
\(
(T^{\pmb \mu}\pmb u)_t =T^{\pmb \mu}_tu_{t+1}
\)
for $t \ge 0$. Clearly, $T^{\pmb \mu}$ is well defined. Let $\mathfrak q>1$ be such that $\mathfrak q \beta <1$. Then, on $\hat{\mathcal S}^\infty$ we define the metric $d(\pmb u,\pmb v) := \sum_{t=0}^{\infty} \mathfrak q^{-t} \| u_t-v_t\|_{\infty}.$ Lemma \ref{lem:app1} implies the following:
\begin{lemma}
    There exists $c<1$ such that for all $\pmb u,\pmb v \in \hat{\mathcal S}^{\infty}$ we have
    \(
    d(T^{\pmb \mu}\pmb u,T^{\pmb \mu}\pmb v) \le c d(\pmb u,\pmb v).
    \)
\end{lemma}
\begin{proof}
    Directly follows from Lemma \ref{lem:app1}.
\end{proof}
Note that $\hat{\mathcal S}^{\infty}$ is complete under the metric.  Thus, for all $\pmb \mu$, there exists a unique fixed point $\pmb u^{\pmb \mu} \in \hat{\mathcal S}$ as a consequence of Banach fixed-point theorem. Note that $\pmb u_0 \equiv (0,0,\cdots) \in \hat{\mathcal S}^{\infty}.$ We recursively define $\pmb u^{\pmb \mu}_{k+1} = T^{\pmb \mu}\pmb u_k$ for $k\ge0$. Then, Banach fixed-point theorem implies that 
\(
\lim_{k \to \infty}d(\pmb u^{\pmb \mu}_k,\pmb u^{\pmb \mu}_*) =0,
\)
where $T^{\pmb \mu}u^{\pmb \mu}_* = u^{\pmb \mu}_*$. Since $T^{\pmb \mu}$ also maps $\mathcal S_X$ into itself, and the null v ector is in $\mathcal S_X$, it follows that $u^{\pmb \mu}_* \in \mathcal S_X$.

The argument above implies that for any given $\pmb \mu$, there exists a unique family of $Q$-functions, as desired. This completes the proof of Lemma \ref{lem:f6}.

\subsection{Proof of Lemma \ref{lem:1}}\label{sect:lem:1}
\begin{proof}[Proof of Lemma \ref{lem:1}]
For our purposes, only the second half of the proof of \cite[Proposition 3.9]{SaBaRaSIAM} is relevant, and we will closely follow it. Let 
\[
A_n := \{ (x,a) \in X \times A : Q_n(x,a) = \min_a Q_n(x,a) \},
\]
and
\[
A := \{ (x,a) \in X \times A: Q(x,a) = \min_a Q(x,a) \}.
\]
Note that as $Q_n$ are continuous, and $\lim_n Q_n = Q$ happens continuously, it holds that $Q$ is continuous over $X \times A$. Furthermore, by assumption we have $\pi_n \otimes \mu_n(A_n) =1$ for all $n$. Since $Q_n,\min_{a \in A}Q_n(\cdot,a),$ $\min_{a \in A} Q(\cdot,a),$ and $Q$ are continuous, we have that the sets $A_n$ and $A$ are closed. 

For all $n$, since $\pi_{n} \otimes \mu_{n}$ concentrates on optimal state-action pairs of $Q_n$, for all $n$ it holds that
\(
\pi_{n} \otimes \mu_{n} (A_n) = 1.
\)
Note that the continuity of the maps $Q_n$, $\min_{a \in A}Q_n(\cdot,a), Q$, and $\min_{a \in A}Q(\cdot,a)$ gives us that $A_n$ and $A$ are closed sets in $X \times A$. For $c > 0$, we define the open level sets
\[
A(\infty,c) = \left\{ (x,a) : Q_n(x,a) > \min_{a \in A}Q_n(x,a) + c \right\}.
\]

Using the continuity of \( Q_n \) and \( \min_{ a\in A}Q_{n}(\cdot,a) \), we have 
\[
\partial A(\infty, c) \subset (Q_n - \min_{a\in A}Q_{n}(\cdot,a))^{-1}(\{c\}),
\]
where \(\partial V\) denotes the topological boundary of a set \(V\), and \((Q_n - \min_{a\in A}Q_{n}(\cdot,a))^{-1}(\{c\})\) is the preimage of the set \(\{c\}\) under the function \(Q_n - \min_{a\in A}Q_{n}\). Since \(\pi_n \otimes \mu_n\) is a probability measure for all $n$, the pushforward measure 
\(
\rho_n := \pi_n \otimes \mu_n \circ (Q_n - \min_{a \in A}Q_{n}(\cdot,a))^{-1}
\)
is also a Borel probability measure. Therefore, for all \(n\), \(\rho_n\) has at most countably many atoms. If \(\rho_n\) had uncountably many atoms, the sum of uncountably many positive numbers would necessarily diverge, leading to a contradiction since \(\rho_n\) is a finite measure.
Hence, the set 
\[
A(\infty) = \{c>0 : \pi \otimes \mu (\partial A(\infty, c))>0\}
\]
is at most countable. Consequently, there exist uncountably many \(c>0\) such that~~\(\pi_n \otimes \mu_n (\partial A(\infty, c)) = 0.\)

Using the observation above, we can construct a decreasing sequence of positive numbers \((c_n)_{n \in \mathbb N}\) such that \(\lim_{n \to \infty} c_n = 0\) and 
\(
\pi_t \otimes \mu_t (\partial A(\infty, c_n)) = 0,
\)
with \(A(\infty, c_n) \subset A(\infty, c_{n+1})\) for all \(n\). Define 
\[
A^{<}(\infty, k) := (A \cup A(\infty, k))^c.
\]
With these notations, we can decompose the whole \(X \times A\) into disjoint Borel measurable sets as follows:
\[
X \times A = A \cup A(\infty, c_m) \cup (A \cup A(\infty, c_m))^c =: A \cup A(\infty, c_m) \cup A^{<}(\infty, c_m),
\]
for all \(m\). Finally, we note that the sublevel set \(A_t \cup A^{<}(\infty, c_n)\) is closed in \(X \times A\) for all $n$.

 Observing that
\[
1 = \pi_{n} \otimes \mu_{n} (A_n ) = \pi_{n} \otimes \mu_{n}(A \cap A_n) + \pi_{n} \otimes \mu_{n}(A^{<}(\infty,c_m) \cap A_n) + \pi_{n} \otimes \mu_{n} ( A(\infty,c_m) \cap A_n)
\]
for all $n$ and $m$, we obtain
\[
1 \leq \limsup_{n \rightarrow \infty} \left( \pi_{n} \otimes \mu_{n}(A \cap A_n) + \pi_{n} \otimes \mu_{n}A^{<}(\infty,c_m) \cap A_n) + \pi_{n} \otimes \mu_{n} ( A(\infty,c_m) \cap A_n) \right)
\]
for all $m$; thus,
\begin{equation}\label{eqqq6}
1 \leq \liminf_{m \rightarrow \infty} \limsup_{n \rightarrow \infty} \left( \pi_{n} \otimes \mu_{n}((A \cup A^{<}(\infty,c_m)) \cap A_n) + \pi_{n} \otimes \mu_{n} ( A(\infty,c_m) \cap A_n) \right).
\end{equation}

First, we aim to show that 
\(
\limsup_{n \to \infty} \pi_{n} \otimes \mu_{n}(A(\infty, c_m) \cap A_n) = 0.
\)
Observing that 
\[
0 \leq \pi_{n,t} \otimes \mu_{n,t}(A(\infty, c_m) \cap A_n) \leq \pi_{n} \otimes \mu_{n} \left( (\partial A(\infty, c_m) \cup A(\infty, c_m)) \cap A_n \right),
\]
and noting that \((\partial A(\infty, c_m) \cup A(\infty, c_m)) \cap A_n\) is closed, we claim that the indicator function \(1_{(\partial A(\infty, c_m) \cup A(\infty, c_m)) \cap A_n}\) converges continuously to 0.

Let \((x_n, a_n) \in A_n \cap (\partial A(\infty, c_m) \cup A(\infty, c_m))\) for all \(n\), and suppose \((x_n, a_n) \to (x, a) \in X \times A\). Since \(\partial A(\infty, c_m) \cup A(\infty, c_m)\) is closed in \(X \times A\), it follows that \((x, a) \in \partial A(\infty, c_m) \cup A(\infty, c_m)\). Consequently, 
\[
\lim_{n \to \infty} Q_n(x_n, a_n) = Q(x, a) \geq \min_{ a\in A}Q(x,a) + c_m = \lim_{n \to \infty} \min_{a \in A}Q_n(x_n,a) + c_m.
\]
Thus, for any sufficiently large \(n\), we have \(Q_n(x_n, a_n) \not =  \min_{a \in A}Q_n(x_n,a)\), implying that 
\[
\
\lim_{n\rightarrow \infty} 1_{(\partial A(\infty, c_m) \cup A(\infty, c_m)) \cap A_n}(x_n, a_n) = 0.
\]
Then, as \(\pi_n \otimes \mu_n(\partial A(\infty, c_m)) = 0\) for all \(m\), from Portmanteau Theorem \cite[Theorem 2.1.-(iii)]{billingsley2013convergence} it follows that for each $m$, we have
\[
\begin{aligned}
0 &= \lim_{n \to \infty} \pi_n \otimes \mu_n \left( (\partial A(\infty, c_m) \cup A(\infty, c_m)) \cap A_n \right) \\
&= \limsup_{n \to \infty} \pi_{n} \otimes \mu_{n} \left( (\partial A(\infty, c_m) \cup A(\infty, c_m)) \cap A_n \right),
\end{aligned}
\]
where we used the dominated convergence theorem on the first line and \cite[Theorem 3.5]{langen1981convergence} on the second line.
This reduces \eqref{eqqq6} to
\begin{align*}
1 &\leq \liminf_{m \rightarrow \infty} \limsup_{n\rightarrow \infty} \left(\pi_{n} \otimes \mu_{n}((A \cup A^{<}(\infty,c_m))\cap A_n) + \pi_{n} \otimes \mu_{n}(A(\infty,c_m)\cap A_n)\right)
\\&= \liminf_{m \rightarrow \infty} \limsup_{n\rightarrow \infty} \pi_{n} \otimes \mu_{n}((A \cup A^{<}(\infty,c_m))\cap A_n)
\\&\leq \liminf_{m \rightarrow \infty} \limsup_{n\rightarrow \infty} \pi_{n} \otimes \mu_{n}(A \cup A^{<}(\infty,c_m)).
\end{align*}
Since each $A \cup A^{<}(\infty,c_m)$ is closed, using again the Portmanteau Theorem \cite[Theorem 2.1.-(iii)]{billingsley2013convergence}, we have that
\begin{align*}
1 &\leq \liminf_{m \rightarrow \infty} \limsup_{n \rightarrow \infty} \pi_{n} \otimes \mu_{n} \left( A \cup A^{<}(\infty,c_m) \right)  \leq \liminf_{m \rightarrow \infty}\pi \otimes \mu \left( A \cup A^{<}(\infty,c_m) \right) .
\end{align*}
Note that $A \cup A^{<}(\infty,c_m)$ satisfies the property 
\[
\bigcap_{m \in \mathbb{N}} (A \cup A^{<}(\infty,c_m)) = A
\]
as a consequence of the continuity of $Q$ and $\min_{a \in A}Q(\cdot,a)$. 
Together with the monotone convergence theorem, this observation gives us 
\[
1 \leq \liminf_{m \rightarrow \infty}\pi \otimes \mu \left( A \cup A^{<}(\infty,c_m) \right)  = \pi \otimes \mu (A),
\]
which demonstrates $\pi \otimes \mu (A)=1$ as desired.
\end{proof}
\subsection{Proof of Lemma \ref{lem:2}}\label{sect:lem:2}
\begin{proof}
    Let $\lim_t \pi_t \otimes \mu_t = \pi \otimes \mu$ under the weak convergence, which exists by our running assumption.
    Since the $1$-Wasserstein metric over $\mathcal P(X)$ metrizes the weak convergence in our setting, we can $g:X \to \mathbb R$ that are $1$-Lipschitz as test functions to verify this property. Let $g$ be such a function. Since $X$ is compact, it follows that $g$ is bounded. Then, using the Fubini theorem, we obtain
\begin{align*}
\int_X g(x)\mu_{t}(dx) 
&= \int_X \int_{X \times A} g(y)p(dy|x,a,\mu_{t})\pi_{t}\otimes \mu_{t}(da,dx)
\\&=\int_{X \times A}  \int_X g(y)p(dy|x,a,\mu_{t})\pi_{t}\otimes \mu_{t}(da,dx).
\end{align*}
Now, using the triangle inequality, we obtain
\begin{align*}
    &\bigg|\int_{A \times X}  \int_X g(y)p(dy|x,a,\mu_{t})\pi_{t}\otimes  \mu_{t}(da,dx)
     -\int_{A \times X}\int_X g(y) p(y|x,a,\mu)\pi \otimes \mu(da,dx)\bigg|
    \\& \le \underbrace{\bigg|\int_{A \times X}  \int_X g(y)p(dy|x,a,\mu_t)\pi_t\otimes \mu_t(da,dx)-\int_{A \times X}\int_Xg(y) p(y|x,a, 
    \mu)\pi_{t} \otimes \mu_{t}(da,dx)\bigg|}_{(a_t)}
    \\& \quad+\underbrace{\bigg|\int_{X \times A}\int_Xg(y) p(y|x,a, 
    \mu)\pi_{t} \otimes \mu_{t}(da,dx) - \int_{X \times A}\int_Xg(y) p(y|x,a,\mu) \pi \otimes \mu(da,dx) \bigg|}_{(b_t)}.
\end{align*}
By Assumption \ref{assump:2}, for any $(x_n,a_n,\mu_n)_n \subset X \times A \times \mathcal P(X)$ such that 
\(
\lim_{n \to \infty} (x_n,a_n,\mu_n)_n = (y,v,\nu)
\)
in $X \times A \times \mathcal P(X)$, using the fact that $W_1$ metrizes the weak convergence in our case, we obtain that
\[
\lim_{n \to \infty} \left|\int_X g(y) p(dy|x_n,a_n,\mu_n), \int_X g(y)p(dy|y,v,\nu) \right| \le \lim_{n \to \infty}W_1 \left( p(\cdot|x_n,a_n,\mu_n),p(\cdot|y,v,\nu)\right)=0,
\]
i.e., \(\int_X g(y)p(dy|\cdot,\cdot,\cdot)\) is continuously convergent in \(X \times A \times \mathcal P(X)\).

By \cite[Theorem 3.5]{serfozo1982convergence}, it directly follows that 
\(
\lim_{t \to \infty} (a_t)  =0.
\)
Since $\int_X g(y)p(dy|\cdot,\cdot,\mu)$ is continuously convergent in $X \times A \times \mathcal P(X)$, we have that $\int_X g(y)p(dy|\cdot,\cdot,\mu)$ is bounded as $X\times A$ is compact. Thus, as a consequence of the dominated convergence theorem, we obtain
\(
\lim_{t \to \infty} (b_t) =0 
\)
as $(\pi_t\otimes \mu_t)_t$ is weakly convergent to $\pi\otimes \mu$ in $\mathcal P(X\times A)$. Therefore,
\[
\bigg|\int_{A \times X}  \int_X g(y)p(dy|x,a,\mu_{t})\pi_{t}\otimes  \mu_{t}(da,dx)
     -\int_{A \times X}\int_X g(y) p(y|x,a,\mu)\pi \otimes \mu(da,dx)\bigg| \to 0
\]
for any $1$-Lipschitz $g:X \to \mathbb R$.
Since $g$ are test functions for the convergence under the $1$-Wasserstein metric, it follows that
\[
W_1\left( \int_{A \times X}p(\cdot|x,a,\mu_t)\pi_t\otimes \mu_t(da,dx), \int_{A \times X}p(\cdot|x,a,\mu)\pi\otimes \mu(da,dx)\right) \to 0,
\]
as desired.
\end{proof}

\subsection{Proof of Lemma \ref{lem:f7}}\label{sect:f7}
The proof heavily relies on \cite[Lemma A.1]{SaBaRaSIAM} and \cite[Proposition 3.10]{SaBaRaSIAM}, and thus we will recall some technical tools introduced there. Part of the tools are also presented in the proof of Lemma \ref{lem:f6}. Let $\pmb \mu = (\mu_t)_t \in \{\mu_0\}\times \mathcal P(X)^{\infty}$ and $\pmb \mu^n = (\mu_{t}^n)_t \in \{\mu_0\} \times \mathcal P(X)^{\infty}$. Suppose $\pmb \mu^n$ converges to $\pmb \mu$ in $\mathcal P(X)^{\infty}$. In \cite[Proposition 3.10]{SaBaRaSIAM}, it is shown that $Q$-functions $(Q^n_t)_t$ and $(Q_t)_t$ defined as
\[
Q^n_t(x,a) = c(x,a,\mu^n_t) + \beta \int_X \min_{b \in A}Q^n_{t+1}(y,b)p(dy|x,a,\mu^n_t),
\]
and
\[
Q_t(x,a) = c(x,a,\mu_t) + \beta \int_X \min_{b \in A}Q_{t+1}(y,b)p(dy|x,a,\mu_t)
\]
satisfy the convergence
\[
\lim_{n \to \infty} \sup_{x \in X} \left|\min_{b \in A}Q^n_{t}(x,b) -\min_{b \in A}Q_{t}(x,b)\right| =0.
\]
We recall that in the proof of Lemma \ref{lem:f6}, we also demonstrated that for a given family of flows $\pmb \mu$, such $Q$-functions are uniquely determined by the flow $\pmb \mu$; thus, the limits above are not subject to the choice of representation.

When $\pmb \mu^n \in \{\mu^n_0\} \times \mathcal P(X)^{\infty}$, one can check that the same proof as in \cite[Proposition 3.10]{SaBaRaSIAM} holds if $\mu^n_0 \to \mu_0$ weakly in $\mathcal P(X)$. Using this observation, we obtain the following:
\begin{lemma}
    Suppose that Assumption \ref{assump:2} holds. Let $\pmb \mu^n \in \{\mu^n_0\} \times \mathcal P(X)^{\infty}$, $\pmb \mu \in \{\mu_0\} \times \mathcal P(X)^{\infty}$ be such that $\lim_n \pmb \mu^n = \pmb \mu$ in $\mathcal P(X)^{\infty}$. Then, we have
    \[
    \lim_{n \to \infty}\sup_{(x,a)} | Q_t(x,a)-Q^n_t(x,a)| = 0.
    \]
\end{lemma}
\begin{proof}
    As argued above, for all $t \in \mathbb N$, \cite[Proposition 3.10]{SaBaRaSIAM} implies
    \[
\lim_{n \to \infty} \sup_{x \in X} \left|\min_{b \in A}Q^n_{t}(y,b) -\min_{b \in A}Q_{t}(y,b)\right| = 0.
\]
Thus, \cite[Theorem 3.5]{serfozo1982convergence} implies that 
\[
\lim_{n \to \infty} \int_X \min_{b \in A} Q^n_{t+1}(y,b)p(dy|x_n,a_n,\mu^n_t) = \int_X \min_{b \in A} Q_{t+1}(y,b)p(dy|x,a,\mu_t)
\]
whenever $(x_n,a_n)\subset X \times A$ converges to $(x,a)$. Thus, we have
\[
\lim_{n \to \infty} \sup_{ (x,a) \in X \times A} \left|  \int_X \min_{b \in A} Q^n_{t+1}(y,b)p(dy|x,a,\mu^n_t) - \int_X \min_{b \in A} Q_{t+1}(y,b)p(dy|x,a,\mu_t)\right| =0
\]
as $X\times A$ is compact. Furthermore, Assumption \ref{assump:2} also implies that 
\[
\lim_{n \to \infty} | c(x_n,a_n,\mu^n_t) - c(x,a,\mu_t) |=0
\]
whenever $(x_n,a_n) \subset X \times A$ converges to $(x,a)$, meaning that
\[
\lim_{n \to \infty} \sup_{(x,a) \in X \times A} | c(x,a,\mu^n_t) - c(x,a,\mu_t) | = 0.
\]
Thus, as a consequence of a straightforward triangle inequality, now it is easy to see that 
\[
    \lim_{n \to \infty}\sup_{(x,a)} | Q_t(x,a)-Q^n_t(x,a)| = 0
    \]
    holds.
\end{proof}
Lemma \ref{lem:f7} directly follows from the lemma above.
\end{document}